\def\today{March 6, 2013}
\def\@evenfoot{\rule{0pt}{20pt}[\today] \hfill}
\def\@oddfoot{\rule{0pt}{20pt}\hfill [\today]}
\newtheorem{theorem}{Theorem}[subsection]
\newtheorem{proposition}[theorem]{Proposition}
\newtheorem{lemma}[theorem]{Lemma}
\newtheorem*{theoremC}{Theorem~C}
\newtheorem*{propositionD}{Proposition~D}
\theoremstyle{definition}
\newtheorem*{definitionA}{Definition~A}
\newtheorem*{definitionB}{Definition~B}
\newtheorem*{Example}{Example}
\newtheorem{definition}[theorem]{Definition}
\newtheorem*{Remark}{Remark}
\def\KK{{\it KK}}\def\Binftyop{{\EuScript B}_\infty}
\def\otexp#1#2{{#1}^{\ot #2}}\def\Ainftyop{{\EuScript A}_\infty}
\def\Associative{{\EuScript A}{\it ss}}\def\CC{C}
\def\N{{\mathbb N}}
\def\Fuk#1#2{
\vbox to 0pt{\vss
\hbox{\raisebox{.15em}{\mbox {\scriptsize
$\Big( \hskip -.4em
\def\arraystretch{.7}
\begin{array}{c}
#1 \cr #2  
\end{array} \hskip -.4em
\Big)$}}}}
}
\def\id{{1\!\!1}}
\def\me{m}\def\en{n}
\def\FuK(#1,#2){\Fuk#1#2}
\def\FrFuK(#1,#2){\Fr\Fuk#1#2}
\def\baloon#1{
{
\unitlength=.35pt
\begin{picture}(28.00,54.00)(0.00,0.00)
\thicklines
\put(14.00,14.00){\makebox(0.00,0.00){\scriptsize $#1$}}
\put(14.00,14.00){\circle{28.00}}
\thinlines
\put(14.00,28.00){\vector(0,1){30.00}}
\end{picture}}
}
\def\ball#1{
{
\unitlength=.35pt
\begin{picture}(28.00,54.00)(0.00,-49)
\thicklines
\put(14.00,-14.00){\makebox(0.00,0.00){\scriptsize $#1$}}
\put(14.00,-14.00){\circle{28.00}}
\thinlines
\put(14.00,-28.00){\vector(0,-1){30.00}}
\end{picture}}
}
\def\slp#1#2{
\ \begin{array}{|c|}
\vphantom{3}#1\cr\vphantom{3}#2    
\end{array}\ 
}
\def\EP{{\EuScript P}}\def\J{{\EuScript J}}
\def\SP{{\EuScript {SP}}}\def\EK{{\EuScript K}}
\def\ol#1{\overline{#1}}\def\Dec{{\it Dec}}
\def\lT{{\it lT}}\def\pT{{\it p\hskip -.1em T \hskip -.1em}}
\def\zT{{\it zT}}\def\bT{{\it d \hskip -.1emT}}
\def\free{{\sf F}}\def\vert{{\it Vert}}
\def\Tonks{{\it Ton}}
\def\epi{\twoheadrightarrow}
\def\po{partial order}
\def\Fr{\free(\Xi)}\def\rada#1#2{#1,\ldots,#2}
\def\except{\hskip .2em\raisebox{-.16em}{\rule{.8pt}{1em}}  \hskip .2em}
\def\sfP{{\sf P}}\def\Rada#1#2#3{#1_{#2},\dots,#1_{#3}}\def\ot{\otimes}
\def\raiseB#1{\raisebox{.5em}{$#1$}}
\def\lboxtimes{\raisebox{-.1em}{$\boxtimes$}}
\def\JEDNA{
{
\unitlength=0.015em
\begin{picture}(190.00,100.00)(0.00,0.00)
\put(0.00,20.00){\line(1,0){190.00}}
\put(0.00,40.00){\line(1,0){190.00}}
\put(0.00,60.00){\line(1,0){190.00}}
\put(0.00,80.00){\line(1,0){190.00}}
\thicklines
\qbezier(50.00,40.00)(70.00,0.00)(70.00,0.00)
\qbezier(160.00,80.00)(170.00,90.00)(180.00,100.00)
\qbezier(160.00,80.00)(150.00,90.00)(140.00,100.00)
\put(160.00,0.00){\line(0,1){80.00}}
\put(70.00,60.00){\line(0,1){40.00}}
\qbezier(110.00,20.00)(100.00,10.00)(90.00,0.00)
\put(70.00,60.00){\line(1,-1){60.00}}
\put(10.00,0.00){\line(1,1){60.00}}
\end{picture}}
}
\def\DVA{
{
\unitlength=0.015em
\begin{picture}(190.00,100.00)(0.00,0.00)
\put(0.00,20.00){\line(1,0){190.00}}
\put(0.00,40.00){\line(1,0){190.00}}
\put(0.00,60.00){\line(1,0){190.00}}
\put(0.00,80.00){\line(1,0){190.00}}
\thicklines
\qbezier(90.00,40.00)(80.00,20.00)(70.00,0.00)
\qbezier(30.00,20.00)(40.00,10.00)(50.00,0.00)
\qbezier(160.00,80.00)(170.00,90.00)(180.00,100.00)
\qbezier(160.00,80.00)(150.00,90.00)(140.00,100.00)
\put(160.00,0.00){\line(0,1){80.00}}
\put(70.00,60.00){\line(0,1){40.00}}
\put(70.00,60.00){\line(1,-1){60.00}}
\put(10.00,0.00){\line(1,1){60.00}}
\end{picture}}
}
\def\TRI{
{
\unitlength=0.015em
\begin{picture}(230.00,100.00)(0.00,0.00)
\put(0.00,20.00){\line(1,0){230.00}}
\put(0.00,40.00){\line(1,0){230.00}}
\put(0.00,60.00){\line(1,0){230.00}}
\put(0.00,80.00){\line(1,0){230.00}}
\thicklines
\qbezier(200.00,60.00)(210.00,80.00)(220.00,100.00)
\qbezier(200.00,60.00)(190.00,80.00)(180.00,100.00)
\qbezier(150.00,20.00)(140.00,10.00)(130.00,0.00)
\qbezier(50.00,40.00)(70.00,20.00)(90.00,0.00)
\put(200.00,0.00){\line(0,1){60.00}}
\put(90.00,100.00){\line(0,-1){20.00}}
\put(90.00,80.00){\line(1,-1){80.00}}
\put(10.00,0.00){\line(1,1){80.00}}
\end{picture}}
}
\def\CTYRI{
{
\unitlength=0.015em
\begin{picture}(230.00,100.00)(0.00,0.00)
\put(0.00,20.00){\line(1,0){230.00}}
\put(0.00,40.00){\line(1,0){230.00}}
\put(0.00,60.00){\line(1,0){230.00}}
\put(0.00,80.00){\line(1,0){230.00}}
\thicklines
\qbezier(130.00,40.00)(110.00,20.00)(90.00,0.00)
\qbezier(30.00,20.00)(40.00,10.00)(50.00,0.00)
\qbezier(200.00,60.00)(210.00,80.00)(220.00,100.00)
\qbezier(200.00,60.00)(190.00,80.00)(180.00,100.00)
\put(200.00,0.00){\line(0,1){60.00}}
\put(90.00,100.00){\line(0,-1){20.00}}
\put(90.00,80.00){\line(1,-1){80.00}}
\put(10.00,0.00){\line(1,1){80.00}}
\end{picture}}
}
\def\PET{
{
\unitlength=.017em
\begin{picture}(150.00,80.00)(0.00,0.00)
\put(0.00,60.00){\line(1,0){150.00}}
\put(0.00,40.00){\line(1,0){150.00}}
\put(0.00,20.00){\line(1,0){150.00}}
\thicklines
\qbezier(120.00,60.00)(130.00,70.00)(140.00,80.00)
\qbezier(100.00,80.00)(110.00,70.00)(120.00,60.00)
\put(120.00,0.00){\line(0,1){60.00}}
\put(50.00,80.00){\line(0,-1){40.00}}
\qbezier(30.00,20.00)(40.00,0.00)(40.00,0.00)
\qbezier(70.00,20.00)(60.00,0.00)(60.00,0.00)
\qbezier(50.00,40.00)(70.00,20.00)(90.00,0.00)
\qbezier(10.00,0.00)(30.00,20.00)(50.00,40.00)
\end{picture}}
}
\def\SEST{
{
\unitlength=0.017em
\begin{picture}(190.00,80.00)(0.00,0.00)
\put(0.00,20.00){\line(1,0){190.00}}
\put(0.00,40.00){\line(1,0){190.00}}
\put(0.00,60.00){\line(1,0){190.00}}
\thicklines
\qbezier(160.00,40.00)(170.00,60.00)(180.00,80.00)
\qbezier(160.00,40.00)(150.00,60.00)(140.00,80.00)
\qbezier(110.00,20.00)(100.00,10.00)(90.00,0.00)
\qbezier(30.00,20.00)(40.00,10.00)(50.00,0.00)
\put(160.00,40.00){\line(0,-1){40.00}}
\put(70.00,80.00){\line(0,-1){20.00}}
\put(70.00,60.00){\line(1,-1){60.00}}
\put(10.00,0.00){\line(1,1){60.00}}
\end{picture}}
}
\def\SEDM{
{
\unitlength=.017em
\begin{picture}(190.00,80.00)(0.00,0.00)
\put(0.00,20.00){\line(1,0){190.00}}
\put(0.00,40.00){\line(1,0){190.00}}
\put(0.00,60.00){\line(1,0){190.00}}
\thicklines
\qbezier(160.00,60.00)(170.00,70.00)(180.00,80.00)
\qbezier(140.00,80.00)(150.00,70.00)(160.00,60.00)
\qbezier(110.00,20.00)(100.00,10.00)(90.00,0.00)
\qbezier(50.00,40.00)(60.00,20.00)(70.00,0.00)
\put(160.00,0.00){\line(0,1){60.00}}
\put(70.00,60.00){\line(0,1){20.00}}
\put(70.00,60.00){\line(1,-1){60.00}}
\put(10.00,0.00){\line(1,1){60.00}}
\end{picture}}
}
\def\OSM{
{
\unitlength=0.017em
\begin{picture}(190.00,80.00)(0.00,0.00)
\put(0.00,20.00){\line(1,0){190.00}}
\put(0.00,40.00){\line(1,0){190.00}}
\put(0.00,60.00){\line(1,0){190.00}}
\thicklines
\qbezier(90.00,40.00)(80.00,20.00)(70.00,0.00)
\qbezier(30.00,20.00)(40.00,10.00)(50.00,0.00)
\qbezier(160.00,60.00)(170.00,70.00)(180.00,80.00)
\qbezier(140.00,80.00)(150.00,70.00)(160.00,60.00)
\put(160.00,0.00){\line(0,1){60.00}}
\put(70.00,60.00){\line(0,1){20.00}}
\put(70.00,60.00){\line(1,-1){60.00}}
\put(10.00,0.00){\line(1,1){60.00}}
\end{picture}}
}
\def\DEVET{
{
\unitlength=0.02em
\begin{picture}(150.00,60.00)(0.00,0.00)
\put(0.00,20.00){\line(1,0){150.00}}
\put(0.00,40.00){\line(1,0){150.00}}
\thicklines
\put(120.00,40.00){\line(0,-1){40.00}}
\put(50.00,60.00){\line(0,-1){20.00}}
\qbezier(120.00,40.00)(130.00,50.00)(140.00,60.00)
\qbezier(120.00,40.00)(110.00,50.00)(100.00,60.00)
\qbezier(60.00,0.00)(60.00,0.00)(70.00,20.00)
\qbezier(30.00,20.00)(40.00,0.00)(40.00,0.00)
\qbezier(50.00,40.00)(70.00,20.00)(90.00,0.00)
\qbezier(50.00,40.00)(30.00,20.00)(10.00,0.00)
\end{picture}}
}
\def\DESET
\def\JEDENACT
\def\DVANACT
\def\dvojiteypsilon{{
\unitlength=.3pt
\begin{picture}(24.00,30.00)(0.00,3.00)
\put(10.00,20.00){\line(0,-1){10.00}}
\bezier{20}(10.00,10.00)(15,5)(20.00,0.00)
\bezier{20}(10.00,10.00)(5,5)(0.00,0.00)
\bezier{20}(10.00,20.00)(15,25)(20.00,30.00)
\bezier{20}(0.00,30.00)(5,25)(10.00,20.00)
\end{picture}}}
\def\zeroatwo#1{%
{
\unitlength=.5pt
\begin{picture}(36.00,30.00)(-3.00,10.00)
\put(10.00,20.00){\makebox(0.00,0.00)[l]{$#1$}}
\put(20.00,10.00){\line(0,-1){5.00}}
\put(10.00,10.00){\line(0,-1){5.00}}
\put(30.00,30.00){\line(-1,0){30.00}}
\put(30.00,10.00){\line(0,1){20.00}}
\put(0.00,10.00){\line(1,0){30.00}}
\put(0.00,30.00){\line(0,-1){20.00}}
\end{picture}}
}
\def\twoazero#1{%
{
\unitlength=.5pt
\begin{picture}(36.00,30.00)(-3.00,0.00)
\put(20.00,25.00){\line(0,-1){5.00}}
\put(10.00,25.00){\line(0,-1){5.00}}
\put(10.00,10.00){\makebox(0.00,0.00)[l]{$#1$}}
\put(30.00,20.00){\line(-1,0){30.00}}
\put(30.00,0.00){\line(0,1){20.00}}
\put(0.00,0.00){\line(1,0){30.00}}
\put(0.00,20.00){\line(0,-1){20.00}}
\end{picture}}
}
\def\zeroathree#1{
{
\unitlength=.5pt
\begin{picture}(48.00,30.00)(-4.00,10.00)
\put(20.00,20.00){\makebox(0.00,0.00){$#1$}}
\put(0.00,30.00){\line(0,-1){20.00}}
\put(40.00,30.00){\line(-1,0){40.00}}
\put(40.00,10.00){\line(0,1){20.00}}
\put(0.00,10.00){\line(1,0){40.00}}
\put(30.00,10.00){\line(0,-1){5.00}}
\put(20.00,10.00){\line(0,-1){5.00}}
\put(10.00,10.00){\line(0,-1){5.00}}
\end{picture}}
}
\def\jednadva{{
\unitlength=.4pt
\begin{picture}(24.00,20.00)(-2.00,0.00)
\bezier{20}(10.00,10.00)(15.00,5.00)(20.00,0.00)
\bezier{20}(10.00,10.00)(5.00,5.00)(0.00,0.00)
\put(10.00,20.00){\line(0,-1){10.00}}
\end{picture}}
}
\def\fjednadva{{
\unitlength=.4pt
\begin{picture}(24.00,20.00)(-2.00,0.00)
\thicklines
\bezier{20}(10.00,10.00)(15.00,5.00)(20.00,0.00)
\bezier{20}(10.00,10.00)(5.00,5.00)(0.00,0.00)
\put(10.00,20.00){\line(0,-1){10.00}}
\end{picture}}
}
\def\jednactyri{{
\unitlength=.05pt
\begin{picture}(176.00,160.00)(-8.00,0.00)
\put(80.00,100.00){\line(0,1){60.00}}
\bezier{20}(80.00,80.00)(100.00,30.00)(110.00,0.00)
\bezier{20}(80.00,80.00)(60.00,30.00)(50.00,0.00)
\bezier{20}(80.00,80.00)(120.00,40.00)(160.00,0.00)
\bezier{20}(80.00,80.00)(40.00,40.00)(0.00,0.00)
\put(80.00,100.00){\line(0,-1){20.00}}
\end{picture}}
}
\def\ctyrijedna{{
\unitlength=.05pt
\begin{picture}(176.00,160.00)(-8.00,-160.00)
\put(80.00,-100.00){\line(0,-1){60.00}}
\bezier{20}(80.00,-80.00)(100.00,-30.00)(110.00,0.00)
\bezier{20}(80.00,-80.00)(60.00,-30.00)(50.00,0.00)
\bezier{20}(80.00,-80.00)(120.00,-40.00)(160.00,0.00)
\bezier{20}(80.00,-80.00)(40.00,-40.00)(0.00,0.00)
\put(80.00,-80.00){\line(0,-1){40.00}}
\end{picture}}
}
\def\dvajedna{{
\unitlength=.4pt
\begin{picture}(24.00,20.00)(-2.00,0.00)
\put(10.00,10.00){\line(0,-1){10.00}}
\bezier{20}(10.00,10.00)(15.00,15.00)(20.00,20.00)
\bezier{20}(0.00,20.00)(5.00,15.00)(10.00,10.00)
\end{picture}}
}
\def\fdvajedna{{
\unitlength=.4pt
\begin{picture}(24.00,20.00)(-2.00,0.00)
\thicklines
\put(10.00,10.00){\line(0,-1){10.00}}
\bezier{20}(10.00,10.00)(15.00,15.00)(20.00,20.00)
\bezier{20}(0.00,20.00)(5.00,15.00)(10.00,10.00)
\end{picture}}
}
\def\dvadva{{
\unitlength=.8pt
\begin{picture}(12.00,10.00)(-1.00,0.00)
\bezier{30}(0.00,0.00)(5.00,5.00)(10.00,10.00)
\bezier{30}(0.00,10.00)(5.00,5.00)(10.00,0.00)
\end{picture}}
}
\def\jednatri{{
\unitlength=.4pt
\begin{picture}(24.00,20.00)(-2.00,0.00)
\bezier{20}(10.00,10.00)(15.00,5.00)(20.00,0.00)
\bezier{20}(10.00,10.00)(5.00,5.00)(0.00,0.00)
\put(10.00,20.00){\line(0,-1){20.00}}
\end{picture}}
}
\def\trijedna{{
\unitlength=.4pt
\begin{picture}(24.00,20.00)(-2.00,-20.00)
\bezier{20}(10.00,-10.00)(15.00,-5.00)(20.00,0.00)
\bezier{20}(10.00,-10.00)(5.00,-5.00)(0.00,0.00)
\put(10.00,-20.00){\line(0,1){20.00}}
\end{picture}}
}
\def\dvatri{{
\unitlength=0.4pt
\begin{picture}(24.00,20.00)(-2.00,0.00)
\put(10.00,10.00){\line(0,-1){10.00}}
\bezier{30}(0.00,0.00)(10.00,10.00)(20.00,20.00)
\bezier{30}(0.00,20.00)(10.00,10.00)(20.00,0.00)
\end{picture}}
}
\def\tridva{{
\unitlength=.4pt
\begin{picture}(24.00,20.00)(-2.00,-20.00)
\put(10.00,-10.00){\line(0,1){10.00}}
\bezier{30}(0.00,0.00)(10.00,-10.00)(20.00,-20.00)
\bezier{30}(0.00,-20.00)(10.00,-10.00)(20.00,0.00)
\end{picture}}
}
\def\dvacarkatri{{
\unitlength=.2pt
\begin{picture}(40.00,50.00)(0.00,0.00)
\put(20.00,30.00){\line(0,-1){10.00}}
\put(20.00,0.00){\line(0,1){20}}
\bezier{20}(20.00,20.00)(30.00,10.00)(40.00,0.00)
\bezier{20}(20.00,20.00)(10.00,10.00)(0.00,0.00)
\bezier{20}(20.00,30.00)(30.00,40.00)(40.00,50.00)
\bezier{20}(0.00,50.00)(10.00,40.00)(20.00,30.00)
\end{picture}}
}
\def\gen#1#2{
\if #11
    \if #22 \jednadva \else \fi
\else
\fi
\if #12
    \if #22 \dvadva \else \fi
\else
\fi
\if #12
    \if #21 \dvajedna \else \fi
\else
\fi
\if #13
    \if #22 \tridva \else \fi
\else
\fi
\if #13
    \if #21 \trijedna \else \fi
\else
\fi
\if #12
    \if #23 \dvatri \else \fi
\else
\fi
\if #11
    \if #23 \jednatri \else \fi
\else
\fi
\if #11
    \if #24 \jednactyri \else \fi
\fi
\if #14
    \if #21 \ctyrijedna \else \fi
\fi
}
\def\fgen#1#2{
\if #11
    \if #22 \fjednadva \else \fi
\else
\fi
\if #12
    \if #22 \dvadva \else \fi
\else
\fi
\if #12
    \if #21 \fdvajedna \else \fi
\else
\fi
\if #13
    \if #22 \tridva \else \fi
\else
\fi
\if #13
    \if #21 \trijedna \else \fi
\else
\fi
\if #12
    \if #23 \dvatri \else \fi
\else
\fi
\if #11
    \if #23 \jednatri \else \fi
\else
\fi
\if #11
    \if #24 \jednactyri \else \fi
\fi
\if #14
    \if #21 \ctyrijedna \else \fi
\fi
}
\def\bZbbZ{
{
\unitlength=.27pt
\begin{picture}(48.00,30.00)(-4,0.00)
\bezier{34}(20.00,20.00)(30.00,10.00)(40.00,0.00)
\bezier{34}(20.00,20.00)(10.00,10.00)(0.00,0.00)
\bezier{20}(30.00,10.00)(25.00,5.00)(20.00,0.00)
\put(20.00,30.00){\line(0,-1){10.00}}
\end{picture}} 
}
\def\ZbbZb{{
\unitlength=.27pt
\begin{picture}(48.00,30.00)(-4,0.00)
\bezier{34}(20.00,20.00)(30.00,10.00)(40.00,0.00)
\bezier{34}(20.00,20.00)(10.00,10.00)(0.00,0.00)
\bezier{20}(10.00,10.00)(15.00,5.00)(20.00,0.00)
\put(20.00,30.00){\line(0,-1){10.00}}
\end{picture}}}
\def\dvabZbbZ{{
\unitlength=.2pt
\begin{picture}(48.00,40.00)(-4.00,-40.00)
\bezier{10}(30.00,-30.00)(25.00,-35.00)(20.00,-40.00)
\bezier{34}(0.00,0.00)(20.00,-20.00)(40.00,-40.00)
\bezier{34}(0.00,-40.00)(20.00,-20.00)(40.00,0.00)
\end{picture}}}
\def\dvaZbbZb{{
\unitlength=.2pt
\begin{picture}(48.00,40.00)(-4.00,-40.00)
\bezier{10}(10.00,-30.00)(15.00,-35.00)(20.00,-40.00)
\bezier{34}(0.00,0.00)(20.00,-20.00)(40.00,-40.00)
\bezier{34}(0.00,-40.00)(20.00,-20.00)(40.00,0.00)
\end{picture}}}
\def\asci{{
\unitlength=0.025em
\begin{picture}(110.00,60.00)(0.00,0.00)
\put(0.00,20.00){\line(1,0){110.00}}
\put(0.00,40.00){\line(1,0){110.00}}
\thicklines
\put(80.00,40.00){\line(0,-1){40.00}}
\put(30.00,60.00){\line(0,-1){60.00}}
\qbezier(80.00,40.00)(90.00,50.00)(100.00,60.00)
\qbezier(80.00,40.00)(70.00,50.00)(60.00,60.00)
\qbezier(30.00,20.00)(40.00,10.00)(50.00,0.00)
\qbezier(10.00,0.00)(20.00,10.00)(30.00,20.00)
\end{picture}}
}
\def\ascicut{{
\unitlength=0.025em
\begin{picture}(60.00,60.00)(0.00,0.00)
\put(0.00,40.00){\line(1,0){60}}
\thicklines
\put(30.00,60.00){\line(0,-1){60.00}}
\qbezier(30.00,20.00)(40.00,10.00)(50.00,0.00)
\qbezier(10.00,0.00)(20.00,10.00)(30.00,20.00)
\end{picture}}
}
\def\ascicutmod{{
\unitlength=0.025em
\begin{picture}(60.00,40.00)(0.00,-20.00)
\put(0.00,20.00){\line(1,0){60}}
\thicklines
\put(30.00,40.00){\line(0,-1){40.00}}
\qbezier(30.00,20.00)(40.00,10.00)(50.00,0.00)
\qbezier(10.00,0.00)(20.00,10.00)(30.00,20.00)
\end{picture}}
}
\def\asc{{
\unitlength=0.025em
\begin{picture}(150.00,60.00)(0.00,0.00)
\put(150.00,20.00){\line(-1,0){150.00}}
\put(0.00,40.00){\line(1,0){150.00}}
\thicklines
\put(120.00,40.00){\line(0,-1){40.00}}
\put(50.00,60.00){\line(0,-1){20.00}}
\qbezier(120.00,40.00)(130.00,50.00)(140.00,60.00)
\qbezier(100.00,60.00)(110.00,50.00)(120.00,40.00)
\qbezier(30.00,20.00)(40.00,10.00)(50.00,0.00)
\qbezier(50.00,40.00)(70.00,20.00)(90.00,0.00)
\qbezier(50.00,40.00)(30.00,20.00)(10.00,0.00)
\end{picture}}
}
\def\ascmodq{{
\unitlength=0.025em
\begin{picture}(150.00,60.00)(0.00,0.00)
\put(150.00,20.00){\line(-1,0){150.00}}
\put(0.00,40.00){\line(1,0){150.00}}
\thicklines
\put(120.00,40.00){\line(0,-1){40.00}}
\put(50.00,60.00){\line(0,-1){20.00}}
\qbezier(120.00,40.00)(130.00,50.00)(140.00,60.00)
\qbezier(100.00,60.00)(110.00,50.00)(120.00,40.00)
\qbezier(70.00,20.00)(60.00,10.00)(50.00,0.00)
\qbezier(50.00,40.00)(70.00,20.00)(90.00,0.00)
\qbezier(50.00,40.00)(30.00,20.00)(10.00,0.00)
\end{picture}}
}
\def\xyt{{
\unitlength=0.025em
\begin{picture}(110.00,60.00)(0.00,0.00)
\put(110.00,20.00){\line(-1,0){110.00}}
\put(0.00,40.00){\line(1,0){110.00}}
\thicklines
\put(30.00,60.00){\line(0,-1){40.00}}
\put(80.00,60.00){\line(0,-1){60.00}}
\qbezier(80.00,40.00)(90.00,50.00)(100.00,60.00)
\qbezier(60.00,60.00)(70.00,50.00)(80.00,40.00)
\qbezier(30.00,20.00)(40.00,10.00)(50.00,0.00)
\qbezier(10.00,0.00)(20.00,10.00)(30.00,20.00)
\end{picture}}
}
\def\tttt{{
\unitlength=0.025em
\begin{picture}(150.00,60.00)(0.00,0.00)
\put(150.00,20.00){\line(-1,0){150.00}}
\put(0.00,40.00){\line(1,0){150.00}}
\thicklines
\put(100.00,20.00){\line(0,-1){20.00}}
\put(30.00,60.00){\line(0,-1){40.00}}
\qbezier(80.00,40.00)(90.00,50.00)(100.00,60.00)
\qbezier(100.00,20.00)(120.00,40.00)(140.00,60.00)
\qbezier(60.00,60.00)(80.00,40.00)(100.00,20.00)
\qbezier(30.00,20.00)(40.00,10.00)(50.00,0.00)
\qbezier(10.00,0.00)(20.00,10.00)(30.00,20.00)
\end{picture}}
}
\def\ttttmod{{
\unitlength=0.025em
\begin{picture}(150.00,60.00)(0.00,0.00)
\put(150.00,20.00){\line(-1,0){150.00}}
\put(0.00,40.00){\line(1,0){150.00}}
\thicklines
\put(100.00,20.00){\line(0,-1){20.00}}
\put(30.00,60.00){\line(0,-1){40.00}}
\qbezier(120.00,40.00)(110.00,50.00)(100.00,60.00)
\qbezier(100.00,20.00)(120.00,40.00)(140.00,60.00)
\qbezier(60.00,60.00)(80.00,40.00)(100.00,20.00)
\qbezier(30.00,20.00)(40.00,10.00)(50.00,0.00)
\qbezier(10.00,0.00)(20.00,10.00)(30.00,20.00)
\end{picture}}
}
\def\uuuua{{
\unitlength=.02em
\begin{picture}(230.00,80.00)(0.00,0.00)
\put(0.00,60.00){\line(1,0){230.00}}
\put(0.00,40.00){\line(1,0){230.00}}
\put(0.00,20.00){\line(1,0){230.00}}
\thicklines
\put(180.00,40.00){\line(0,-1){40.00}}
\qbezier(180.00,40.00)(200.00,60.00)(220.00,80.00)
\qbezier(140.00,80.00)(160.00,60.00)(180.00,40.00)
\qbezier(30.00,20.00)(40.00,10.00)(50.00,0.00)
\put(70.00,60.00){\line(1,-1){60.00}}
\put(70.00,60.00){\line(0,1){20.00}}
\put(10.00,0.00){\line(1,1){60.00}}
\end{picture}}
}
\def\uuuuacut{{
\unitlength=.02em
\begin{picture}(150.00,80.00)(0.00,0.00)
\put(0.00,40.00){\line(1,0){140}}
\thicklines
\qbezier(30.00,20.00)(40.00,10.00)(50.00,0.00)
\put(70.00,60.00){\line(1,-1){60.00}}
\put(70.00,60.00){\line(0,1){20.00}}
\put(10.00,0.00){\line(1,1){60.00}}
\end{picture}}
}
\def\uuuuacutmod{{
\unitlength=.02em
\begin{picture}(150.00,80.00)(0.00,0.00)
\put(0.00,40.00){\line(1,0){140}}
\thicklines
\qbezier(102.00,22.00)(92.00,12.00)(82.00,2.00)
\put(70.00,60.00){\line(1,-1){60.00}}
\put(70.00,60.00){\line(0,1){20.00}}
\put(10.00,0.00){\line(1,1){60.00}}
\end{picture}}
}
\def\uuuuamod{{
\unitlength=.02em
\begin{picture}(230.00,80.00)(0.00,0.00)
\put(0.00,60.00){\line(1,0){230.00}}
\put(0.00,40.00){\line(1,0){230.00}}
\put(0.00,20.00){\line(1,0){230.00}}
\thicklines
\put(180.00,40.00){\line(0,-1){40.00}}
\qbezier(180.00,40.00)(200.00,60.00)(220.00,80.00)
\qbezier(140.00,80.00)(160.00,60.00)(180.00,40.00)
\qbezier(100.00,20.00)(90.00,10.00)(80.00,0.00)
\put(70.00,60.00){\line(1,-1){60.00}}
\put(70.00,60.00){\line(0,1){20.00}}
\put(10.00,0.00){\line(1,1){60.00}}
\end{picture}}
}
\def\iuert{{
\unitlength=0.02em
\begin{picture}(230.00,80.00)(0.00,0.00)
\put(0.00,60.00){\line(1,0){230.00}}
\put(0.00,40.00){\line(1,0){230.00}}
\put(0.00,20.00){\line(1,0){230.00}}
\thicklines
\qbezier(120.00,60.00)(130.00,70.00)(140.00,80.00)
\put(50.00,40.00){\line(0,1){40.00}}
\put(160.00,20.00){\line(0,-1){20.00}}
\put(160.00,20.00){\line(1,1){60.00}}
\put(100.00,80.00){\line(1,-1){60.00}}
\qbezier(50.00,40.00)(70.00,20.00)(90.00,0.00)
\qbezier(10.00,0.00)(30.00,20.00)(50.00,40.00)
\end{picture}}
}
\def\iuertmod{{
\unitlength=0.02em
\begin{picture}(230.00,80.00)(0.00,0.00)
\put(0.00,60.00){\line(1,0){230.00}}
\put(0.00,40.00){\line(1,0){230.00}}
\put(0.00,20.00){\line(1,0){230.00}}
\thicklines
\qbezier(190.00,60.00)(180.00,70.00)(170.00,80.00)
\put(50.00,40.00){\line(0,1){40.00}}
\put(160.00,20.00){\line(0,-1){20.00}}
\put(160.00,20.00){\line(1,1){60.00}}
\put(100.00,80.00){\line(1,-1){60.00}}
\qbezier(50.00,40.00)(70.00,20.00)(90.00,0.00)
\qbezier(10.00,0.00)(30.00,20.00)(50.00,40.00)
\end{picture}}
}
\def\uuuinvtot{{
\unitlength=0.02em
\begin{picture}(150.00,80.00)(-150,-80.00)
\put(0.00,-20.00){\line(-1,0){150.00}}
\put(0.00,-40.00){\line(-1,0){150.00}}
\put(0.00,-60.00){\line(-1,0){150.00}}
\thicklines
\put(-50.00,-80.00){\line(0,1){40.00}}
\put(-120.00,-60.00){\line(0,1){60.00}}
\qbezier(-120.00,-60.00)(-130.00,-70.00)(-140.00,-80.00)
\qbezier(-120.00,-60.00)(-110.00,-70.00)(-100.00,-80.00)
\qbezier(-50.00,-40.00)(-70.00,-20.00)(-90.00,0.00)
\qbezier(-30.00,-20.00)(-40.00,-10.00)(-50.00,0.00)
\qbezier(-10.00,0.00)(-30.00,-20.00)(-50.00,-40.00)
\end{picture}}
}
\def\uuuinvtotmod{{
\unitlength=0.02em
\begin{picture}(150.00,80.00)(-150,-80.00)
\put(0.00,-20.00){\line(-1,0){150.00}}
\put(0.00,-40.00){\line(-1,0){150.00}}
\put(0.00,-60.00){\line(-1,0){150.00}}
\thicklines
\put(-50.00,-80.00){\line(0,1){40.00}}
\put(-120.00,-60.00){\line(0,1){60.00}}
\qbezier(-120.00,-60.00)(-130.00,-70.00)(-140.00,-80.00)
\qbezier(-120.00,-60.00)(-110.00,-70.00)(-100.00,-80.00)
\qbezier(-50.00,-40.00)(-70.00,-20.00)(-90.00,0.00)
\qbezier(-70.00,-20.00)(-60.00,-10.00)(-50.00,0.00)
\qbezier(-10.00,0.00)(-30.00,-20.00)(-50.00,-40.00)
\end{picture}}
}
\def\uuuu{{
\unitlength=0.02em
\begin{picture}(150.00,80.00)(0.00,0.00)
\put(0.00,20.00){\line(1,0){150.00}}
\put(0.00,40.00){\line(1,0){150.00}}
\put(0.00,60.00){\line(1,0){150.00}}
\thicklines
\put(50.00,80.00){\line(0,-1){40.00}}
\put(120.00,60.00){\line(0,-1){60.00}}
\qbezier(120.00,60.00)(130.00,70.00)(140.00,80.00)
\qbezier(120.00,60.00)(110.00,70.00)(100.00,80.00)
\qbezier(50.00,40.00)(70.00,20.00)(90.00,0.00)
\qbezier(30.00,20.00)(40.00,10.00)(50.00,0.00)
\qbezier(10.00,0.00)(30.00,20.00)(50.00,40.00)
\end{picture}}
}
\def\uuuucut{{
\unitlength=0.02em
\begin{picture}(100.00,80.00)(0.00,0.00)
\put(0.00,60.00){\line(1,0){100.00}}
\thicklines
\put(50.00,80.00){\line(0,-1){40.00}}
\qbezier(50.00,40.00)(70.00,20.00)(90.00,0.00)
\qbezier(30.00,20.00)(40.00,10.00)(50.00,0.00)
\qbezier(10.00,0.00)(30.00,20.00)(50.00,40.00)
\end{picture}}
}
\def\painteduuuucut{{
\unitlength=0.02em
\begin{picture}(100.00,80.00)(0.00,0.00)
\thicklines
\put(50.00,80.00){\line(0,-1){40.00}}
\multiput(47,80.00)(1,0){7}{\line(0,-1){20}}
\put(50.00,80.00){\line(0,-1){20.00}}
\qbezier(50.00,40.00)(70.00,20.00)(90.00,0.00)
\qbezier(30.00,20.00)(40.00,10.00)(50.00,0.00)
\qbezier(10.00,0.00)(30.00,20.00)(50.00,40.00)
\end{picture}}
}
\def\painteduuuuacut{{
\unitlength=.02em
\begin{picture}(150.00,80.00)(0.00,0.00)
\thicklines
\multiput(67,80.00)(1,0){7}{\line(0,-1){20}}
\multiput(38.5,38.5)(.5,.5){8}{\qbezier(30.00,20.00)(40.00,10.00)(50.00,0.00)}
\multiput(38.5,42.5)(.5,-.5){8}{\qbezier(30.00,20.00)(20.00,10.00)(10.00,0.00)}
\qbezier(30.00,20.00)(40.00,10.00)(50.00,0.00)
\put(70.00,60.00){\line(1,-1){60.00}}
\qbezier(70,60)(80,50)(90,40)
\put(70.00,60.00){\line(0,1){20.00}}
\put(10.00,0.00){\line(1,1){60.00}}
\end{picture}}
}
\def\painteduuuuacutmmo{{
\unitlength=.015em
\begin{picture}(150.00,80.00)(0.00,0.00)
\thicklines
\multiput(67,80.00)(1,0){7}{\line(0,-1){20}}
\multiput(38.5,38.5)(.5,.5){8}{\qbezier(30.00,20.00)(40.00,10.00)(50.00,0.00)}
\multiput(38.5,42.5)(.5,-.5){8}{\qbezier(30.00,20.00)(20.00,10.00)(10.00,0.00)}
\qbezier(30.00,20.00)(40.00,10.00)(50.00,0.00)
\put(70.00,60.00){\line(1,-1){60.00}}
\qbezier(70,60)(80,50)(90,40)
\put(70.00,60.00){\line(0,1){20.00}}
\put(10.00,0.00){\line(1,1){60.00}}
\end{picture}}
}
\def\paintedffffcut{{
\unitlength=.02em
\begin{picture}(140.00,80.00)(0.00,0.00)
\thicklines
\multiput(67,80.00)(1,0){7}{\line(0,-1){20}}
\multiput(38.5,38.5)(.5,.5){8}{\qbezier(30.00,20.00)(50.00,0.00)(70.00,-20.00)}
\multiput(38.5,42.5)(.5,-.5){8}{\qbezier(30.00,20.00)(10,0)(-10,-20)}
\multiput(18.5,18.5)(.5,.5){8}{\qbezier(30.00,20.00)(40.00,10.00)(50.00,0.00)}
\qbezier(50.00,40.00)(70.00,20.00)(90.00,0.00)
\put(70.00,80.00){\line(0,-1){20.00}}
\put(70.00,60.00){\line(1,-1){60.00}}
\put(10.00,0.00){\line(1,1){60.00}}
\end{picture}}
}
\def\paintedffffcutred{{
\unitlength=.015em
\begin{picture}(140.00,80.00)(0.00,0.00)
\thicklines
\multiput(67,80.00)(1,0){7}{\line(0,-1){20}}
\multiput(38.5,38.5)(.5,.5){8}{\qbezier(30.00,20.00)(50.00,0.00)(70.00,-20.00)}
\multiput(38.5,42.5)(.5,-.5){8}{\qbezier(30.00,20.00)(10,0)(-10,-20)}
\multiput(18.5,18.5)(.5,.5){8}{\qbezier(30.00,20.00)(40.00,10.00)(50.00,0.00)}
\qbezier(50.00,40.00)(70.00,20.00)(90.00,0.00)
\put(70.00,80.00){\line(0,-1){20.00}}
\put(70.00,60.00){\line(1,-1){60.00}}
\put(10.00,0.00){\line(1,1){60.00}}
\end{picture}}
}
\def\paintedffffcutmod{{
\unitlength=.02em
\begin{picture}(140.00,80.00)(0.00,0.00)
\thicklines
\multiput(67,80.00)(1,0){7}{\line(0,-1){20}}
\multiput(38.5,38.5)(.5,.5){8}{\qbezier(30.00,20.00)(50.00,0.00)(70.00,-20.00)}
\multiput(38.5,42.5)(.5,-.5){8}{\qbezier(30.00,20.00)(10,0)(-10,-20)}
\multiput(22.5,18.5)(-.5,.5){8}{\qbezier(70.00,20.00)(60.00,10.00)(50.00,0.00)}
\qbezier(90.00,40.00)(70.00,20.00)(50.00,0.00)
\put(70.00,80.00){\line(0,-1){20.00}}
\put(70.00,60.00){\line(1,-1){60.00}}
\put(10.00,0.00){\line(1,1){60.00}}
\end{picture}}
}
\def\paintedffffcutmodred{{
\unitlength=.015em
\begin{picture}(140.00,80.00)(0.00,0.00)
\thicklines
\multiput(67,80.00)(1,0){7}{\line(0,-1){20}}
\multiput(38.5,38.5)(.5,.5){8}{\qbezier(30.00,20.00)(50.00,0.00)(70.00,-20.00)}
\multiput(38.5,42.5)(.5,-.5){8}{\qbezier(30.00,20.00)(10,0)(-10,-20)}
\multiput(22.5,18.5)(-.5,.5){8}{\qbezier(70.00,20.00)(60.00,10.00)(50.00,0.00)}
\qbezier(90.00,40.00)(70.00,20.00)(50.00,0.00)
\put(70.00,80.00){\line(0,-1){20.00}}
\put(70.00,60.00){\line(1,-1){60.00}}
\put(10.00,0.00){\line(1,1){60.00}}
\end{picture}}
}
\def\painteduuuuacutmod{{
\unitlength=.02em
\begin{picture}(150.00,80.00)(0.00,0.00)
\thicklines
\multiput(67,80.00)(1,0){7}{\line(0,-1){20}}
\multiput(38.5,38.5)(.5,.5){8}{\qbezier(30.00,20.00)(40.00,10.00)(50.00,0.00)}
\multiput(38.5,42.5)(.5,-.5){8}{\qbezier(30.00,20.00)(20.00,10.00)(10.00,0.00)}
\qbezier(102.00,22.00)(92.00,12.00)(82.00,2.00)
\put(70.00,60.00){\line(1,-1){60.00}}
\put(70.00,60.00){\line(0,1){20.00}}
\put(10.00,0.00){\line(1,1){60.00}}
\end{picture}}
}
\def\painteduuuuacutmodred{{
\unitlength=.015em
\begin{picture}(150.00,80.00)(0.00,0.00)
\thicklines
\multiput(67,80.00)(1,0){7}{\line(0,-1){20}}
\multiput(38.5,38.5)(.5,.5){8}{\qbezier(30.00,20.00)(40.00,10.00)(50.00,0.00)}
\multiput(38.5,42.5)(.5,-.5){8}{\qbezier(30.00,20.00)(20.00,10.00)(10.00,0.00)}
\qbezier(102.00,22.00)(92.00,12.00)(82.00,2.00)
\put(70.00,60.00){\line(1,-1){60.00}}
\put(70.00,60.00){\line(0,1){20.00}}
\put(10.00,0.00){\line(1,1){60.00}}
\end{picture}}
}
\def\painteduuuucutmod{{
\unitlength=0.02em
\begin{picture}(100.00,80.00)(0.00,0.00)
\thicklines
\multiput(47,80.00)(1,0){7}{\line(0,-1){20}}
\put(50.00,80.00){\line(0,-1){40.00}}
\qbezier(50.00,40.00)(70.00,20.00)(90.00,0.00)
\qbezier(60.00,22.00)(50.00,12.00)(40.00,2.00)
\qbezier(10.00,0.00)(30.00,20.00)(50.00,40.00)
\end{picture}}
}
\def\painteduuuucutcut{{
\unitlength=0.02em
\begin{picture}(100.00,80.00)(0.00,0.00)
\thicklines
\multiput(47,60.00)(1,0){7}{\line(0,-1){20}}
\put(50.00,60.00){\line(0,-1){20.00}}
\qbezier(50.00,40.00)(70.00,20.00)(90.00,0.00)
\qbezier(30.00,20.00)(40.00,10.00)(50.00,0.00)
\qbezier(10.00,0.00)(30.00,20.00)(50.00,40.00)
\end{picture}}
}
\def\painteduuuucutcutmod{{
\unitlength=0.02em
\begin{picture}(100.00,80.00)(0.00,0.00)
\thicklines
\put(-20,-20){
\multiput(67,80.00)(1,0){7}{\line(0,-1){20}}
\multiput(38.5,38.5)(.5,.5){8}{\qbezier(30.00,20.00)(40.00,10.00)(50.00,0.00)}
\multiput(38.5,42.5)(.5,-.5){8}{\qbezier(30.00,20.00)(20.00,10.00)(10.00,0.00)}
}
\put(50.00,60.00){\line(0,-1){20.00}}
\qbezier(50.00,40.00)(70.00,20.00)(90.00,0.00)
\qbezier(30.00,20.00)(40.00,10.00)(50.00,0.00)
\qbezier(10.00,0.00)(30.00,20.00)(50.00,40.00)
\end{picture}}
}
\def\paintedpoiucut{{
\unitlength=0.025em
\begin{picture}(100.00,60.00)(0.00,0.00)
\thicklines
\put(-20,-20){
\multiput(67,80.00)(.8,0){7}{\line(0,-1){40}}
\multiput(38.5,38.5)(.45,.45){8}{\qbezier(30.00,20.00)(40.00,10.00)(50.00,0.00)}
\multiput(38.7,42.3)(.45,-.45){8}{\qbezier(30.00,20.00)(20.00,10.00)(10.00,0.00)}
}
\put(50.00,60.00){\line(0,-1){60.00}}
\qbezier(50.00,40.00)(70.00,20.00)(90.00,0.00)
\qbezier(10.00,0.00)(30.00,20.00)(50.00,40.00)
\end{picture}}
}
\def\paintedpoiucutmmod{{
\unitlength=0.02em
\begin{picture}(100.00,60.00)(0.00,0.00)
\thicklines
\put(-20,-20){
\multiput(67,80.00)(.8,0){7}{\line(0,-1){40}}
\multiput(38.5,38.5)(.45,.45){8}{\qbezier(30.00,20.00)(40.00,10.00)(50.00,0.00)}
\multiput(38.7,42.3)(.45,-.45){8}{\qbezier(30.00,20.00)(20.00,10.00)(10.00,0.00)}
}
\put(50.00,60.00){\line(0,-1){60.00}}
\qbezier(50.00,40.00)(70.00,20.00)(90.00,0.00)
\qbezier(10.00,0.00)(30.00,20.00)(50.00,40.00)
\end{picture}}
}
\def\painteduuuucutcutmodmod{{
\unitlength=0.02em
\begin{picture}(100.00,80.00)(0.00,0.00)
\thicklines
\put(-20,-20){
\multiput(67,80.00)(1,0){7}{\line(0,-1){20}}
\multiput(38.5,38.5)(.5,.5){8}{\qbezier(30.00,20.00)(40.00,10.00)(50.00,0.00)}
\multiput(38.5,42.5)(.5,-.5){8}{\qbezier(30.00,20.00)(20.00,10.00)(10.00,0.00)}
}
\put(50.00,60.00){\line(0,-1){20.00}}
\qbezier(50.00,40.00)(70.00,20.00)(90.00,0.00)
\qbezier(70.00,20.00)(60.00,10.00)(50.00,0.00)
\qbezier(10.00,0.00)(30.00,20.00)(50.00,40.00)
\end{picture}}
}
\def\painteduuuucutcutmodmodmod{{
\unitlength=0.02em
\begin{picture}(100.00,80.00)(0.00,0.00)
\thicklines
\multiput(46,60.00)(1,0){8}{\line(0,-1){21}}
\put(50.00,60.00){\line(0,-1){20.00}}
\qbezier(50.00,40.00)(70.00,20.00)(90.00,0.00)
\qbezier(70.00,20.00)(60.00,10.00)(50.00,0.00)
\qbezier(10.00,0.00)(30.00,20.00)(50.00,40.00)
\end{picture}}
}
\def\paintedascicut{{
\unitlength=0.025em
\begin{picture}(60.00,60.00)(0.00,0.00)
\thicklines
\multiput(27,60.00)(1,0){7}{\line(0,-1){20}}
\put(30.00,60.00){\line(0,-1){60.00}}
\qbezier(30.00,20.00)(40.00,10.00)(50.00,0.00)
\qbezier(10.00,0.00)(20.00,10.00)(30.00,20.00)
\end{picture}}
}
\def\paindedascicutmod{{
\unitlength=0.025em
\begin{picture}(60.00,40.00)(0.00,-20.00)
\thicklines
\multiput(27,40.00)(1,0){7}{\line(0,-1){20}}
\put(30.00,40.00){\line(0,-1){40.00}}
\qbezier(30.00,20.00)(40.00,10.00)(50.00,0.00)
\qbezier(10.00,0.00)(20.00,10.00)(30.00,20.00)
\end{picture}}
}
\def\paindedascicutmodvetsi{{
\unitlength=0.035em
\begin{picture}(60.00,40.00)(0.00,-20.00)
\thicklines
\multiput(27.2,40.00)(.8,0){7}{\line(0,-1){20}}
\put(30.00,40.00){\line(0,-1){40.00}}
\qbezier(30.00,20.00)(40.00,10.00)(50.00,0.00)
\qbezier(10.00,0.00)(20.00,10.00)(30.00,20.00)
\end{picture}}
}
\def\uuuucutcut{{
\unitlength=0.02em
\begin{picture}(100.00,80.00)(0.00,0.00)
\put(0.00,40.00){\line(1,0){100.00}}
\thicklines
\put(50.00,60.00){\line(0,-1){20.00}}
\qbezier(50.00,40.00)(70.00,20.00)(90.00,0.00)
\qbezier(30.00,20.00)(40.00,10.00)(50.00,0.00)
\qbezier(10.00,0.00)(30.00,20.00)(50.00,40.00)
\end{picture}}
}
\def\uuuucutcutmod{{
\unitlength=0.02em
\begin{picture}(100.00,80.00)(0.00,0.00)
\put(0.00,20.00){\line(1,0){100.00}}
\thicklines
\put(50.00,60.00){\line(0,-1){20.00}}
\qbezier(50.00,40.00)(70.00,20.00)(90.00,0.00)
\qbezier(30.00,20.00)(40.00,10.00)(50.00,0.00)
\qbezier(10.00,0.00)(30.00,20.00)(50.00,40.00)
\end{picture}}
}
\def\uuuucutcutmodmod{{
\unitlength=0.02em
\begin{picture}(100.00,80.00)(0.00,0.00)
\put(0.00,20.00){\line(1,0){100.00}}
\thicklines
\put(50.00,60.00){\line(0,-1){20.00}}
\qbezier(50.00,40.00)(70.00,20.00)(90.00,0.00)
\qbezier(70.00,20.00)(60.00,10.00)(50.00,0.00)
\qbezier(10.00,0.00)(30.00,20.00)(50.00,40.00)
\end{picture}}
}
\def\uuuucutcutmodmodmod{{
\unitlength=0.02em
\begin{picture}(100.00,80.00)(0.00,0.00)
\put(0.00,40.00){\line(1,0){100.00}}
\thicklines
\put(50.00,60.00){\line(0,-1){20.00}}
\qbezier(50.00,40.00)(70.00,20.00)(90.00,0.00)
\qbezier(70.00,20.00)(60.00,10.00)(50.00,0.00)
\qbezier(10.00,0.00)(30.00,20.00)(50.00,40.00)
\end{picture}}
}
\def\uuuucutmod{{
\unitlength=0.02em
\begin{picture}(100.00,80.00)(0.00,0.00)
\put(0.00,60.00){\line(1,0){100.00}}
\thicklines
\put(50.00,80.00){\line(0,-1){40.00}}
\qbezier(50.00,40.00)(70.00,20.00)(90.00,0.00)
\qbezier(60.00,22.00)(50.00,12.00)(40.00,2.00)
\qbezier(10.00,0.00)(30.00,20.00)(50.00,40.00)
\end{picture}}
}
\def\uuuumod{{
\unitlength=0.02em
\begin{picture}(150.00,80.00)(0.00,0.00)
\put(0.00,20.00){\line(1,0){150.00}}
\put(0.00,40.00){\line(1,0){150.00}}
\put(0.00,60.00){\line(1,0){150.00}}
\thicklines
\put(50.00,80.00){\line(0,-1){40.00}}
\put(120.00,60.00){\line(0,-1){60.00}}
\qbezier(120.00,60.00)(130.00,70.00)(140.00,80.00)
\qbezier(120.00,60.00)(110.00,70.00)(100.00,80.00)
\qbezier(50.00,40.00)(70.00,20.00)(90.00,0.00)
\qbezier(70.00,20.00)(60.00,10.00)(50.00,0.00)
\qbezier(10.00,0.00)(30.00,20.00)(50.00,40.00)
\end{picture}}
}
\def\uytr{{
\unitlength=0.02em
\begin{picture}(260.00,80.00)(0.00,0.00)
\put(0.00,20.00){\line(1,0){260.00}}
\put(0.00,40.00){\line(1,0){260.00}}
\put(0.00,60.00){\line(1,0){260.00}}
\thicklines
\qbezier(180.00,40.00)(200.00,60.00)(220.00,80.00)
\put(200.00,20.00){\line(0,-1){20.00}}
\put(200.00,20.00){\line(5,6){50.00}}
\put(140.00,80.00){\line(1,-1){60.00}}
\put(70.00,60.00){\line(0,1){20.00}}
\put(70.00,60.00){\line(1,-1){60.00}}
\put(10.00,0.00){\line(1,1){60.00}}
\end{picture}}
}
\def\uytrmod{{
\unitlength=0.02em
\begin{picture}(260.00,80.00)(0.00,0.00)
\put(0.00,20.00){\line(1,0){260.00}}
\put(0.00,40.00){\line(1,0){260.00}}
\put(0.00,60.00){\line(1,0){260.00}}
\thicklines
\qbezier(220.00,40.00)(200.00,60.00)(180.00,80.00)
\put(200.00,20.00){\line(0,-1){20.00}}
\put(200.00,20.00){\line(5,6){50.00}}
\put(140.00,80.00){\line(1,-1){60.00}}
\put(70.00,60.00){\line(0,1){20.00}}
\put(70.00,60.00){\line(1,-1){60.00}}
\put(10.00,0.00){\line(1,1){60.00}}
\end{picture}}
}
\def\aa{
{
\unitlength=0.02em
\begin{picture}(160.00,80.00)(0.00,0.00)
\put(10.00,20.00){\line(1,0){140.00}}
\put(10.00,40.00){\line(1,0){140.00}}
\put(10.00,60.00){\line(1,0){140.00}}
\thicklines
\put(80.00,60.00){\line(0,1){20.00}}
\qbezier(60.00,40.00)(80.00,20.00)(100.00,0.00)
\qbezier(40.00,20.00)(50.00,10.00)(60.00,0.00)
\put(80.00,60.00){\line(1,-1){60.00}}
\put(80.00,60.00){\line(0,1){0.00}}
\put(20.00,0.00){\line(1,1){60.00}}
\end{picture}}
}
\def\aaup{
{
\unitlength=0.02em
\begin{picture}(160.00,80.00)(0.00,-80.00)
\put(10.00,-20.00){\line(1,0){140.00}}
\put(10.00,-40.00){\line(1,0){140.00}}
\put(10.00,-60.00){\line(1,0){140.00}}
\thicklines
\put(80.00,-60.00){\line(0,-1){20.00}}
\qbezier(60.00,-40.00)(80.00,-20.00)(100.00,0.00)
\qbezier(40.00,-20.00)(50.00,-10.00)(60.00,0.00)
\put(80.00,-60.00){\line(1,1){60.00}}
\put(80.00,-60.00){\line(0,-1){0.00}}
\put(20.00,0.00){\line(1,-1){60.00}}
\end{picture}}
}
\def\bb{{
\unitlength=0.02em
\begin{picture}(160.00,80.00)(0.00,0.00)
\put(10.00,20.00){\line(1,0){140.00}}
\put(10.00,40.00){\line(1,0){140.00}}
\put(10.00,60.00){\line(1,0){140.00}}
\thicklines
\put(80.00,60.00){\line(0,1){20.00}}
\put(80.00,60.00){\line(1,-1){60.00}}
\put(80.00,60.00){\line(0,1){0.00}}
\put(20.00,0.00){\line(1,1){60.00}}
\qbezier(120.00,20.00)(110.00,10.00)(100.00,0.00)
\qbezier(100.00,40.00)(80.00,20.00)(60.00,0.00)
\end{picture}}
}
\def\bbup{{
\unitlength=0.02em
\begin{picture}(160.00,80.00)(0.00,-80.00)
\put(10.00,-20.00){\line(1,0){140.00}}
\put(10.00,-40.00){\line(1,0){140.00}}
\put(10.00,-60.00){\line(1,0){140.00}}
\thicklines
\put(80.00,-60.00){\line(0,-1){20.00}}
\put(80.00,-60.00){\line(1,1){60.00}}
\put(80.00,-60.00){\line(0,-1){0.00}}
\put(20.00,0.00){\line(1,-1){60.00}}
\qbezier(120.00,-20.00)(110.00,-10.00)(100.00,0.00)
\qbezier(100.00,-40.00)(80.00,-20.00)(60.00,0.00)
\end{picture}}
}
\def\cc{
{
\unitlength=0.02em
\begin{picture}(160.00,80.00)(0.00,0.00)
\put(10.00,20.00){\line(1,0){140}}
\put(10.00,40.00){\line(1,0){140}}
\put(10.00,60.00){\line(1,0){140}}
\thicklines
\put(80.00,60.00){\line(0,1){20.00}}
\put(80.00,60.00){\line(1,-1){60.00}}
\put(80.00,60.00){\line(0,1){0.00}}
\put(20.00,0.00){\line(1,1){60.00}}
\qbezier(80.00,20.00)(90.00,10.00)(100.00,0.00)
\qbezier(100.00,40.00)(80.00,20.00)(60.00,0.00)
\end{picture}}
}
\def\ccup{
{
\unitlength=0.02em
\begin{picture}(160.00,80.00)(0.00,-80.00)
\put(10.00,-20.00){\line(1,0){140}}
\put(10.00,-40.00){\line(1,0){140}}
\put(10.00,-60.00){\line(1,0){140}}
\thicklines
\put(80.00,-60.00){\line(0,-1){20.00}}
\put(80.00,-60.00){\line(1,1){60.00}}
\put(80.00,-60.00){\line(0,-1){0.00}}
\put(20.00,0.00){\line(1,-1){60.00}}
\qbezier(80.00,-20.00)(90.00,-10.00)(100.00,0.00)
\qbezier(100.00,-40.00)(80.00,-20.00)(60.00,0.00)
\end{picture}}
}
\def\ffff{{
\unitlength=.02em
\begin{picture}(270.00,80.00)(0.00,0.00)
\put(0.00,60.00){\line(1,0){270.00}}
\put(0.00,40.00){\line(1,0){270.00}}
\put(0.00,20.00){\line(1,0){270.00}}
\thicklines
\qbezier(50.00,40.00)(70.00,20.00)(90.00,0.00)
\put(200.00,20.00){\line(0,-1){20.00}}
\put(200.00,20.00){\line(1,1){60.00}}
\put(140.00,80.00){\line(1,-1){60.00}}
\put(70.00,80.00){\line(0,-1){20.00}}
\put(70.00,60.00){\line(1,-1){60.00}}
\put(10.00,0.00){\line(1,1){60.00}}
\end{picture}}
}
\def\ffffcut{{
\unitlength=.02em
\begin{picture}(140.00,80.00)(0.00,0.00)
\put(0.00,20.00){\line(1,0){140.00}}
\thicklines
\qbezier(50.00,40.00)(70.00,20.00)(90.00,0.00)
\put(70.00,80.00){\line(0,-1){20.00}}
\put(70.00,60.00){\line(1,-1){60.00}}
\put(10.00,0.00){\line(1,1){60.00}}
\end{picture}}
}
\def\ffffcutmod{{
\unitlength=.02em
\begin{picture}(140.00,80.00)(0.00,0.00)
\put(0.00,20.00){\line(1,0){140.00}}
\thicklines
\qbezier(90.00,40.00)(70.00,20.00)(50.00,0.00)
\put(70.00,80.00){\line(0,-1){20.00}}
\put(70.00,60.00){\line(1,-1){60.00}}
\put(10.00,0.00){\line(1,1){60.00}}
\end{picture}}
}
\def\ffffmod{{
\unitlength=.02em
\begin{picture}(270.00,80.00)(0.00,0.00)
\put(0.00,60.00){\line(1,0){270.00}}
\put(0.00,40.00){\line(1,0){270.00}}
\put(0.00,20.00){\line(1,0){270.00}}
\thicklines
\qbezier(90.00,40.00)(70.00,20.00)(50.00,0.00)
\put(200.00,20.00){\line(0,-1){20.00}}
\put(200.00,20.00){\line(1,1){60.00}}
\put(140.00,80.00){\line(1,-1){60.00}}
\put(70.00,80.00){\line(0,-1){20.00}}
\put(70.00,60.00){\line(1,-1){60.00}}
\put(10.00,0.00){\line(1,1){60.00}}
\end{picture}}
}
\def\ee{
{
\unitlength=0.02em
\begin{picture}(160.00,80.00)(0.00,0.00)
\put(10.00,40.00){\line(1,0){140}}
\put(10.00,60.00){\line(1,0){140}}
\put(10.00,20.00){\line(1,0){140}}
\thicklines
\put(80.00,60.00){\line(0,1){20.00}}
\put(80.00,60.00){\line(1,-1){60.00}}
\put(20.00,0.00){\line(1,1){60.00}}
\qbezier(80.00,20.00)(70.00,30.00)(60.00,40.00)
\qbezier(80.00,20.00)(70.00,10.00)(60.00,0.00)
\qbezier(80.00,20.00)(90.00,10.00)(100.00,0.00)
\end{picture}}
}
\def\eeup{
{
\unitlength=0.02em
\begin{picture}(160.00,80.00)(0.00,-80.00)
\put(10.00,-40.00){\line(1,0){140}}
\put(10.00,-60.00){\line(1,0){140}}
\put(10.00,-20.00){\line(1,0){140}}
\thicklines
\put(80.00,-60.00){\line(0,-1){20.00}}
\put(80.00,-60.00){\line(1,1){60.00}}
\put(20.00,0.00){\line(1,-1){60.00}}
\qbezier(80.00,-20.00)(70.00,-30.00)(60.00,-40.00)
\qbezier(80.00,-20.00)(70.00,-10.00)(60.00,0.00)
\qbezier(80.00,-20.00)(90.00,-10.00)(100.00,0.00)
\end{picture}}
}
\def\ff{
{
\unitlength=0.02em
\begin{picture}(160.00,80.00)(0.00,0.00)
\put(10.00,20.00){\line(1,0){140}}
\put(10.00,40.00){\line(1,0){140}}
\put(10.00,60.00){\line(1,0){140}}
\thicklines
\put(80.00,60.00){\line(0,1){20.00}}
\put(80.00,60.00){\line(1,-1){60.00}}
\put(20.00,0.00){\line(1,1){60.00}}
\qbezier(100.00,40.00)(90.00,20.00)(80.00,0.00)
\qbezier(40.00,20.00)(50.00,10.00)(60.00,0.00)
\end{picture}}
}
\def\ffinv{
{
\unitlength=0.02em
\begin{picture}(160.00,80.00)(0.00,-80.00)
\put(10.00,-20.00){\line(1,0){140}}
\put(10.00,-40.00){\line(1,0){140}}
\put(10.00,-60.00){\line(1,0){140}}
\thicklines
\put(80.00,-60.00){\line(0,-1){20.00}}
\put(80.00,-60.00){\line(1,1){60.00}}
\put(20.00,0.00){\line(1,-1){60.00}}
\qbezier(100.00,-40.00)(90.00,-20.00)(80.00,0.00)
\qbezier(40.00,-20.00)(50.00,-10.00)(60.00,0.00)
\end{picture}}
}
\def\gg{
{
\unitlength=0.02em
\begin{picture}(160.00,80.00)(0.00,0.00)
\put(10.00,20.00){\line(1,0){140}}
\put(10.00,40.00){\line(1,0){140}}
\put(10.00,60.00){\line(1,0){140}}
\thicklines
\put(80.00,60.00){\line(0,1){20.00}}
\put(80.00,60.00){\line(1,-1){60.00}}
\put(80.00,60.00){\line(0,1){0.00}}
\put(20.00,0.00){\line(1,1){60.00}}
\qbezier(60.00,40.00)(70.00,20.00)(80.00,0.00)
\qbezier(120.00,20.00)(110.00,10.00)(100.00,0.00)
\end{picture}}
}
\def\gginv{
{
\unitlength=0.02em
\begin{picture}(160.00,80.00)(0.00,-80.00)
\put(10.00,-20.00){\line(1,0){140}}
\put(10.00,-40.00){\line(1,0){140}}
\put(10.00,-60.00){\line(1,0){140}}
\thicklines
\put(80.00,-60.00){\line(0,-1){20.00}}
\put(80.00,-60.00){\line(1,1){60.00}}
\put(80.00,-60.00){\line(0,-1){0.00}}
\put(20.00,0.00){\line(1,-1){60.00}}
\qbezier(60.00,-40.00)(70.00,-20.00)(80.00,0.00)
\qbezier(120.00,-20.00)(110.00,-10.00)(100.00,0.00)
\end{picture}}
}
\def\hh{{
\unitlength=0.025em
\begin{picture}(120.00,60.00)(0.00,0.00)
\put(10.00,20.00){\line(1,0){100}}
\put(10.00,40.00){\line(1,0){100}}
\thicklines
\put(60.00,60.00){\line(0,-1){20.00}}
\qbezier(60.00,40.00)(80.00,20.00)(100.00,0.00)
\qbezier(40.00,20.00)(50.00,10.00)(60.00,0.00)
\qbezier(40.00,20.00)(40.00,10.00)(40.00,0.00)
\qbezier(60.00,40.00)(40.00,20.00)(20.00,0.00)
\end{picture}}
}
\def\hhinv{{
\unitlength=0.025em
\begin{picture}(120.00,60.00)(0.00,-60.00)
\put(10.00,-20.00){\line(1,0){100}}
\put(10.00,-40.00){\line(1,0){100}}
\thicklines
\put(60.00,-60.00){\line(0,1){20.00}}
\qbezier(60.00,-40.00)(80.00,-20.00)(100.00,0.00)
\qbezier(40.00,-20.00)(50.00,-10.00)(60.00,0.00)
\qbezier(40.00,-20.00)(40.00,-10.00)(40.00,0.00)
\qbezier(60.00,-40.00)(40.00,-20.00)(20.00,0.00)
\end{picture}}
}
\def\ii{{
\unitlength=0.025em
\begin{picture}(120.00,60.00)(0.00,0.00)
\put(10.00,20.00){\line(1,0){100}}
\put(10.00,40.00){\line(1,0){100}}
\thicklines
\put(60.00,60.00){\line(0,-1){20.00}}
\qbezier(60.00,40.00)(80.00,20.00)(100.00,0.00)
\qbezier(60.00,40.00)(40.00,20.00)(20.00,0.00)
\qbezier(80.00,20.00)(80.00,10.00)(80.00,0.00)
\qbezier(80.00,20.00)(70.00,10.00)(60.00,0.00)
\end{picture}}
}
\def\iiinv{{
\unitlength=0.025em
\begin{picture}(120.00,60.00)(0.00,-60.00)
\put(10.00,-20.00){\line(1,0){100}}
\put(10.00,-40.00){\line(1,0){100}}
\thicklines
\put(60.00,-60.00){\line(0,1){20.00}}
\qbezier(60.00,-40.00)(80.00,-20.00)(100.00,0.00)
\qbezier(60.00,-40.00)(40.00,-20.00)(20.00,0.00)
\qbezier(80.00,-20.00)(80.00,-10.00)(80.00,0.00)
\qbezier(80.00,-20.00)(70.00,-10.00)(60.00,0.00)
\end{picture}}
}
\def\ssss{{
\unitlength=0.025em
\begin{picture}(190.00,60.00)(0.00,0.00)
\put(0.00,20.00){\line(1,0){190.00}}
\put(0.00,40.00){\line(1,0){190.00}}
\thicklines
\put(140.00,20.00){\line(0,-1){20.00}}
\put(50.00,60.00){\line(0,-1){20.00}}
\qbezier(140.00,20.00)(160.00,40.00)(180.00,60.00)
\qbezier(100.00,60.00)(120.00,40.00)(140.00,20.00)
\qbezier(30.00,20.00)(40.00,10.00)(50.00,0.00)
\qbezier(50.00,40.00)(70.00,20.00)(90.00,0.00)
\qbezier(10.00,0.00)(30.00,20.00)(50.00,40.00)
\end{picture}}
}
\def\ssssmod{{
\unitlength=0.025em
\begin{picture}(190.00,60.00)(0.00,0.00)
\put(0.00,20.00){\line(1,0){190.00}}
\put(0.00,40.00){\line(1,0){190.00}}
\thicklines
\put(140.00,20.00){\line(0,-1){20.00}}
\put(50.00,60.00){\line(0,-1){20.00}}
\qbezier(140.00,20.00)(160.00,40.00)(180.00,60.00)
\qbezier(100.00,60.00)(120.00,40.00)(140.00,20.00)
\qbezier(70.00,20.00)(60.00,10.00)(50.00,0.00)
\qbezier(50.00,40.00)(70.00,20.00)(90.00,0.00)
\qbezier(10.00,0.00)(30.00,20.00)(50.00,40.00)
\end{picture}}
}
\def\jj{{
\unitlength=.025em
\begin{picture}(120.00,60.00)(0.00,0.00)
\put(10.00,20.00){\line(1,0){100}}
\put(10.00,40.00){\line(1,0){100}}
\thicklines
\put(60.00,60.00){\line(0,-1){20.00}}
\qbezier(60.00,40.00)(80.00,20.00)(100.00,0.00)
\qbezier(60.00,40.00)(40.00,20.00)(20.00,0.00)
\qbezier(80.00,20.00)(75.00,10.00)(70.00,0.00)
\qbezier(40.00,20.00)(45.00,10.00)(50.00,0.00)
\end{picture}}
}
\def\jjinv{{
\unitlength=.025em
\begin{picture}(120.00,60.00)(0.00,-60.00)
\put(10.00,-20.00){\line(1,0){100}}
\put(10.00,-40.00){\line(1,0){100}}
\thicklines
\put(60.00,-60.00){\line(0,1){20.00}}
\qbezier(60.00,-40.00)(80.00,-20.00)(100.00,0.00)
\qbezier(60.00,-40.00)(40.00,-20.00)(20.00,0.00)
\qbezier(80.00,-20.00)(75.00,-10.00)(70.00,0.00)
\qbezier(40.00,-20.00)(45.00,-10.00)(50.00,0.00)
\end{picture}}
}
\def\kk{{
\unitlength=0.025em
\begin{picture}(120.00,60.00)(0.00,0.00)
\put(10.00,20.00){\line(1,0){100}}
\put(10.00,40.00){\line(1,0){100}}
\thicklines
\put(60.00,60.00){\line(0,-1){20.00}}
\qbezier(60.00,40.00)(80.00,20.00)(100.00,0.00)
\qbezier(60.00,40.00)(70.00,20.00)(80.00,0.00)
\qbezier(40.00,20.00)(50.00,10.00)(60.00,0.00)
\qbezier(60.00,40.00)(40.00,20.00)(20.00,0.00)
\end{picture}}
}
\def\kkinv{{
\unitlength=0.025em
\begin{picture}(120.00,60.00)(0.00,-60.00)
\put(10.00,-20.00){\line(1,0){100}}
\put(10.00,-40.00){\line(1,0){100}}
\thicklines
\put(60.00,-60.00){\line(0,1){20.00}}
\qbezier(60.00,-40.00)(80.00,-20.00)(100.00,0.00)
\qbezier(60.00,-40.00)(70.00,-20.00)(80.00,0.00)
\qbezier(40.00,-20.00)(50.00,-10.00)(60.00,0.00)
\qbezier(60.00,-40.00)(40.00,-20.00)(20.00,0.00)
\end{picture}}
}
\def\poiu{{
\unitlength=0.025em
\begin{picture}(190.00,60.00)(0.00,0.00)
\put(0.00,20.00){\line(1,0){190.00}}
\put(0.00,40.00){\line(1,0){190.00}}
\thicklines
\put(140.00,20.00){\line(0,-1){20.00}}
\put(50.00,60.00){\line(0,-1){60.00}}
\qbezier(140.00,20.00)(160.00,40.00)(180.00,60.00)
\qbezier(100.00,60.00)(120.00,40.00)(140.00,20.00)
\qbezier(50.00,40.00)(70.00,20.00)(90.00,0.00)
\qbezier(10.00,0.00)(30.00,20.00)(50.00,40.00)
\end{picture}}
}
\def\poiucut{{
\unitlength=0.025em
\begin{picture}(100.00,60.00)(0.00,0.00)
\put(0.00,20.00){\line(1,0){100.00}}
\thicklines
\put(50.00,60.00){\line(0,-1){60.00}}
\qbezier(50.00,40.00)(70.00,20.00)(90.00,0.00)
\qbezier(10.00,0.00)(30.00,20.00)(50.00,40.00)
\end{picture}}
}
\def\krtecek{{
\unitlength=0.025em
\begin{picture}(190.00,60.00)(0.00,0.00)
\put(0.00,20.00){\line(1,0){190.00}}
\put(0.00,40.00){\line(1,0){190.00}}
\thicklines
\put(50.00,60.00){\line(0,-1){20.00}}
\qbezier(120.00,40.00)(130.00,50.00)(140.00,60.00)
\put(140.00,20.00){\line(0,-1){20.00}}
\qbezier(140.00,20.00)(160.00,40.00)(180.00,60.00)
\qbezier(100.00,60.00)(120.00,40.00)(140.00,20.00)
\qbezier(50.00,40.00)(70.00,20.00)(90.00,0.00)
\qbezier(10.00,0.00)(30.00,20.00)(50.00,40.00)
\end{picture}}
}
\def\krtecekmodb{{
\unitlength=0.025em
\begin{picture}(190.00,60.00)(0.00,0.00)
\put(0.00,20.00){\line(1,0){190.00}}
\put(0.00,40.00){\line(1,0){190.00}}
\thicklines
\put(50.00,60.00){\line(0,-1){20.00}}
\qbezier(160.00,40.00)(150.00,50.00)(140.00,60.00)
\put(140.00,20.00){\line(0,-1){20.00}}
\qbezier(140.00,20.00)(160.00,40.00)(180.00,60.00)
\qbezier(100.00,60.00)(120.00,40.00)(140.00,20.00)
\qbezier(50.00,40.00)(70.00,20.00)(90.00,0.00)
\qbezier(10.00,0.00)(30.00,20.00)(50.00,40.00)
\end{picture}}
}
\def\krtecekmod{{
\unitlength=0.025em
\begin{picture}(190.00,60.00)(0.00,0.00)
\put(0.00,20.00){\line(1,0){190.00}}
\put(0.00,40.00){\line(1,0){190.00}}
\thicklines
\put(50.00,60.00){\line(0,-1){20.00}}
\put(140.00,60.00){\line(0,-1){60.00}}
\qbezier(140.00,20.00)(160.00,40.00)(180.00,60.00)
\qbezier(100.00,60.00)(120.00,40.00)(140.00,20.00)
\qbezier(50.00,40.00)(70.00,20.00)(90.00,0.00)
\qbezier(10.00,0.00)(30.00,20.00)(50.00,40.00)
\end{picture}}
}
\def\nninv{
{
\unitlength=0.025em
\begin{picture}(120.00,60.00)(0.00,-60.00)
\put(10.00,-20.00){\line(1,0){100}}
\put(10.00,-40.00){\line(1,0){100}}
\thicklines
\put(60.00,-60.00){\line(0,1){20.00}}
\qbezier(60.00,-40.00)(80.00,-20.00)(100.00,0.00)
\qbezier(60.00,-40.00)(40.00,-20.00)(20.00,0.00)
\qbezier(60.00,-20.00)(67.50,-10.00)(75.00,0.00)
\qbezier(60.00,-20.00)(52.5,-10.00)(45.00,0.00)
\put(60.00,-40.00){\line(0,1){20}}
\end{picture}}
}
\def\nn{
{
\unitlength=0.025em
\begin{picture}(120.00,60.00)(0.00,0.00)
\put(10.00,20.00){\line(1,0){100}}
\put(10.00,40.00){\line(1,0){100}}
\thicklines
\put(60.00,60.00){\line(0,-1){20.00}}
\qbezier(60.00,40.00)(80.00,20.00)(100.00,0.00)
\qbezier(60.00,40.00)(40.00,20.00)(20.00,0.00)
\qbezier(60.00,20.00)(67.50,10.00)(75.00,0.00)
\qbezier(60.00,20.00)(52.5,10.00)(45.00,0.00)
\put(60.00,40.00){\line(0,-1){20}}
\end{picture}}
}
\def\ll{{
\unitlength=0.025em
\begin{picture}(120.00,60.00)(0.00,0.00)
\put(10.00,20.00){\line(1,0){100}}
\put(10.00,40.00){\line(1,0){100}}
\thicklines
\put(60.00,60.00){\line(0,-1){20.00}}
\qbezier(60.00,40.00)(80.00,20.00)(100.00,0.00)
\qbezier(60.00,40.00)(40.00,20.00)(20.00,0.00)
\qbezier(60.00,40.00)(50.00,20.00)(40.00,0.00)
\qbezier(80.00,20.00)(70.00,10.00)(60.00,0.00)
\end{picture}}
}
\def\llinv{{
\unitlength=0.025em
\begin{picture}(120.00,60.00)(0.00,-60.00)
\put(10.00,-20.00){\line(1,0){100}}
\put(10.00,-40.00){\line(1,0){100}}
\thicklines
\put(60.00,-60.00){\line(0,1){20.00}}
\qbezier(60.00,-40.00)(80.00,-20.00)(100.00,0.00)
\qbezier(60.00,-40.00)(40.00,-20.00)(20.00,0.00)
\qbezier(60.00,-40.00)(50.00,-20.00)(40.00,0.00)
\qbezier(80.00,-20.00)(70.00,-10.00)(60.00,0.00)
\end{picture}}
}
\def\oo{{
\unitlength=0.035em
\begin{picture}(100.00,40.00)(0.00,0.00)
\put(10.00,20.00){\line(1,0){80}}
\thicklines
\qbezier(50.00,20.00)(55.00,10.00)(60.00,0.00)
\qbezier(50.00,20.00)(65.00,10.00)(80.00,0.00)
\qbezier(50.00,20.00)(45.00,10.00)(40.00,0.00)
\qbezier(50.00,20.00)(35.00,10.00)(20.00,0.00)
\put(50.00,40.00){\line(0,-1){20.00}}
\end{picture}}
}
\def\D{\Delta}
\def\ooinv{{
\unitlength=0.035em
\begin{picture}(100.00,40.00)(0.00,-40.00)
\put(10.00,-20.00){\line(1,0){80}}
\thicklines
\qbezier(50.00,-20.00)(55.00,-10.00)(60.00,0.00)
\qbezier(50.00,-20.00)(65.00,-10.00)(80.00,0.00)
\qbezier(50.00,-20.00)(45.00,-10.00)(40.00,0.00)
\qbezier(50.00,-20.00)(35.00,-10.00)(20.00,0.00)
\put(50.00,-40.00){\line(0,1){20.00}}
\end{picture}}
}
\def\ooooa{{
\unitlength=0.035em
\begin{picture}(110.00,40.00)(0.00,0.00)
\put(0.00,20.00){\line(1,0){110.00}}
\thicklines
\put(30.00,20.00){\line(0,-1){20.00}}
\put(80.00,20.00){\line(0,-1){20.00}}
\put(30.00,40.00){\line(0,-1){20.00}}
\qbezier(80.00,20.00)(90.00,30.00)(100.00,40.00)
\qbezier(60.00,40.00)(70.00,30.00)(80.00,20.00)
\qbezier(30.00,20.00)(40.00,10.00)(50.00,0.00)
\qbezier(10.00,0.00)(20.00,10.00)(30.00,20.00)
\end{picture}}
}
\def\oooob{{
\unitlength=0.035em
\begin{picture}(110.00,40.00)(0.00,0.00)
\put(0.00,20.00){\line(1,0){110.00}}
\thicklines
\put(80.00,40.00){\line(0,-1){20.00}}
\put(80.00,20.00){\line(0,-1){20.00}}
\put(30.00,40.00){\line(0,-1){20.00}}
\qbezier(80.00,20.00)(90.00,30.00)(100.00,40.00)
\qbezier(60.00,40.00)(70.00,30.00)(80.00,20.00)
\qbezier(30.00,20.00)(40.00,10.00)(50.00,0.00)
\qbezier(10.00,0.00)(20.00,10.00)(30.00,20.00)
\end{picture}}
}
\title{Bipermutahedron and  biassociahedron}
\author{Martin Markl}
\begin{document}
\bibliographystyle{plain}

\begin{abstract}
We give a simple description of the face poset of a version of the biassociahedra
that generalizes, in a straightforward manner, the
description of the faces of the Stasheff's associahedra via planar
trees.  We believe that our description will substantially simplify
the notation of \cite{sanenlidze-umble:HHA11} making it, as well as
the related papers, more accessible.
\end{abstract}

\keywords{Permutahedron, associahedron, tree, operad, PROP, zone, diaphragm}
\subjclass[2000]{16W30, 57T05, 18C10, 18G99}
\thanks{The author was supported by The Eduard \v Cech Institute 
                 P201/12/G028 and by RVO: 67985840.}
\address{Institute of Mathematics, Czech Academy, \v
  Zitn\'a 11, 115 67 Prague, The Czech Republic}

\address{MFF UK, Sokolovsk\'a 83, 186 75 Prague, The Czech Republic}

\maketitle

\setcounter{tocdepth}{1}
\tableofcontents

\section*{History and pitfalls}

In this introductory section we recall the history and indicate the
pitfalls of the `quest for the biassociahedron,' hoping to elucidate
the r\^ole of the present paper in this struggle. 

\noindent
{\bf History.}
Let us start by reviewing the precursor of the biassociahedron.
 J.~Stasheff in his seminal paper~\cite{stasheff:TAMS63} introduced
$A_\infty$-spaces (resp.~$A_\infty$-algebras, called also
strongly homotopy  or sh associative algebras) as
spaces (resp.~algebras) with a multiplication associative up to a
coherent system of homotopies. The central object of his approach was a cellular
operad $K = \{K_\me\}_{\me \geq 2}$ whose $\me$th piece $K_\me$ was a
convex $(\me-2)$-dimensional polytope called the Stasheff
associahedron. $A_\infty$-space was then defined as a topological space on which
the operad $K$ acted, while $A_\infty$-algebras were algebras over the
operad $\CC_*(K)$ of cellular chains on~$K$.     
Let us briefly recall the basic features of the construction
of~\cite{stasheff:TAMS63}, emphasizing the algebraic side. 
More details can be found for instance 
in~\cite[II.1.6]{markl-shnider-stasheff:book} or in the
original source~\cite{stasheff:TAMS63}.

Consider a dg-vector space $V$ with a homotopy associative 
multiplication $\mu:\otexp V2 \to
V$. This means that there is a chain homotopy $\mu_3 :
\otexp V3 \to V$ between $\mu(\mu \ot \id)$ and $\mu(\id \ot \mu)$,
where $\id$ denotes the identity endomorphism $\id : V \to V$. 
The homotopy $\mu_3$ will be symbolized by the~interval
\[
K_3:= \hskip 35pt
\unitlength=1.5pt
{
\unitlength=1.5pt
\begin{picture}(60.00,0.00)(0.00,-2.00)
\thicklines
\put(30.00,3){\makebox(0.00,0.00)[b]{$\mu_3(a,b,c)$}}
\put(65.00,00){\makebox(0.00,0.00)[l]{$a(bc)$}}
\put(-5,0.00){\makebox(0.00,0.00)[r]{$(ab)c$}}
\put(60.00,0.00){\makebox(0.00,0.00){$\bullet$}}
\put(0.00,0.00){\makebox(0.00,0.00){$\bullet$}}
\put(0.00,0.00){\line(1,0){60.00}}
\end{picture}}
\]
connecting the two possible products,  $(ab)c$ and $a(bc)$, 
of three elements $a,b,c \in V$.
We abbreviate, as usual, $(ab)c:= \mu\big(\mu(a,b),c\big) = \mu(\mu \ot \id)(a,b,c)$, \&c. 
As the next step, consider all possible products of four 
elements and organize them into the
vertices of the pentagon:
\begin{center}
\setlength{\unitlength}{1.2cm}
\thicklines
\begin{picture}(4,4)(-2,-0.5)
%
\put(-1,0){\line(1,0){2}}
\put(1,0){\line(1,2){1}}
\put(0,3){\line(2,-1){2}}
\put(-2,2){\line(2,1){2}}
\put(-2,2){\line(1,-2){1}}
%
\put(1,0){\makebox(0,0){$\bullet$}}
\put(2,2){\makebox(0,0){$\bullet$}}
\put(0,3){\makebox(0,0){$\bullet$}}
\put(-2,2){\makebox(0,0){$\bullet$}}
\put(-1,0){\makebox(0,0){$\bullet$}}
%
\put(0,3.2){\makebox(0,0)[b]{$(ab)(cd)$}}
\put(1.2,-.1){\makebox(0,0)[l]{$a\relax((bc)d\relax)$}}
\put(2.2,2){\makebox(0,0)[l]{$a\relax(b(cd)\relax)$}}
\put(-2.2,2){\makebox(0,0)[r]{$\relax((ab)c\relax)d$}}
\put(-1.2,-.1){\makebox(0,0)[r]{$\relax(a(bc)\relax)d$}}
%
\put(0,-.2){\makebox(0,0)[t]{{$\mu_3(a,bc,d)$}}}
\put(-1.75,1){\makebox(0,0)[r]{{$\mu_3(a,b,c)d$}}}
\put(1.75,1){\makebox(0,0)[l]{{$a\mu_3(b,c,d)$}}}
\put(-1,2.75){\makebox(0,0)[r]{{$\mu_3(ab,c,d)$}}}
\put(1,2.75){\makebox(0,0)[l]{{$\mu_3(a,b,cd)$}}}
\put(0,1.5){\makebox(0,0){{$K_4$}}}
\end{picture}
\end{center}
The products labeling adjacent vertices are homotopic and we labelled
the edges by the corresponding homotopies. Observe that all these 
homotopies are constructed using $\mu_3$ and the multiplication~$\mu_2$.

Next, we require the homotopy for the associativity to be {\em coherent\/},
by which we mean that the pentagon $K_4$ can be `filled' with a higher homotopy
$\mu_4 : \otexp V4 \to V$ whose differential equals the sum (with
appropriate signs) of the homotopies labelling the edges. 
This process can be continued, giving
rise to a sequence $K = \{K_\me\}_{\me \geq 2}$ of the {\em Stasheff
  associahedra\/}. It turns out that $K$ is a polyhedral
operad. An $A_\infty$-algebra is then an algebra over the operad $\CC_*(K)$ of
cellular chains on $K$.

Much later there appeared another, purely algebraic, way to introduce
$A_\infty$-algebras. As proved in~\cite{markl:zebrulka}, the operad
$\Associative$ for associative algebras admits a
unique, up to isomorphism, minimal cofibrant model
$\Ainftyop$
which turns out to be isomorphic to the operad $\CC_*(K)$. We may thus
as well say that $A_\infty$-algebras are algebras over the minimal
model of $\Associative$. 
Finally, one can describe $A_\infty$-algebras explicitly, 
as a structure with operations $\mu_\me :
\otexp V{\me} \to V$, $\me \geq 2$, satisfying a very explicit infinite set
of axioms, see~\cite[page~294]{stasheff:TAMS63}.
In the case of $A_\infty$-algebras thus {\em topology\/}, represented by the
associahedron, preceded {\em algebra\/}.

There were similar attempts to find a suitable notion of
$A_\infty$-bialgebras,\footnote{Other possible names are
  $B_\infty$-algebras or strongly homotopy bialgebras.} that is,
structures whose multiplication  and comultiplication are compatible
and (co)associative up to a~system of coherent homotopies. The
motivation for such a quest was, besides restless nature of human
mind, homotopy invariance and the related transfer properties which these
structures should posses. For instance, given a (strict) bialgebra $H$,
each dg-vector space quasi-isomorphic to the underlying dg-vector
space of $H$ ought to have an induced $A_\infty$-bialgebra structure.

Here {\em algebra\/} by far preceded {\em topology\/}.
The existence of a minimal 
model $\Binftyop$  for the PROP $B$ governing
bialgebras\footnote{PROPs generalize operads. We briefly recall them in
  the Appendix.}  was proved 
in~\cite{markl:ba}. According to
general philosophy~\cite{markl:ha}, $A_\infty$-bialgebras defined as algebras over
$\Binftyop$ are homotopy invariant concepts. Moreover, it
follows from the 
description of $\Binftyop$ given in~\cite{markl:ba} that an
$A_\infty$-bialgebra defined in this way
has operations $\mu^\en_\me : \otexp V\me \to \otexp V\en$, $m,n \in \N$,
$(m,n) \not= (1,1)$, but axioms as explicit as the ones for $A_\infty$-algebras
were given only for $m+n \leq 6$.

\noindent
{\bf Pitfalls.}  It is clearly desirable  to have some
polyhedral PROP $\KK =
\{\KK^\en_\me\}$  playing the same r\^ole for
$A_\infty$-bialgebras as the Stasheff's operad plays for
$A_\infty$-algebras. By this we mean
that $\Binftyop$ should be isomorphic to the PROP of cellular chains of $\KK$, so the 
differential in $\Binftyop$ and therefore also the axioms of
$A_\infty$-bialgebras would be encoded in the combinatorics of $\KK$. To
see where the
pitfalls are hidden, we try to mimic the inductive construction of
the associahedra in the context of bialgebras.
 
The first step is obvious. Assume we have a dg-vector space $V$ with a
multiplication $\mu : \otexp V2 \to V$ and a comultiplication $\Delta
: V \to \otexp V2$ such that $\mu$ is associative up to a~homotopy
$\mu_3^1 : \otexp V3 \to V$ symbolized by the interval
\[
K^1_3:= \hskip 35pt
\unitlength=1.5pt
{
\unitlength=1.5pt
\begin{picture}(60.00,10.00)(0.00,-2.00)
\thicklines
\put(30.00,3){\makebox(0.00,0.00)[b]{$\mu^2_3(a,b,c)$}}
\put(65.00,00){\makebox(0.00,0.00)[l]{$a(bc),$}}
\put(-5,0.00){\makebox(0.00,0.00)[r]{$(ab)c$}}
\put(60.00,0.00){\makebox(0.00,0.00){$\bullet$}}
\put(0.00,0.00){\makebox(0.00,0.00){$\bullet$}}
\put(0.00,0.00){\line(1,0){60.00}}
\end{picture}}
\]
$\mu$ and $\D$ are compatible up to a
homotopy $\mu^2_2 : \otexp V2 \to \otexp V2$ symbolized by
\[ 
K^2_2:=
\hskip 45pt
\unitlength=1.5pt
{
\unitlength=1.5pt
\begin{picture}(60.00,10.00)(0.00,-2.00)
\thicklines
\put(30.00,3){\makebox(0.00,0.00)[b]{$\mu^2_2(a,b)$}}
\put(65.00,00){\makebox(0.00,0.00)[l]{$\D(a)\D(b),$}}
\put(-5,0.00){\makebox(0.00,0.00)[r]{$\D(ab)$}}
\put(60.00,0.00){\makebox(0.00,0.00){$\bullet$}}
\put(0.00,0.00){\makebox(0.00,0.00){$\bullet$}}
\put(0.00,0.00){\line(1,0){60.00}}
\end{picture}}
\]
and $\D$ is coassociative up to a homotopy $\mu_1^3 : V \to \otexp V3$
depicted as
\[
K^3_1:=\hskip 40pt
\hskip 35pt
\unitlength=1.5pt
{
\unitlength=1.5pt
\begin{picture}(60.00,10)(0.00,-2.00)
\thicklines
\put(30.00,3){\makebox(0.00,0.00)[b]{$\mu^3_1(a)$}}
\put(65.00,00){\makebox(0.00,0.00)[l]{$(\id \ot \D)\D(a)$.}}
\put(-5,0.00){\makebox(0.00,0.00)[r]{$(\D \ot \id)\D(a)$}}
\put(60.00,0.00){\makebox(0.00,0.00){$\bullet$}}
\put(0.00,0.00){\makebox(0.00,0.00){$\bullet$}}
\put(0.00,0.00){\line(1,0){60.00}}
\end{picture}}
\]

Let us take all elements of $\otexp V2$ constructed out of three
elements of $V$ using $\D$ 
and the multiplication  
on the tensor powers of $V $ induced in the standard manner by $\mu$. 
Let us call such elements {\em algebraic\/}.
There are six of them, labelling the vertices
of a hexagon:
\begin{equation}
\label{Kdy_zas_s_Jarunkou?}
\raisebox{-2.7cm}{}
{
\unitlength=1.000000pt
\begin{picture}(180.00,72)(0.00,58)
\thicklines
\put(90.00,116){\makebox(0.00,0.00)[b]{$\D(\mu^1_3(a,b,c))$}}
\put(158,85){\makebox(0.00,0.00)[bl]{$\mu^2_2(a,bc)$}}
\put(186,30){\makebox(0.00,0.00)[l]{$\D(a)\mu^2_2(b,c)$}}
\put(-3,30){\makebox(0.00,0.00)[r]{$\mu^2_2(a,b)\D(c)$}}
\put(23,85){\makebox(0.00,0.00)[br]{$\mu^2_2(ab,c)$}}
%
\put(90.00,60.00){\makebox(0.00,0.00)[t]{$K^2_3$}}
\put(-3,-3){\makebox(0.00,0.00)[rt]{$(\D(a)\D(b))\D(c)$}}
\put(-4,60.00){\makebox(0.00,0.00)[r]{$\D(ab)\D(c)$}}
\put(47,113){\makebox(0.00,0.00)[br]{$\D((ab)c)$}}
\put(133.00,113.00){\makebox(0.00,0.00)[bl]{$\D(a(bc))$}}
\put(186,60.00){\makebox(0.00,0.00)[l]{$\D(a)\D(bc)$}}
\put(183,-3){\makebox(0.00,0.00)[tl]{$\D(a)(\D(b)\D(c))$}}
\put(4.00,4.00){\makebox(0.00,0.00)[bl]{$L$}}
\put(176,4.00){\makebox(0.00,0.00)[br]{$R$}}
\put(0.00,60.00){\makebox(0.00,0.00){$\bullet$}}
\put(0.00,0.00){\makebox(0.00,0.00){$\bullet$}}
\put(180.00,0.00){\makebox(0.00,0.00){$\bullet$}}
\put(180.00,60.00){\makebox(0.00,0.00){$\bullet$}}
\put(130.00,110.00){\makebox(0.00,0.00){$\bullet$}}
\put(50.00,110.00){\makebox(0.00,0.00){$\bullet$}}
\put(180.00,0.00){\line(-1,0){180.00}}
\put(180.00,60.00){\line(0,-1){60.00}}
\put(130.00,110.00){\line(1,-1){50.00}}
\put(50.00,110.00){\line(1,0){80.00}}
\put(0.00,60.00){\line(1,1){50.00}}
\put(0.00,0.00){\line(0,1){60.00}}
\end{picture}}
\end{equation}
All products labelling adjacent vertices except the two bottom ones
are homotopic via
an `algebraic' homotopy, i.e.~a homotopy constructed using
$\D$, $\mu^1_3$, $\mu^2_2$, and the multiplication induced by $\mu$ on
the powers of $V$. 

Let us inspect the vertices $L$ and $R$.  The `obvious' candidate
$\mu^1_3\big(\D(a),\D(b),\D(c)\big)$ for
the~connecting homotopy 
does not have any meaning. The labels of these vertices are, however,  still 
homotopic but in an unexpected manner.
For $a,b,c \in V$ define $X(a,b,c) \in \otexp V2$~by
\[
X(a,b,c):=
\big(\mu(\id \ot \mu) \ot \mu(\mu \ot \id)\big) \sigma\Fuk32 
\big(\D(a) \ot \D(b) \ot \D(c)\big)
\]
where $\sigma\Fuk32 : \otexp V6\to \otexp V6$ is the permutation
acting on $v_1,\ldots,v_6 \in V$ as
\[
\sigma\Fuk32 (v_1 \ot v_2 \ot v_3 \ot v_4 \ot v_5 \ot v_6):= 
 (v_1 \ot v_3 \ot v_5 \ot v_2 \ot v_4 \ot v_6).
\]
Similarly, put 
\[
Y(a,b,c):=
\big(\mu(\mu \ot \id) \ot \mu(\id \ot \mu)\big) \sigma\Fuk32 
\big(\D(a) \ot \D(b) \ot \D(c)\big).
\]
Define furthermore the homotopies $H_l,H_r,G_l,G_r : \otexp V3 \to
\otexp V2$ by the formulas
\begin{align*}
H_l(a,b,c) :&=    \big(\mu^1_3 \ot \mu(\mu \ot \id)\big) \sigma\Fuk32 
\big(\D(a) \ot \D(b) \ot \D(c)\big),
\\
H_r(a,b,c) :&= \big(\mu(\id \ot \mu) \ot \mu^1_3\big) \sigma\Fuk32 
\big(\D(a) \ot \D(b) \ot \D(c)\big),
\\
G_l(a,b,c): &= \big(\mu(\mu \ot \id) \ot \mu^1_3\big) \sigma\Fuk32 
\big(\D(a) \ot \D(b) \ot \D(c)\big), \ \mbox { and }
\\
G_r(a,b,c): &= \big(\mu^1_3\ot \mu(\id \ot \mu)\big) \sigma\Fuk32 
\big(\D(a) \ot \D(b) \ot \D(c)\big).
\end{align*}
Observing that
\begin{align*}
\big(\D(a)\D(b)\big)\D(c) &= \big(\mu(\mu \ot \id) \ot\mu(\mu \ot \id)\big) \sigma\Fuk32 
\big(\D(a) \ot \D(b) \ot \D(c)\big), \mbox { and }
\\
\D(a)\big(\D(b)\D(c)\big) &= \big(\mu(\id \ot \mu)\ot\mu(\id \ot\mu)\big) \sigma\Fuk32 
\big(\D(a) \ot \D(b) \ot \D(c)\big),
\end{align*}
we see the following composite chain of homotopies
\[
\raisebox{-.3cm}{}
{
\unitlength=2pt
\begin{picture}(130.00,9)(0.00,0.00)
\thicklines
\put(95.00,2.00){\makebox(0.00,0.00)[b]{$H_r(a,b,c)$}}
\put(25.00,2.00){\makebox(0.00,0.00)[b]{$H_l(a,b,c)$}}
\put(123.00,0){\makebox(0.00,0.00)[l]{$\D(a)(\D(b)\D(c))$}}
\put(60.00,2.00){\makebox(0.00,0.00)[b]{$X(a,b,c)$}}
\put(-3,0){\makebox(0.00,0.00)[r]{$(\D(a)\D(b))\D(c)$}}
\put(0.00,0.00){\line(1,0){120}}
\put(120.00,0.00){\makebox(0.00,0.00){$\bullet$}}
\put(60.00,0.00){\makebox(0.00,0.00){$\bullet$}}
\put(0.00,0.00){\makebox(0.00,0.00){$\bullet$}}
\end{picture}}
\]
and also 
\[
\raisebox{-.3cm}{}
{
\unitlength=2pt
\begin{picture}(130.00,9)(0.00,0.00)
\thicklines
\put(95.00,2.00){\makebox(0.00,0.00)[b]{$G_r(a,b,c)$}}
\put(25.00,2.00){\makebox(0.00,0.00)[b]{$G_l(a,b,c)$}}
\put(123.00,0){\makebox(0.00,0.00)[l]{\hphantom.$\D(a)(\D(b)\D(c))$.}}
\put(60.00,2.00){\makebox(0.00,0.00)[b]{$Y(a,b,c)$}}
\put(-3,0){\makebox(0.00,0.00)[r]{$(\D(a)\D(b))\D(c)$}}
\put(0.00,0.00){\line(1,0){120}}
\put(120.00,0.00){\makebox(0.00,0.00){$\bullet$}}
\put(60.00,0.00){\makebox(0.00,0.00){$\bullet$}}
\put(0.00,0.00){\makebox(0.00,0.00){$\bullet$}}
\end{picture}}.
\]

To proceed as in the case of the associahedron, we need to subdivide
the bottom edge of the hexagon $K^2_3$ in~(\ref{Kdy_zas_s_Jarunkou?}) 
and consider the heptagon $\KK^2_3$
\begin{equation}
\raisebox{-2.7cm}{}
\label{Jarka_zas_vazne_nemocna?}
{
\unitlength=1.000000pt
\begin{picture}(180.00,72)(0.00,58)
\thicklines
\put(4.00,4.00){\makebox(0.00,0.00)[bl]{$L$}}
\put(176,4.00){\makebox(0.00,0.00)[br]{$R$}}
\put(145,-6){\makebox(0.00,0.00)[t]{$H_r(a,b,c)$}}
\put(35,-6){\makebox(0.00,0.00)[t]{$H_l(a,b,c)$}}
\put(90.00,116){\makebox(0.00,0.00)[b]{$\D(\mu^1_3(a,b,c))$}}
\put(158,85){\makebox(0.00,0.00)[bl]{$\mu^2_2(a,bc)$}}
\put(186,30){\makebox(0.00,0.00)[l]{$\D(a)\mu^2_2(b,c)$}}
\put(-3,30){\makebox(0.00,0.00)[r]{$\mu^2_2(a,b)\D(c)$}}
\put(23,85){\makebox(0.00,0.00)[br]{$\mu^2_2(ab,c)$}}
\put(90.00,-3.00){\makebox(0.00,0.00)[t]{$X(a,b,c)$}}
\put(90.00,60.00){\makebox(0.00,0.00)[t]{$\KK^2_3$}}
\put(-3,-3){\makebox(0.00,0.00)[rt]{$(\D(a)\D(b))\D(c)$}}
\put(-4,60.00){\makebox(0.00,0.00)[r]{$\D(ab)\D(c)$}}
\put(47,113){\makebox(0.00,0.00)[br]{$\D((ab)c)$}}
\put(133.00,113.00){\makebox(0.00,0.00)[bl]{$\D(a(bc))$}}
\put(186,60.00){\makebox(0.00,0.00)[l]{$\D(a)\D(bc)$}}
\put(183,-3){\makebox(0.00,0.00)[tl]{$\D(a)(\D(b)\D(c))$}}
\put(90.00,0.00){\makebox(0.00,0.00){$\bullet$}}
\put(0.00,60.00){\makebox(0.00,0.00){$\bullet$}}
\put(0.00,0.00){\makebox(0.00,0.00){$\bullet$}}
\put(180.00,0.00){\makebox(0.00,0.00){$\bullet$}}
\put(180.00,60.00){\makebox(0.00,0.00){$\bullet$}}
\put(130.00,110.00){\makebox(0.00,0.00){$\bullet$}}
\put(50.00,110.00){\makebox(0.00,0.00){$\bullet$}}
\put(180.00,0.00){\line(-1,0){180.00}}
\put(180.00,60.00){\line(0,-1){60.00}}
\put(130.00,110.00){\line(1,-1){50.00}}
\put(50.00,110.00){\line(1,0){80.00}}
\put(0.00,60.00){\line(1,1){50.00}}
\put(0.00,0.00){\line(0,1){60.00}}
\end{picture}}
\end{equation}
Observe that the subdivision and therefore also $\KK^2_3$ 
is {\em not unique\/}, we could as well take
$Y,G_l,G_r$ instead of $X,H_l,H_r$. Notice also that {\em neither\/} the
expressions $X$, $Y$ {\em nor\/} the homotopies $H_l,H_r,G_l,G_r$ are
{\em algebraic\/}.

\noindent 
{\bf Two types of biassociahedra.}
We can already glimpse the following pattern. There naturally appear 
polytopes $K^\en_\me$, $m,n \in \N$,
such that $K^1_\me$ and $K^\me_1$ are isomorphic to Stasheff's
associahedron $K_\me$.
We will also see that $K^2_\me$ is
isomorphic to the multiplihedron $J_\me$. 
 We call these polytopes the {\em
step-one biassociahedra\/}. 
In this paper we give a simple and clean description of their face
posets.

To continue as in the case of $A_\infty$-algebras, 
one however needs to subdivide some faces of $K^\en_\me$; and example of
such a subdivision is the heptagon $\KK^2_3$
in~(\ref{Jarka_zas_vazne_nemocna?}) subdividing the hexagon $K^2_3$. 
The subdivisions must be compatible so that the result will be a
cellular PROP $\KK = \{\KK^\en_\me\}$. 
Its associated cellular chain complex is moreover required to be isomorphic
to the minimal model $\Binftyop$ of the bialgebra PROP. 
We call these polyhedra {\em step-two biassociahedra\/}.

The polytopes $\KK^\en_\me$ were, for $m+n \leq 6$, constructed
in~\cite{markl:ba}. In higher dimensions, the issue of the
compatibility of the subdivisions arises.
In~\cite{sanenlidze-umble:HHA11}, a construction of the step-two
biassociahedra was proposed, but we admit that we were not able to
verify it. By our opinion, a reasonably simple construction of the
polyhedral PROP $\KK$ or at least a convincing proof of its existence
still remains a challenge.

We think that a necessary starting point to address the above problems is
a suitable notation. In this paper we give  a simple
description of the face poset of the step-one biassociahedron 
$K^\en_\me$ that generalize the classical
description of the Stasheff associahedron in terms of planar directed
trees. We will also give a
`coordinate-free' characterization of $K^\en_\me$
which shows that it is not a human
invention but has existed since the beginning of time.  
As a~by-product of our approach, it will be obvious that $K^2_\me$ is
isomorphic to the multiplihedron $J_\me$, for each~\hbox{$\me \geq 2$}.

\noindent 
{\bf Notation and terminology.} 
Some low-dimensional examples  of the step-two biassociahedra appeared
for the first time, without explicit name, in \cite{markl:ba}; they
were denoted $B^\en_\me$ there. The word {\em biassociahedron\/} was used
by S.~Saneblidze and R.~Umble,  see
e.g.~\cite{sanenlidze-umble:HHA11}, 
referring to what we called above the step-two
biassociahedron; they denoted it $\KK_{m,n}$. 
Step-one biassociahedra can also be, without explicit name,
found in \cite{sanenlidze-umble:HHA11}; they were denoted $K_{m,n}$ there. 
Whenever we mention the
biassociahedron in this paper, we always mean the 
{\em step-one\/} biassociahedron which we denote by $K^\en_\me$.

\noindent
{\bf Acknowledgment.} I would like to express my gratitude to Samson
Saneblidze, Jim Stasheff, Ron Umble and the anonymous referee 
for reading the manuscript and offering
helpful remarks and suggestions. I also enjoyed the wonderful
atmosphere in the Max-Planck Institut
f\"ur Matematik in Bonn 
during the period when the first draft of this paper was
completed.

\section*{Main results}

Let us recall some standard
facts~\cite{markl-shnider-stasheff:book,tonks97}.  The {\em
permutahedron\/}\footnote{Sometimes also spelled
permut\underline{o}hedron.} $P_{\me-1}$ is, for $\me \geq 2$, a convex
polytope whose poset of faces $\EP_{\me-1}$ is isomorphic to the set
$\lT_\me$ of planar directed trees with levels and $\me$ leaves, with the
partial order generated by identifying adjacent levels. The
permutahedron $P_{\me-1}$ can be realized
as the convex hull of the vectors obtained by permuting the
coordinates of $(1,\ldots,\me-1) \in {\mathbb R}^{\me-1}$, its vertices
correspond to elements of the symmetric group $\Sigma_{\me-1}$.  The
face poset $\EK_\me$ of the Stasheff's {\em associahedron\/} $K_\me$ is
the set of directed planar trees (no levels) $T_\me$ with $\me$ leaves;
the partial order is given by contracting the internal edges.  The
obvious epimorphism $\varpi_\me : lT_\me \epi T_\me$ erasing the levels
induces the {\em Tonks projection} $\Tonks : \EP_{\me-1} \epi \EK_\me$ of
the face posets, see~\cite{tonks97} for details.

There is a conceptual explanation of Tonks' projection that uses
a natural map 
\begin{equation}
\label{eq:1}
\varpi_\me : \lT_\me \to \free(\xi_2,\xi_3,\ldots)(\me)
\end{equation} 
to the arity $\me$ piece of the free non-$\Sigma$ operad 
\cite[Section~4]{markl:handbook} 
generated by the operations $\xi_2,\xi_3,\ldots$ of arities
$2,3,\ldots$, respectively. The map $\varpi_\me$, roughly speaking,
replaces the vertices of a tree $T \in \lT_\me$ with the generators of
$\free(\xi_2,\xi_3,\ldots)$ whose arities equal the number of
inputs of the corresponding vertex, and then composes these generators 
using $T$ as the composition scheme, see~\S\ref{ted_umiram_na_plice}.

It is almost evident that the set
$T_\me$ is isomorphic to the image of $\varpi_\me$. In other words, the
face poset $\EK_\me$ of the associahedron $K_\me$ can be {\em defined\/} as the
quotient of $lT_{\me}$ modulo the equivalence that identifies elements
having the same image under $\varpi_\me$, with the induced \po. 
Tonks' projection then appears as the
epimorphism in the factorization
\[
{
\unitlength=0.09em
\begin{picture}(90.00,30.00)(0.00,0.00)
\put(-7.00,13.00){\makebox(0.00,0.00)[lb]{$\lT_\me\stackrel{\Tonks}{\epi} T_\me \hookrightarrow
\free(\xi_2,\xi_3,\ldots)$.}} 
\put(45.00,3.00){\makebox(0.00,0.00)[b]{\scriptsize $\varpi_\me$}} 
\put(90.00,0.00){\vector(0,1){10.00}}
\put(0.00,0.00){\line(1,0){90.00}}
\put(0.00,10.00){\line(0,-1){10.00}}
\end{picture}}
\] 

The aim of this note is to define in the same manner the poset
$\EK^\en_\me$ of faces of the step-one {\em biassociahedron\/} $K^\en_\me$ constructed
in \cite[\S 9.5]{sanenlidze-umble:HHA11}.\footnote{In
\cite{sanenlidze-umble:HHA11} it was denoted $K_{\me,\en}$.}  To
this end, we introduce in~\ref{vezmou_mi_kus_plice?}, for each $m,n
\geq 1$, the set $\lT^\en_\me$ of {\em complementary pairs\/} of directed
planar trees with levels. The set $\lT^\en_\me$ has a \po\ $<$ similar to
that of $\lT_\me$.  The poset $\EP^\en_\me = (\lT^\en_\me,<)$ provides a natural
indexing of the face poset of the {\em bipermutahedron\/} $P^\en_\me$ of
\cite{sanenlidze-umble:HHA11}.\footnote{Denoted $P_{m-1,n-1}$
in \cite{sanenlidze-umble:HHA11}.}  It turns out that the
posets $\EP^\en_\me$ and $\EP_{m+n-1}$ are isomorphic; we give a simple
proof of this fact in
Section~\ref{radeji_k_tem_doktorum_nechodit}. Comparing it with the
proof of the analogous~\cite[Proposition~6]{sanenlidze-umble:HHA11}
convincingly demonstrates the naturality of our language of
complementary pairs.

As the next step, we describe, for each $m,n \geq 1$, a natural map
\[
\varpi^\en_\me : \lT^\en_\me \to \free\big(\xi^b_a \, | \, a,b \geq 1,\ 
(a,b) \not = (1,1)\big).
\]   
The object in the right hand side is the free PROP
\cite[Section~8]{markl:handbook} generated by operations $\xi^b_a$ of
biarity $(b,a)$, i.e.~with $b$ outputs and $a$ inputs.  We then {\em define\/}
the face poset $\EK^\en_\me$ of the biassociahedron as $\lT^\en_\me$ modulo
the relation that identifies the complementary pairs of trees having
the same image under $\varpi^\en_\me$, with the induced \po.  We prove
that $\EK^\en_\me$ is isomorphic to the poset of complementary pairs of
planar directed trees $\zT^\en_\me$ {\em with zones\/}, see Definition~A
on page \pageref{sec:compl-pairs-with-2}.  It will be obvious that
this is the simplest possible description of the poset of faces of the
Saneblidze-Umble biassociahedron that generalizes the standard
description of the face poset of the Stasheff's associahedron.

In the last section, we analyze in detail the special case of
$\EK^2_\me$ when complementary pairs of trees with zones can
equivalently be described as trees with a {\em diaphragm\/}. Using
this description we
prove that $\EK^2_\me$ is isomorphic to the face poset of the
multiplihedron $J_\me$. 
Necessary facts about PROPs and calculus of fractions are recalled in
the Appendix.

The main definitions are {\bf Definition~A} on page
\pageref{sec:compl-pairs-with-2} and {\bf Definition~B\/} on
page~\pageref{sec:compl-pairs-with-1}.  The main result is {\bf
Theorem~C} on page~\pageref{zitra_do_Lnar?} and the main application
is {\bf Proposition~D} on page~\pageref{dnes_s_TK_Holter}.

\section{Trees with levels and (bi)permutahedra}
\label{radeji_k_tem_doktorum_nechodit}

\subsection{Up- and down-rooted trees}
Let us start by recalling some classical material from 
\cite[II.1.5]{markl-shnider-stasheff:book}. A~planar directed
(also called rooted) tree is a planar tree with a specified leg called
the {\em root\/}. The remaining legs are the {\em leaves\/}.
We will tacitly assume that all vertices have at least three adjacent
edges.

We will distinguish between {\em up-rooted\/} trees
whose all edges different from the root are oriented {\em
towards\/} the root while, in {\em down-rooted\/} trees, we orient the
edges to point {\em away\/} from the root.
The set of vertices $\vert(T)$ of an up- or down-rooted
tree $T$ is partially ordered by requiring that $u < v$ if and only if
there exist an oriented edge path starting at $u$ and ending at $v$.   

An up-rooted planar tree with $h$ {\em levels\/}, $h \geq 1$, is an
up-rooted planar tree $U$ with vertices placed at $h$ horizontal
lines numbered $1,\ldots,h$ from the top down.  More formally, an
up-rooted tree with $h$ levels is an up-rooted planar tree $U$
together with a strictly order-preserving {\em level function\/} $\ell
: \vert(U) \to \{1,\ldots,h\}$.\footnote{Strictly order-preserving
means that $v' < v''$ implies $\ell(v') < \ell(v'')$.} We tacitly
assume that the level function is an epimorphism (no `dummy' levels with
no vertices); if this is not the case, we say that $\ell$ is {\em
degenerate\/}. We believe that Figure~\ref{fig:1} clarifies these
notions. Since we numbered the level lines from the top down, saying
that vertex $v'$ lies {\em above\/} $v''$ means $\ell(v') < \ell(v'')$.

\begin{figure}
{
\unitlength=1.000000pt
\begin{picture}(200.00,120.00)(0.00,0.00)
\put(0,2.5){
\put(0.00,30.00){\makebox(0.00,0.00)[t]{\scriptsize $4$}}
\put(0.00,50.00){\makebox(0.00,0.00)[t]{\scriptsize $3$}}
\put(0.00,70.00){\makebox(0.00,0.00)[t]{\scriptsize $2$}}
\put(0.00,90.00){\makebox(0.00,0.00)[t]{\scriptsize $1$}}}
\put(10.00,30.00){\line(1,0){190.00}}
\put(10.00,50.00){\line(1,0){190.00}}
\put(10.00,70.00){\line(1,0){190.00}}
\put(10.00,90.00){\line(1,0){190.00}}
\put(100.00,0.00){\makebox(0.00,0.00)[t]{\scriptsize the leaves}}
\put(100.00,110.00){\makebox(0.00,0.00)[l]{\scriptsize root}}
\put(80.00,30.00){\makebox(0.00,0.00){$\bullet$}}
\put(130.00,30.00){\makebox(0.00,0.00){$\bullet$}}
\put(103.00,70.00){\makebox(0.00,0.00){$\bullet$}}
\put(60.00,50.00){\makebox(0.00,0.00){$\bullet$}}
\put(90.00,90.00){\makebox(0.00,0.00){$\bullet$}}
\thicklines
\put(103.00,70.00){\line(1,-6){10.00}}
\put(103.00,70.00){\vector(0,1){0.00}}
\put(183.00,10){\vector(-4,3){80.00}}
\put(90.00,90.00){\vector(0,1){30.00}}
\put(140.00,10.00){\vector(-1,2){10.00}}
\put(130.00,10.00){\vector(0,1){20.00}}
\put(70.00,10.00){\vector(1,2){10.00}}
\put(160.00,10.00){\vector(-3,2){30.00}}
\put(120.00,10.00){\vector(1,2){10.00}}
\put(130.00,30.00){\vector(-2,3){40.00}}
\put(130.00,30.00){\vector(-2,3){27.00}}
\put(60.00,10.00){\vector(0,1){40.00}}
\put(100.00,10.00){\vector(-1,1){40.00}}
\put(100.00,10.00){\vector(-1,1){20.00}}
\put(30.00,10.00){\vector(3,4){60.00}}
\put(30.00,10.00){\vector(3,4){30.00}}
\end{picture}}
\caption{\label{fig:1}An up-rooted 
tree with $10$ leaves and $5$ vertices aligned at $4$ levels, none of
  them `dummy.' The edges are oriented towards the root.}
\end{figure}
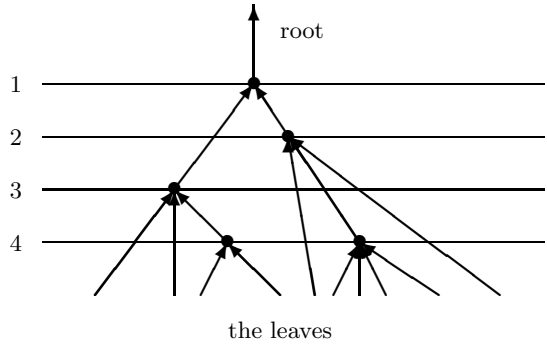

Let us denote by $\lT_\me$ the set of up-rooted trees with levels and
$\me$ leaves. It forms a~category whose morphisms $(U',\ell') \to
(U'',\ell'')$ are couples $(\phi,\hat \phi)$ consisting of a
map\footnote{By a map of trees we understand a sequence of
  contractions of internal edges. In particular, the root and
  leaves are fixed.} of
up-rooted planar trees $\phi : U' \to U''$ and of an order-preserving map
$\hat \phi : \{1,\ldots,h'\} \to \{1,\ldots,h''\}$ forming the commutative
diagram
\[
{
\unitlength=1.000000pt
\begin{picture}(110.00,60.00)(0,-5.00)
\thicklines
\put(100.00,30.00){\vector(0,-1){20.00}}
\put(10.00,30.00){\vector(0,-1){20.00}}
\put(38.00,0.00){\vector(1,0){30}}
\put(35.00,40.00){\vector(1,0){40.00}}
\put(104.00,20.00){\makebox(0.00,0.00)[l]{\scriptsize $\ell''$}}
\put(6.00,20.00){\makebox(0.00,0.00)[r]{\scriptsize $\ell'$}}
\put(50.00,5.00){\makebox(0.00,0.00)[b]{\scriptsize $\hat \phi$}}
\put(52.00,45.00){\makebox(0.00,0.00)[b]{\scriptsize $\phi$}}
\put(100.00,0.00){\makebox(0.00,0.00){$\{1,\ldots,h''\}$}}
\put(10.00,0.00){\makebox(0.00,0.00){$\{1,\ldots,h'\}$}}
\put(100.00,40.00){\makebox(0.00,0.00){$\vert(U'')$}}
\put(10.00,40.00){\makebox(0.00,0.00){$\vert(U')$}}
\end{picture}}
\]
in which we denote $\phi: U' \to U''$ and the induced map $\vert(U)
\to \vert(U'')$ by the same symbol.

We say that $(U',\ell') < (U'',\ell'')$ if there exists a morphism
$(U',\ell') \to (U'',\ell'')$.  Since all endomorphisms in $\lT_\me$
are the identities, the relation $<$ is a
partial order. The set $\lT_\me$ with this partial order is isomorphic to
the face poset $\EP_{\me-1}$ of the permutahedron $P_{\me-1}$. This result
is so classical that we will not give full details here,
see~\cite{tonks97}.\footnote{We however recall the correspondence between
  trees with levels and ordered partitions in~\S\ref{zas_na_mne_neco_leze}.}
 For $\me=4$, this
isomorphism is illustrated in Figure~\ref{fig:ab}.  

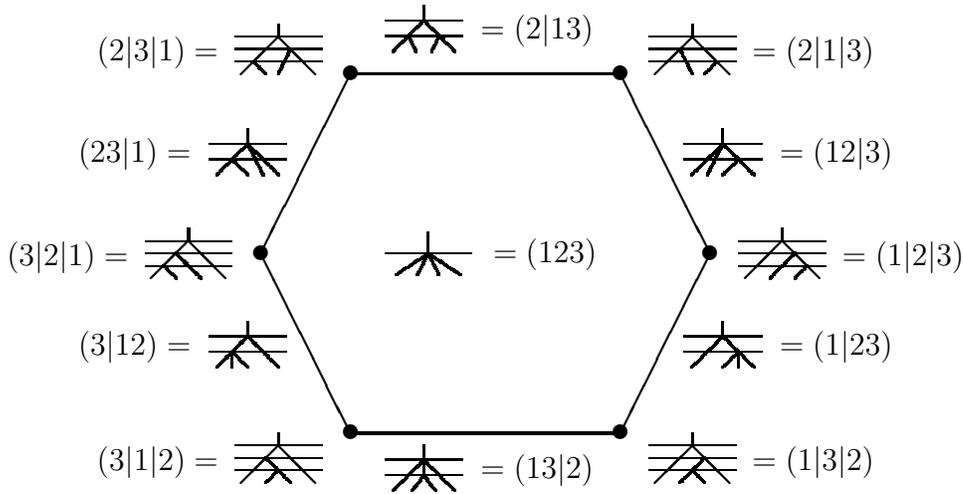
\begin{figure}
{
\unitlength=.85pt
\begin{picture}(200.00,210.00)(0.00,-10.00)
\put(185.00,60){\makebox(0.00,0.00)[tl]{$\ii\raisebox{5pt}{$\ =(1|23)$}$}}
\put(100.00,5.00){\makebox(0.00,0.00)[t]{$\nn\raisebox{5pt}{$\ =(13|2)$}$}}
\put(15.00,60.00){\makebox(0.00,0.00)[rt]{$\raisebox{5pt}{$(3|12)= \ $}\hh$}}
\put(15.00,125.00){\makebox(0.00,0.00)[br]{$\raisebox{5pt}{$(23|1)= \ $}\kk$}}
\put(100.00,180.00){\makebox(0.00,0.00)[b]{$\jj\raisebox{5pt}{$\ =(2|13)$}$}}
\put(185.00,125.00){\makebox(0.00,0.00)[lb]{$\ll\raisebox{5pt}{$\ =(12|3)$}$}}
\put(170.00,10.00){\makebox(0.00,0.00)[lt]{$\cc\raisebox{5pt}{$\ =(1|3|2)$}$}}
\put(30.00,10.00){\makebox(0.00,0.00)[tr]{$\raisebox{5pt}{$(3|1|2)=\ $}\ee$}}
\put(-10.00,90.00){\makebox(0.00,0.00)[r]{$\raisebox{5pt}{$(3|2|1)=\ $}\aa$}}
\put(30.00,170.00){\makebox(0.00,0.00)[rb]{$\raisebox{5pt}{$(2|3|1)=\ $} \ff$}}
\put(170.00,170.00){\makebox(0.00,0.00)[lb]{$\gg\raisebox{5pt}{$\ =(2|1|3)$}$}}
\put(210.00,90.00){\makebox(0.00,0.00)[l]{$\bb\raisebox{5pt}{$\ =(1|2|3)$}$}}
\put(100.00,90.00){\makebox(0.00,0.00){$\oo\raisebox{5pt}{$\ =(123)$}$}}
\put(160.00,10.00){\makebox(0.00,0.00){\large$\bullet$}}
\put(40.00,10.00){\makebox(0.00,0.00){\large$\bullet$}}
\put(0.00,90.00){\makebox(0.00,0.00){\large$\bullet$}}
\put(40.00,170.00){\makebox(0.00,0.00){\large$\bullet$}}
\put(160.00,170.00){\makebox(0.00,0.00){\large$\bullet$}}
\put(200.00,90.00){\makebox(0.00,0.00){\large$\bullet$}}
\thicklines
\put(160.00,10.00){\line(-1,0){10.00}}
\put(200.00,90.00){\line(-1,-2){40.00}}
\put(160.00,170.00){\line(1,-2){40.00}}
\put(150.00,170.00){\line(1,0){10.00}}
\put(40.00,170.00){\line(1,0){110.00}}
\put(40.00,10.00){\line(1,0){110.00}}
\put(0.00,90.00){\line(1,-2){40.00}}
\put(40.00,170.00){\line(-1,-2){40.00}}
\end{picture}}
\caption{
\label{fig:ab}
The faces of the permutahedron $P_3$ indexed by the set $\lT_4$. The
ordered partitions of $\{1,2,3\}$ 
corresponding to the faces under the
correspondence described in \S\ref{zas_na_mne_neco_leze} are also
shown.}
\end{figure}

The definition of the poset $(\lT^\en,<)$ of down-rooted trees with
levels and $\en$ leaves is similar.  We will also need the {\em
exceptional tree\/} $\except$ with one edge and no vertices. We define
$\lT_1 = \lT^1 := \{\except\}$.

\subsection{Complementary pairs}
\label{vezmou_mi_kus_plice?}
For $m,n \geq 1$ we denote by $\lT^\en_\me$ the set of all triples
$(U,D,\ell)$ consisting of an up-rooted planar tree $U$ with
$\me$ leaves and a down-rooted planar tree $D$ with $\en$ leaves, equipped
with a strictly order-preserving level function
\[
\ell :
\vert(U) \cup \vert(D)   \epi \{1,\ldots,h\}.
\]
Observe that if we denote $\ell_u:= \ell|_{\vert(U)}$ (resp.~$\ell_d:= 
\ell|_{\vert(D)}$), then $(U,\ell_u)$ (resp.~$(D,\ell_d)$) is an up-rooted
(resp.~down-rooted) rooted tree with possibly degenerate level function.

We call the objects $(U,D,\ell)$ 
the {\em complementary pairs\/} of trees
with levels.  Figure~\ref{figure35} explains the terminology, concrete
examples can be found in Figure~\ref{celeste}.
\begin{figure}
{
\unitlength=1.000000pt
\begin{picture}(180.00,100.00)(0.00,0.00)
\put(130.00,100.00){\makebox(0.00,0.00){\scriptsize $\en$ leaves}}
\put(60.00,0.00){\makebox(0.00,0.00){\scriptsize $\me$ leaves}}
\put(130.00,90.00){\makebox(0.00,0.00)[t]{$\cdots$}}
\put(60.00,10.00){\makebox(0.00,0.00)[b]{$\cdots$}}
\put(130.00,50.00){\makebox(0.00,0.00){$D$}}
\put(60.00,50.00){\makebox(0.00,0.00){$U$}}
\put(0.00,20.00){\makebox(0.00,0.00){\scriptsize $4$}}
\put(0.00,40.00){\makebox(0.00,0.00){\scriptsize $3$}}
\put(0.00,60.00){\makebox(0.00,0.00){\scriptsize $2$}}
\put(0.00,80.00){\makebox(0.00,0.00){\scriptsize $1$}}
\put(10.00,20.00){\line(1,0){170.00}}
\put(10.00,40.00){\line(1,0){170.00}}
\put(10.00,60.00){\line(1,0){170.00}}
\put(10.00,80.00){\line(1,0){170.00}}
\thicklines
\put(160.00,90.00){\line(0,-1){10.00}}
\put(110.00,90.00){\line(0,-1){10.00}}
\put(100.00,90.00){\line(0,-1){10.00}}
\put(90.00,10.00){\line(0,1){10.00}}
\put(40.00,10.00){\line(0,1){10.00}}
\put(30.00,10.00){\line(0,1){10.00}}
\put(130.00,0.00){\vector(0,1){20.00}}
\put(60.00,80.00){\vector(0,1){20.00}}
\put(160.00,80.00){\line(-1,0){60.00}}
\put(130.00,20.00){\line(1,2){30.00}}
\put(100.00,80.00){\line(1,-2){30.00}}
\put(90.00,20.00){\line(-1,2){30.00}}
\put(30.00,20.00){\line(1,0){60.00}}
\put(60.00,80.00){\line(-1,-2){30.00}}
\end{picture}}
\caption{\label{figure35}A schematic picture of a couple $(U,D) \in
  \lT^\en_\me$ of complementary trees with $4$ levels.}
\end{figure}
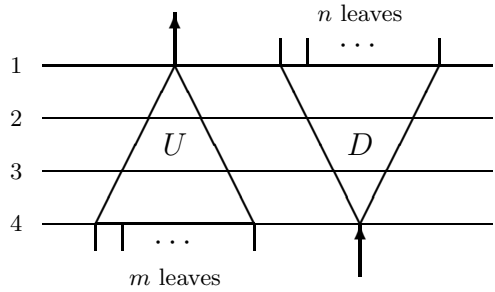
When the level function is clear from the context, we drop it from
the notation.  The set $\lT^\en_\me$ forms a category in the same way as $\lT_\me$. A
morphism
\[
\phi: (U',D',\ell') \to (U'',D'',\ell'')
\]
is a triple $(\phi_u,\phi_d,\hat \phi)$ consisting of a morphism
$\phi_u : U' \to U''$ (resp.~$\phi_d : D' \to D''$) of up-rooted
(resp.~down-rooted) planar trees and of an order-preserving map $\hat
\phi : \{1,\ldots,h'\} \to \{1,\ldots,h''\}$ such that the diagram
\begin{equation}
\label{eq:2}
\raisebox{-2em}{}
{
\unitlength=1.000000pt
\begin{picture}(110.00,35.00)(0,15)
\thicklines
\put(125.00,30.00){\vector(0,-1){20.00}}
\put(-20,30.00){\vector(0,-1){20.00}}
\put(10,0.00){\vector(1,0){81}}
\put(35.00,40.00){\vector(1,0){35.00}}
\put(132,20.00){\makebox(0.00,0.00)[l]{\scriptsize $\ell''$}}
\put(-26,20.00){\makebox(0.00,0.00)[r]{\scriptsize $\ell'$}}
\put(50.00,5.00){\makebox(0.00,0.00)[b]{\scriptsize $\hat \phi$}}
\put(52.00,45.00){\makebox(0.00,0.00)[b]{\scriptsize $\phi_u \cup \phi_d$}}
\put(125,0.00){\makebox(0.00,0.00){$\{1,\ldots,h''\}$}}
\put(-20,0.00){\makebox(0.00,0.00){$\{1,\ldots,h'\}$}}
\put(125,40.00){\makebox(0.00,0.00){$\vert(U'')\cup \vert(D'')$}}
\put(-20.00,40.00){\makebox(0.00,0.00){$\vert(U') \cup \vert(D')$}}
\end{picture}}
\end{equation} 
commutes. The \po\ of $\lT^\en_\me$, analogous to that of $\lT_\me$, is given by the
existence of a morphism in the above category.

\begin{theorem}
\label{Opet}
The posets $\EP_{m+n-2} = (\lT_{m+n-1},<)$ and $\EP^\en_\me = (\lT^\en_\me,<)$
are  naturally isomorphic for each $m,n \geq 1$.
\end{theorem}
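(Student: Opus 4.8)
The plan is to identify both posets---indeed both categories---with the combinatorial face poset of the permutahedron $P_{m+n-2}$, that is, with the set of ordered set partitions of an $(m+n-2)$-element set, ordered so that coarser partitions are larger (one moves up by merging runs of consecutive blocks). First I would invoke the classical dictionary recalled elsewhere in the paper: an up-rooted planar tree with levels and $k$ leaves is the same datum as an ordered partition of the $(k-1)$-element set of gaps between consecutive leaves, where the block containing a gap records the level at which the two leaves adjacent to that gap first diverge on the way to the root; under this dictionary a morphism of trees with levels (a sequence of edge contractions together with an order-preserving surjective level map $\hat\phi$) is exactly a merging of runs of consecutive blocks, so $(\lT_k,<)$ is isomorphic, as a category, to the poset of ordered partitions of a $(k-1)$-set. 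Applying this with $k=m+n-1$ settles the right-hand side, so the real content is the analogous statement for $\lT^\en_\me$.

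Next I would build the corresponding description of $\lT^\en_\me$. To a complementary pair $(U,D,\ell)$ with levels $\{1,\dots,h\}$ attach the ordered partition $B_1\,|\cdots|\,B_h$ of the disjoint union $G_U\sqcup G_D$ of the gap set $G_U$ of $U$ (of size $\me-1$) and the gap set $G_D$ of $D$ (of size $\en-1$), where $B_j$ is the set of gaps of $U$ diverging at level $j$ together with the gaps of $D$ diverging at level $j$; equivalently, take the ordered partitions attached to $U$ and to $D$ by the classical dictionary and shuffle them along the common level function $\ell$. Since $|G_U\sqcup G_D|=m+n-2$ this is an ordered partition of an $(m+n-2)$-set. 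The inverse is equally direct: an ordered partition $B_1\,|\cdots|\,B_h$ of $G_U\sqcup G_D$ restricts to an ordered partition of $G_U$ (the nonempty sets $B_j\cap G_U$, in order), hence to a tree $U$, and likewise to $D$, and the level function is recovered by sending a vertex of $U$ at its $t$-th level to the index of the $t$-th block $B_j$ meeting $G_U$, and similarly for $D$; the only point to check is that the reconstructed $\ell\colon\vert(U)\cup\vert(D)\epi\{1,\dots,h\}$ is onto, which holds precisely because each $B_j$ is nonempty. The degenerate cases $\me=1$ or $\en=1$, where $U$ or $D$ is the exceptional tree $\except$ with empty gap set, are immediate. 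This yields a bijection between $\lT^\en_\me$ and the ordered partitions of an $(m+n-2)$-set.

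Then I would verify that this bijection respects morphisms, so that it is an isomorphism of categories---which is the asserted \emph{natural} isomorphism, since the construction involves no arbitrary choice. Unwinding a morphism $(\phi_u,\phi_d,\hat\phi)$ of complementary pairs: $\phi_u$ coarsens the ordered partition attached to $U$, $\phi_d$ coarsens the one attached to $D$, and $\hat\phi$ forces these two coarsenings to merge only blocks lying over a common run of consecutive levels; taken together this is precisely a merging of runs of consecutive blocks of the shuffled ordered partition of $G_U\sqcup G_D$, and conversely any such merging restricts to compatible coarsenings of the two factors together with the induced level map, i.e.\ to a morphism in $\lT^\en_\me$. Composing the isomorphism of the previous paragraph with the classical one then gives $\EP^\en_\me\cong\EP_{m+n-2}$ as categories, hence as posets.

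The hard part will be this last matching of morphisms: one must track an edge contraction in $U$, an edge contraction in $D$, and a level merge $\hat\phi$ \emph{simultaneously}, and show that requiring the two tree-side coarsenings to take place over shared, consecutive levels is both necessary (strict order-preservation of $\ell$ forbids a vertex sitting below another on the same level) and sufficient to produce a well-formed coarsening of the combined ordered partition. Everything else is the classical permutahedron combinatorics applied to $U$ and to $D$ separately and glued along the level function $\ell$.
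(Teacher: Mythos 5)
Your proposal is correct, but it takes a genuinely different route from the paper. The paper proves Theorem~\ref{Opet} by a direct inductive construction: it builds an explicit isomorphism $\lT^\en_\me \cong \lT^{\en-1}_{\me+1}$ by amputating the leftmost leaf of $D$ at its initial vertex and grafting a new up-going leaf onto $U$ at the rightmost point where the corresponding level line meets $U$ (Figure~\ref{zase_umiram}), then chains these isomorphisms down to the base case $\lT^1_u \cong \lT^u_1 \cong \lT_u$. You instead route both sides through the poset of ordered partitions of an $(m+n-2)$-set: the classical gap/balloon dictionary for a single levelled tree, applied to $U$ and $D$ separately and shuffled along the common level function $\ell$. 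This is exactly the bipartition picture the paper develops in \S\ref{zas_na_mne_neco_leze} (and explicitly quarantines as ``not used later''), and indeed the paper poses describing the isomorphism of Theorem~\ref{Opet} in terms of bipartitions as an exercise; your proof essentially solves that exercise and runs the logic in reverse, deriving the theorem from the bipartition model rather than the other way around. The trade-off: the paper's argument stays entirely inside the tree language and produces a concrete geometric recipe for the isomorphism, at the cost of an induction and a somewhat informal ``we believe the figure makes it obvious'' verification; your argument makes the order-preservation check systematic (it reduces to the single-tree permutahedron dictionary twice, plus the observation that strict order-preservation of $\ell$ decomposes over $\vert(U)$ and $\vert(D)$ while surjectivity is the only joint constraint), but it imports the morphism-level form of that classical dictionary --- that consecutive-run coarsenings of ordered partitions correspond exactly to edge contractions plus level merges --- which the paper itself only cites to Tonks rather than proves. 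Your flagged ``hard part'' is handled correctly: comparable vertices whose levels become identified are necessarily adjacent (any intermediate vertex would need a level strictly between two consecutive ones), so merging adjacent levels is realized by contracting precisely the edges between them, and conversely commutativity of diagram~(\ref{eq:2}) forces the induced map on the shuffled partition to be the pushforward along the (necessarily surjective) order-preserving $\hat\phi$. The only loose ends are cosmetic: you should say explicitly that the edge cases $\me=1$ or $\en=1$ use $G_{\except}=\emptyset$, and note that surjectivity of $\hat\phi$, though not demanded in the definition of morphism, follows from surjectivity of $\ell''$ and of $\phi$ on vertices.
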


\begin{proof}
Let us describe an isomorphisms of the underlying sets.
Since clearly $\lT^1_u \cong \lT^u_1 \cong \lT_u$ for each $u\geq 1$, it is
enough to construct, for each $\en \geq 2$, $\me \geq 1$, an isomorphism
\begin{equation}
\label{eq:3}
\lT^\en_\me \cong \lT^{\en-1}_{\me+1}.
\end{equation}
Let $X = (U,D') \in \lT^\en_\me$.  Denote by $v$ the initial vertex of
the leftmost leaf of $D'$ and $L$ its level line.  Amputation of this
leftmost leaf at $v$ gives a down-rooted tree $D$ with $\en-1$ leaves.
Now extend the up-rooted tree $U$ by grafting an up-going leaf at the rightmost
point $w$ in which the level line $L$ intersects $U$. If $w$ is a vertex,
we graft the leaf at this vertex, if $w$ is a point of 
an edge, we introduce a new vertex with two input edges.

Let $U'$ denotes this extended tree. The isomorphism~(\ref{eq:3}) is
given by the correspondence $(U,D') \mapsto (U',D)$, with the pair
$(U',D)$ equipped with the level function induced, in the obvious
manner, by the level function of $(U,D')$.  We believe that
Figure~\ref{zase_umiram} makes isomorphism~(\ref{eq:3}) obvious.  It
would, of course, be possible to define it using the formal language
of trees with level functions, but we consider the above informal,
intuitive definition more satisfactory.
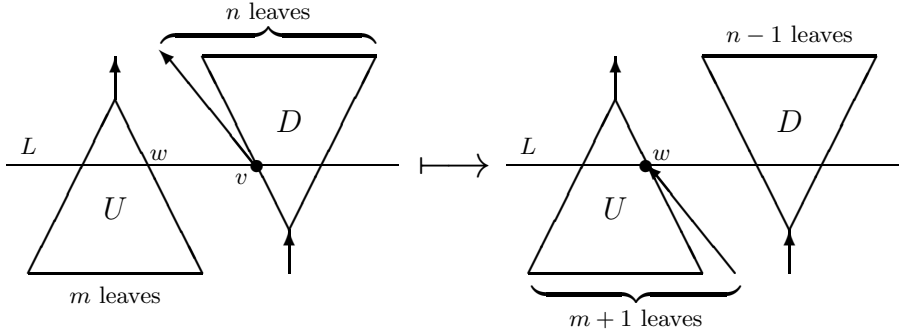
\begin{figure}
\begin{center}
{
\unitlength=.07em
\begin{picture}(410.00,120.00)(0.00,0.00)
\put(206.00,58.00){\makebox(0.00,0.00){\Large$\longmapsto$}}
\put(360.00,120.00){\makebox(0.00,0.00){\scriptsize $\en-1$ leaves}}
\put(118,115.00){\makebox(0.00,0.00)[b]{
$\overbrace{\rule{6.9em}{0mm}}^{\mbox{\scriptsize $\en$ leaves}}$}}
\put(287.00,5.00){\makebox(0.00,0.00)[t]{
$\underbrace{\rule{6.7em}{0mm}}_{\mbox{\scriptsize $\me+1$ leaves}}$}}
\put(50.00,0.00){\makebox(0.00,0.00){\scriptsize $\me$ leaves}}
\put(360.00,80.00){\makebox(0.00,0.00){$D$}}
\put(130.00,80.00){\makebox(0.00,0.00){$D$}}
\put(280.00,40.00){\makebox(0.00,0.00){$U$}}
\put(50.00,40.00){\makebox(0.00,0.00){$U$}}
\put(294,59){\makebox(0.00,0.00){$\bullet$}}
\put(297,62){\makebox(0.00,0.00)[lb]{\scriptsize $w$}}
\put(66,62){\makebox(0.00,0.00)[lb]{\scriptsize $w$}}
\put(111,56){\makebox(0.00,0.00)[tr]{\scriptsize $v$}}
\put(115,59){\makebox(0.00,0.00){$\bullet$}}
\thinlines
\put(230.00,60.00){\line(1,0){180.00}}
\put(10.00,65.00){\makebox(0.00,0.00)[b]{\scriptsize $L$}}
\put(240.00,65.00){\makebox(0.00,0.00)[b]{\scriptsize $L$}}
\put(0.00,60.00){\line(1,0){180.00}}
\thicklines
\put(335.00,10.00){\line(-4,5){40.00}}
\put(294,60){\vector(-1,1){0}}
\put(70,113.00){\vector(-1,1){0}}
\put(74.00,110.00){\line(4,-5){40.00}}
\put(390.00,110.00){\line(1,0){10.00}}
\put(400.00,110.00){\line(-1,-2){10.00}}
\put(320.00,110.00){\line(1,0){70.00}}
\put(360.00,30.00){\line(-1,2){40.00}}
\put(280.00,90){\vector(0,1){20.00}}
\put(360.00,30.00){\line(1,2){30.00}}
\put(360.00,10.00){\vector(0,1){20.00}}
\put(320.00,10.00){\line(-1,0){80.00}}
\put(280.00,90.00){\line(1,-2){40.00}}
\put(240.00,10.00){\line(1,2){40.00}}
\put(50.00,90){\vector(0,1){20.00}}
\put(170.00,110.00){\line(-1,0){80.00}}
\put(130.00,30.00){\line(1,2){40.00}}
\put(130.00,30.00){\line(-1,2){40.00}}
\put(130.00,10.00){\vector(0,1){20.00}}
\put(90.00,10.00){\line(-1,0){80.00}}
\put(50.00,90.00){\line(1,-2){40.00}}
\put(10.00,10.00){\line(1,2){40.00}}
\end{picture}}  
\end{center}
\caption{\label{zase_umiram}Isomorphism~(\ref{eq:3}).}
\end{figure}
It is simple to verify that~(\ref{eq:3}) preserves the partial orders,
giving rise to a poset isomorphism $\EP^\en_\me \cong \EP^{\en-1}_{\me+1}$,
for each $\en \geq 2$, $\me \geq 1$.
\end{proof}

\begin{Example}
The isomorphism  of Theorem~\ref{Opet} is, for $m+n = 4$,
presented in Figure~\ref{figt}.
\end{Example}

\begin{figure}
\begin{center}
{
\unitlength=0.03em
\begin{picture}(340.00,200.00)(0.00,30.00)
\put(220.00,30.00){\makebox(0.00,0.00){$>$}}
\put(225.00,110.00){\makebox(0.00,0.00){$>$}}

\put(100.00,30.00){\makebox(0.00,0.00){$<$}}
\put(95.00,110.00){\makebox(0.00,0.00){$<$}}

\put(130.00,30.00){\line(1,0){60.00}}

\thicklines
\put(160.00,190.00){\line(0,-1){30.00}}
\put(160.00,60.00){\line(0,-1){60.00}}

\put(30,0){
\thicklines
\qbezier(250.00,40.00)(260.00,50.00)(270.00,60.00)
\qbezier(270.00,20.00)(290.00,40.00)(310.00,60.00)
\qbezier(230.00,60.00)(250.00,40.00)(270.00,20.00)
\put(270.00,20.00){\line(0,-1){20.00}}
\thinlines
\put(230.00,20.00){\line(1,0){80.00}}
\put(230.00,40.00){\line(1,0){80.00}}
}
\qbezier(160.00,30.00)(170.00,40.00)(190.00,60.00)
\qbezier(130.00,60.00)(150.00,40.00)(160.00,30.00)
\thinlines
\put(130.00,190.00){\line(1,0){60.00}}
\put(130.00,110.00){\line(1,0){60.00}}
\put(-30,0){
\thicklines
\qbezier(50.00,60.00)(60.00,50.00)(70.00,40.00)
\qbezier(50.00,20.00)(70.00,40.00)(90.00,60.00)
\qbezier(10.00,60.00)(30.00,40.00)(50.00,20.00)
\put(50.00,20.00){\line(0,-1){20.00}}
\thinlines
\put(10.00,20.00){\line(1,0){80.00}}
\put(10.00,40.00){\line(1,0){80.00}}
}
\put(50,0){
\thicklines
\qbezier(300.00,100.00)(320.00,120.00)(340.00,140.00)
\qbezier(260.00,140.00)(280.00,120.00)(300.00,100.00)
\qbezier(240.00,120.00)(260.00,100.00)(280.00,80.00)
\qbezier(200.00,80.00)(220.00,100.00)(240.00,120.00)
\put(300.00,100.00){\line(0,-1){20.00}}
\put(240.00,140.00){\line(0,-1){20.00}}
\thinlines
\put(200.00,100.00){\line(1,0){140.00}}
\put(200.00,120.00){\line(1,0){140.00}}
}
\thicklines
\put(190.00,140.00){\line(-1,-1){60.00}}
\put(130.00,140.00){\line(1,-1){60.00}}

\put(-50,0){
\thicklines
\qbezier(80.00,120.00)(90.00,130.00)(100.00,140.00)
\qbezier(60.00,140.00)(70.00,130.00)(80.00,120.00)
\qbezier(20.00,100.00)(30.00,90.00)(40.00,80.00)
\qbezier(0.00,80.00)(10.00,90.00)(20.00,100.00)
\put(80.00,120.00){\line(0,-1){40.00}}
\put(20.00,140.00){\line(0,-1){40.00}}
\thinlines
\put(0.00,120.00){\line(1,0){100.00}}
\put(0.00,100.00){\line(1,0){100.00}}

}
\put(30,0){
\thicklines
\qbezier(290.00,180.00)(280.00,170.00)(270.00,160.00)
\put(270.00,220.00){\line(0,-1){20.00}}
\qbezier(270.00,200.00)(290.00,180.00)(310.00,160.00)
\qbezier(230.00,160.00)(260.00,190.00)(270.00,200.00)
\thinlines
\put(230.00,180.00){\line(1,0){80.00}}
\put(230.00,200.00){\line(1,0){80.00}}
}
\put(160.00,220.00){\line(0,-1){30.00}}
\thicklines
\qbezier(160.00,190.00)(180.00,170.00)(190.00,160.00)
\qbezier(130.00,160.00)(150.00,180.00)(160.00,190.00)
\put(100.00,190.00){\makebox(0.00,0.00){$<$}}
\put(220.00,190.00){\makebox(0.00,0.00){$>$}}
\put(-30,0){
\thinlines
\put(10.00,180.00){\line(1,0){90.00}}
\put(10.00,200.00){\line(1,0){90.00}}
\thicklines
\put(50.00,220.00){\line(0,-1){20.00}}
\qbezier(30.00,180.00)(40.00,170.00)(50.00,160.00)
\qbezier(50.00,200.00)(70.00,180.00)(90.00,160.00)
\qbezier(10.00,160.00)(30.00,180.00)(50.00,200.00)
}
\put(-70,190.00){\makebox(0.00,0.00)[r]{$\EP^1_3:$}}
\put(-70,110.00){\makebox(0.00,0.00)[r]{$\EP^2_2:$}}
\put(-70,30.00){\makebox(0.00,0.00)[r]{$\EP^3_1:$}}
\end{picture}}
\end{center}
\caption{\label{figt}The isomorphic posets $\EP_2 = \EP^1_3$, 
$\EP^2_2$ and $\EP^3_1$.}
\end{figure}

\begin{Example}
Isomorphism~(\ref{eq:3}) of complementary pairs is, for $m+n=5$,
shown in the table of Figure~\ref{celeste}.
\begin{figure}
\begin{center}
\fbox{Le banquet c\'eleste}
\\
\[
\def\arraystretch{2}
\begin{array}{|c|c|c|c||c|c|c|}
\hline
\mbox {$\lT_4 = \lT^1_4$}&\hbox{$\lT^2_3$}&\hbox{$\lT^3_2$}&
\mbox {$\lT^4 = \lT_1^4$}&\mbox
      {$\varpi(\lT^2_3)$}&\mbox{$\pi(\lT^2_3)$}&\mbox{$\pT_3$}

\cr
\hline
\rule{0pt}{2em}
\aa & \uuuu & \uuuinvtot & \bbup &
\raiseB{
\def\arraystretch{0}
\begin{array}{cc}
\gen21\\  \ZbbZb
\end{array}}
&\uuuucut&\painteduuuucut
\cr
\ff&\uuuua&\uuuinvtotmod&\ccup &\raiseB{\frac{\gen12\gen12}{\dvojiteypsilon\gen21}}&\uuuuacut&\painteduuuuacut
\cr
\gg&\ffff&\iuert&\eeup&\raiseB{\frac{\ZbbZb \ZbbZb}{\raisebox{.8em}{}\gen21 \gen21 \gen21}}&\ffffcut&\paintedffffcut
\cr
\bb&\ffffmod&\uytr&\aaup&\raiseB{\frac{\bZbbZ \bZbbZ}{\raisebox{.8em}{}\gen21 \gen21 \gen21}}&\ffffcutmod&\paintedffffcutmod
\cr
\cc&\uuuuamod&\uytrmod&\ffinv&\raiseB{\frac{\gen12 \gen12}{\gen21
    \dvojiteypsilon}}&\uuuuacutmod &\painteduuuuacutmod 
\cr
\ee&\uuuumod&\iuertmod&\gginv&\raiseB{
\def\arraystretch{0}
\begin{array}{cc}
\gen21\\  \bZbbZ
\end{array}}&\uuuucutmod&\painteduuuucutmod
\cr
\hline
\rule{0pt}{2em}
\kk&\asc&\xyt&\iiinv&\raiseB{\dvaZbbZb}&\uuuucutcut&\painteduuuucutcut
\cr
\jj&\ssss&\tttt&\nninv&\raiseB{\frac{\gen12 \gen12}{\raisebox{.8em}{}\dvadva \gen21}}&\uuuucutcutmod&\painteduuuucutcutmod
\cr
\ll&\poiu&\krtecek&\hhinv&\raiseB{\frac{\gen13 \hskip
    .5em\gen13}{\raisebox{.8em}{}\gen21\gen21\gen21}}&\poiucut&\paintedpoiucut
\cr
\ii&\ssssmod&\krtecekmod&\kkinv&\raiseB{\frac{\gen12 \gen12}{\raisebox{.8em}{}\gen21 \dvadva}}&\uuuucutcutmodmod&\painteduuuucutcutmodmod
\cr
\nn&\ascmodq&\krtecekmodb&\jjinv&\raiseB{\dvabZbbZ}
&\uuuucutcutmodmodmod&\painteduuuucutcutmodmodmod 
\cr
\hh&\asci&\ttttmod&\llinv&\raiseB{
\def\arraystretch{0}
\begin{array}{cc}
\gen21\\  \gen13
\end{array}}&\ascicut&\paintedascicut
\cr
\hline
\rule{0pt}{2em}
\oo&\ooooa&\oooob&\ooinv&\raiseB\dvacarkatri&\ascicutmod&\paindedascicutmod
\cr
\hline
\end{array}
\]
\end{center}
\caption{\label{celeste}The isomorphic sets $\lT^\en_\me$ with $m+n = 5$,
$\varpi(\lT^2_3)$, $\zT^2_3$ and $\pT_3$. The upper section of the
  left part corresponds to the
  vertices, the middle to the edges, and the bottom row to the
  $2$-cell of the bipermutahedron $P_3 = P^1_4 
\cong P^2_3 \cong P^3_2 \cong P^4_1$.}
\end{figure}
Comparing the entries in the leftmost column with the corresponding
entries of the 4th one, we see a nontrivial isomorphism between
the posets of up- and down-rooted trees.  We suggest, as an exercise,
to decorate the faces of the permutahedron $P_4$ in
Figure~\ref{fig:ab} by the corresponding entries of the table in
Figure~\ref{celeste} on page~\pageref{celeste} to verify in this particular case that the
partial orders are indeed preserved.
\end{Example}

\subsection{Relation to the standard permutahedron.} 
\label{zas_na_mne_neco_leze}
In this subsection we recall the well-known isomorphism between the
poset $\EP_\me = (\lT_\me,<)$ of rooted planar trees with levels and the
poset of ordered decompositions of the set $\{1,\ldots,\me-1\}$ which
we denote by $\SP_\me = (\Dec_\me,<)$ (the ${\EuScript S}$ in front of
$\EP$ abbreviating the {\em standard\/} permutahedron). This
isomorphism, which forms a necessary link
to~\cite{sanenlidze-umble:HHA11} and related papers, extends to an
isomorphism between $(\EP^\en_\me,<)$ and the poset of ordered {\em
bi\/}partitions $\SP^\en_\me = (\Dec^\en_\me,<)$. As the gadgets described
here will not be used later in our note, this subsections can be
safely skipped.

We start by drawing a tree $U \in \lT_\me$, as always in this note, with
the root up, and labelling the intervals between its leaves, from the
left to the right, by \hbox{$1,\ldots,\me-1$}. We then replace the
labels by party balloons, release them and let them lift to the highest possible
level.\footnote{Alternatively, replace the labels by balls and
change the direction of gravity.}  The first set of the corresponding
partition is formed by the balloons that lifted to the root
(level~$1$), the second by the balloons that lifted to level~$2$,
\&c. For instance, to the tree $U \in \lT_8$ in Figure~\ref{fig:1bis} one
associates the ordered decomposition $(4|57|12|36) \in
\Dec_8$. Another instance of the above correspondence is shown in
Figure~\ref{fig:ab}.
\begin{figure}
{
\unitlength=1.000000pt
\begin{picture}(200.00,120.00)(0.00,0.00)
\put(0.00,30.00){\makebox(0.00,0.00)[t]{\scriptsize $4$}}
\put(0.00,50.00){\makebox(0.00,0.00)[t]{\scriptsize $3$}}
\put(0.00,70.00){\makebox(0.00,0.00)[t]{\scriptsize $2$}}
\put(0.00,90.00){\makebox(0.00,0.00)[t]{\scriptsize $1$}}
\put(10.00,30.00){\line(1,0){190.00}}
\put(10.00,50.00){\line(1,0){190.00}}
\put(10.00,70.00){\line(1,0){190.00}}
\put(10.00,90.00){\line(1,0){190.00}}
\put(45.00,10.00){\makebox(0.00,0.00)[t]{\baloon1}}
\put(65.00,10.00){\makebox(0.00,0.00)[t]{\baloon2}}
\put(85.00,10.00){\makebox(0.00,0.00)[t]{\baloon3}}
\put(105.00,10.00){\makebox(0.00,0.00)[t]{\baloon4}}
\put(122.00,10.00){\makebox(0.00,0.00)[t]{\baloon5}}
\put(142.00,10.00){\makebox(0.00,0.00)[t]{\baloon6}}
\put(172.00,10.00){\makebox(0.00,0.00)[t]{\baloon7}}
\put(80.00,30.00){\makebox(0.00,0.00){$\bullet$}}
\put(130.00,30.00){\makebox(0.00,0.00){$\bullet$}}
\put(103.00,70.00){\makebox(0.00,0.00){$\bullet$}}
\put(60.00,50.00){\makebox(0.00,0.00){$\bullet$}}
\put(90.00,90.00){\makebox(0.00,0.00){$\bullet$}}
\thicklines
\put(103.00,70.00){\line(1,-6){10.00}}
\put(103.00,70.00){\line(0,1){0.00}}
\put(183.00,10){\line(-4,3){80.00}}
\put(90.00,90.00){\line(0,1){30.00}}
\put(130.00,10.00){\line(0,1){20.00}}
\put(70.00,10.00){\line(1,2){10.00}}
\put(160.00,10.00){\line(-3,2){30.00}}
\put(130.00,30.00){\line(-2,3){40.00}}
\put(130.00,30.00){\line(-2,3){27.00}}
\put(60.00,10.00){\line(0,1){40.00}}
\put(100.00,10.00){\line(-1,1){40.00}}
\put(100.00,10.00){\line(-1,1){20.00}}
\put(30.00,10.00){\line(3,4){60.00}}
\put(30.00,10.00){\line(3,4){30.00}}
\end{picture}}
\caption{\label{fig:1bis}An up-rooted 
tree $U \in \lT_8$ with party balloons ready to take off.}
\end{figure}
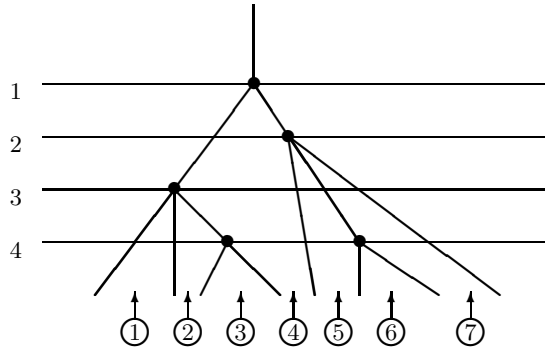

Let us denote the resulting isomorphism by $\gamma : \EP_\me \cong
\SP_\me$. Combined with the isomorphisms of Theorem~\ref{Opet},
it leads to an isomorphism (denoted by the same symbol) $\gamma : \EP^\en_\me
\cong \SP_{m+m-2}$ for each $m,n \geq 1$.

\begin{Example}
A particular instance of the above isomorphism is $\gamma: \EP^\me \cong
\SP_\me$. On the other hand the posets $\EP_\me = (\lT_\me,<)$ and $\EP^\me =
(\lT^\me,<)$ are in fact the {\em same\/}, it is only that we draw the
trees in $\EP_\me$ with the root up, and those in $\EP^\me$ 
with the root down.  One can prove
that the composite
\begin{equation}
\label{eq:18}
\tau: \SP_\me \stackrel{\gamma^{-1}}\cong \EP_\me
= \EP^\me \stackrel\gamma\cong \SP_\me
\end{equation} 
is given by reversing the order of the members of the decomposition. For
instance, 
\[
\tau(4|57|12|36) = (36|12|57|4).
\]
\end{Example}

Let us extend the above isomorphism to complementary pairs of trees. 
For $m,n \geq 1$, denote by $\SP^\en_\me = (\Dec^\en_\me,<)$ the poset of
ordered {\em bipartitions\/} of the sets
\hbox{$\{1,\ldots,\me-1\}$}, \hbox{$\{\me,\ldots,m+n-2\}$}, by which we mean~arrays
\[
\left(\slp{D_\ell}{U_1}\slp{D_{\ell-1}}{U_2} \cdots 
\slp{D_2}{U_{\ell-1}}\slp {D_1}{U_\ell} \right),
\]
where $U_1,U_2,\ldots,U_\ell$ (resp.~$D_1,D_2,\ldots,D_\ell)$ is an
ordered partition of $1,\ldots,\me-1$ (resp.~an ordered partition of
$\me,\ldots,\hbox{$m+n-2$}$). Here we allow some of the sets $U_j$
(resp.~$D_j$) to be empty, but we require $U_j \cup D_j \not= \emptyset$
for each $1\leq j \leq \ell$. 

We associate to a complementary pair $X = (U,D) \in \lT^\en_\me$ a
bipartition in $\Dec^\en_\me$ as follows. We attach to the intervals
between the leaves of the up-rooted tree $U$ balloons labeled
$1,\ldots,\me-1$, and to the intervals between the leaves of the
down-rooted tree $D$ balls labelled $\me,\ldots,m+n-2$ as
indicated in Figure~\ref{figure35bis}.
\begin{figure}
{
\unitlength=1.000000pt
\begin{picture}(180.00,100.00)(0.00,0.00)
\put(105,100.00){\makebox(0.00,0.00){$\ball \me$}}
\put(117,100.00){\makebox(0.00,0.00){$\ball {}$}}
\put(155,100.00){\makebox(0.00,0.00){$\ball {}$}}
\put(35,0.00){\makebox(0.00,0.00){$\baloon1$}}
\put(47,0.00){\makebox(0.00,0.00){$\baloon2$}}
\put(85,0.00){\makebox(0.00,0.00){$\baloon{}$}}
\put(130.00,90.00){\makebox(0.00,0.00)[t]{$\cdots$}}
\put(65.00,10.00){\makebox(0.00,0.00)[b]{$\cdots$}}
\put(130.00,50.00){\makebox(0.00,0.00){$D$}}
\put(60.00,50.00){\makebox(0.00,0.00){$U$}}
\put(0.00,20.00){\makebox(0.00,0.00){\scriptsize $4$}}
\put(0.00,40.00){\makebox(0.00,0.00){\scriptsize $3$}}
\put(0.00,60.00){\makebox(0.00,0.00){\scriptsize $2$}}
\put(0.00,80.00){\makebox(0.00,0.00){\scriptsize $1$}}
\put(10.00,20.00){\line(1,0){170.00}}
\put(10.00,40.00){\line(1,0){170.00}}
\put(10.00,60.00){\line(1,0){170.00}}
\put(10.00,80.00){\line(1,0){170.00}}
\thicklines
\put(160.00,90.00){\line(0,-1){10.00}}
\put(110.00,90.00){\line(0,-1){10.00}}
\put(100.00,90.00){\line(0,-1){10.00}}
\put(90.00,10.00){\line(0,1){10.00}}
\put(40.00,10.00){\line(0,1){10.00}}
\put(30.00,10.00){\line(0,1){10.00}}
\put(130.00,0.00){\vector(0,1){20.00}}
\put(60.00,80.00){\vector(0,1){20.00}}
\put(160.00,80.00){\line(-1,0){60.00}}
\put(130.00,20.00){\line(1,2){30.00}}
\put(100.00,80.00){\line(1,-2){30.00}}
\put(90.00,20.00){\line(-1,2){30.00}}
\put(30.00,20.00){\line(1,0){60.00}}
\put(60.00,80.00){\line(-1,-2){30.00}}
\end{picture}}
\caption{\label{figure35bis}A complementary pair decorated by $m+n-2$ balloons
  and balls. The labels $\me-1$, $\me+1$ and $n+m-2$ are
  not shown since the balloons resp.~balls are too small to hold them.}
\end{figure}
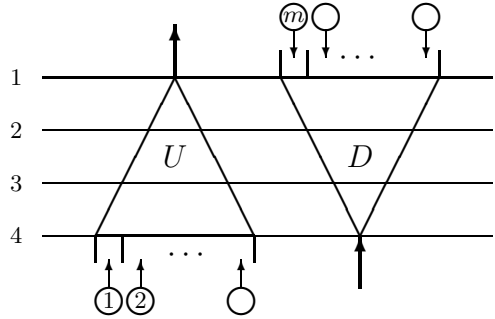
Then $U_j$ (resp.~$D_j$) is the set of balloons (resp.~balls) that lift
(resp.~fall) to level $j$, $1 \leq j \leq \ell$. Observe that the
reversing map~(\ref{eq:18}) is already built in the above assignment.

\begin{Example}
The following bipartitions correspond 
to the entries of the second line of the first part of the table in
Figure~\ref{celeste}:
\[
\left(\slp{}{2} \slp{}{3} \slp{}{1}\right),
\left(\slp{}{2} \slp{3}{} \slp{}{1}\right),
\left(\slp{\hphantom{2}}{} \slp{3}{} \slp{2}{1}\right)
\ \mbox { and } \
\left(\slp{1}{} \slp{3}{} \slp{2}{}\right),
\]
while the bipartitions corresponding to the second line of the second
part are
\[
\left(\slp{}{2} \slp{}{13}\right),
\left(\slp{3}{2} \slp{}{1}\right),
\left(\slp{3}{} \slp{2}{1}\right),
\ \mbox { and } \
\left(\slp{13}{} \slp{2}{}\right).
\]
\end{Example}

As an exercise, we recommend describing the isomorphism of
Theorem~\ref{Opet} in terms of bipartitions. 
The above example serves as a clue how to do~so.

\section{Trees with zones and the biassociahedron $\EK^\en_\me$}

In this section we present our definition of the face poset $\EK^\en_\me$ of the
biassociahedron. Let us recall the classical associahedron first.

\subsection{The associahedron $K_\me$}
\label{ted_umiram_na_plice}
As in~(\ref{eq:1}), denote by $\free(\xi_2,\xi_3,\ldots)$ the free
non-$\Sigma$ operad in the monoidal category of sets, generated by the
operations of $\xi_2,\xi_3,\ldots$ of arities $2,3,\ldots$,
respectively. Its component of arity $n$
consists of (up-rooted) planar rooted trees with vertices having at
least $2$ inputs \cite[Section~4]{markl:handbook}. 
We can therefore define the map~(\ref{eq:1}) simply
by forgetting the level functions. We however give a more formal, inductive
definition which exhibits some features of other constructions used
later in this note.

Let $(U,\ell) \in \lT_\me$. Since our description of the
map~(\ref{eq:1}) will not depend on
the level function, we drop it from the notation. 
If $U$ is the up-rooted $n$-corolla $c_\me$,
$\me \geq 2$, i.e.~the tree with one vertex and $\me$ leaves, we put
\[
\varpi(c_\me) := \xi_\me \in  \free(\xi_2,\xi_3,\ldots)(\me).
\] 
Agreeing that $c_1$ denotes the exceptional tree $\except$, we extend
the above formula for $\me=1$ by
\[
\varpi_1(c_1) := e \in \free(\xi_2,\xi_3,\ldots)(1),
\]
where $e$ is the operad unit.
Let us proceed by induction on the number of vertices. 
An arbitrary $U \in  \lT_\me$, $\me \geq 2$, is of the 
form 
\begin{equation}
\raisebox{-3.2em}{}
\label{umru_na_rakovinu_plic?}
{
\unitlength=.07em
\thicklines
\begin{picture}(120.00,45.00)(0.00,45.00)
\put(110.00,7.00){\makebox(0.00,0.00){\scriptsize $U_a$}}
\put(40.00,7.00){\makebox(0.00,0.00){\scriptsize $U_2$}}
\put(10.00,7.00){\makebox(0.00,0.00){\scriptsize $U_1$}}
\put(70.00,45.00){\makebox(0.00,0.00){$\cdots$}}
\put(120.00,0.00){\line(-1,5){10.00}}
\put(100.00,0.00){\line(1,0){20.00}}
\put(110.00,50.00){\line(-1,-5){10.00}}
\put(50.00,0.00){\line(-1,5){10.00}}
\put(30.00,0.00){\line(1,0){20.00}}
\put(40.00,50.00){\line(-1,-5){10.00}}
\put(60.00,70.00){\line(-1,-1){20.00}}
\put(20.00,0.00){\line(-1,5){10.00}}
\put(0.00,0.00){\line(1,0){20.00}}
\put(10.00,50.00){\line(-1,-5){10.00}}
\put(60.00,70.00){\line(5,-2){50.00}}
\put(60.00,70.00){\line(-5,-2){50.00}}
\put(60.00,90.00){\line(0,-1){20.00}}
\end{picture}}  
\end{equation}
with some up-rooted, possibly exceptional, trees $U_1,\ldots,U_a$, $a
\geq 2$, each having strictly fewer vertices than $U$.  We
then put
\[
\varpi(U) := \xi_a\big(\varpi(U_1),\ldots,\varpi(U_a)\big) \in
\free(\xi_2,\xi_3,\ldots)(\me), 
\]
where $-(-,\ldots,-)$ in the right hand side denotes the operad composition.
Notice that we simplified the notation by dropping the subscripts of
$\varpi$.

For $(U',\ell), (U'',\ell'') 
\in \lT_\me$ let $(U',\ell') \sim (U'',\ell'')$ if $\varpi(U',\ell') = 
\varpi(U'',\ell'')$. Since obviously the latter equality holds 
if and only if $U' = U''$, the levels disappear and the quotient
$\lT_\me/\sim$ is isomorphic to the set $T_\me$ of up-rooted trees with $\me$
leaves. The \po\ of $\lT_\me$ induces the standard
\po\footnote{The one such that $T' < T''$ if and only if there exists
a morphism of planar up-rooted trees $T' \to T''$.} on $T_\me$, so we
have the isomorphism
\[
\EK_\me \cong\EP_{\me-1}/\sim.
\] 
We can take the above equation as a {\em definition\/} of the face poset of
the associahedron $K_\me$. The discrepancy between the indices ($\me$
versus $\me-1$) is of historical origin.

\subsection{Complementary pairs with zones}
\label{Vezmou_mi_kus_plice?}
For $m,n \geq 1$, consider a triple $(U,D,z)$ consisting of an
up-rooted planar tree $U$ with $\me$ leaves, a down-rooted planar tree
$D$ with $\en$ leaves, and an order preserving epimorphism
\begin{equation}
\label{eq:7}
z:
\vert(U) \cup \vert(D) \epi \{1,\ldots,l\}.
\end{equation}

We call $i\in \{1,\ldots,l\}$ such that $z^{-1}(i)$ contains both a
vertex of $U$ and a vertex of $D$ a~{\em barrier\/}. The remaining
$i$'s are the {\em zones\/} of $z$. If $z^{-1}(i) \subset \vert(U)$ 
(resp.~$z^{-1}(i) \subset \vert(D)$), we call $i$ an {\em up-zone\/}
(resp.~{\em down-zone\/}).

\begin{definitionA}
\label{sec:compl-pairs-with-2}
We call~(\ref{eq:7}) a {\em zone function} if 
\begin{itemize}
\item[(i)]
$z$ is strictly order-preserving on barriers and
\item[(ii)]
there are no adjacent zones of the same type.
\end{itemize}
We denote by $\zT^\en_\me$ the set of all triples $(U,D,z)$ consisting of a
planar up-rooted tree $U$ with $\me$ leaves, a planar down-rooted tree
with $\en$ leaves, and a zone function~(\ref{eq:7}). 
\end{definitionA}

Condition~(i) means the following. 
If $u',u'' \in \vert(U)$ and $v \in \vert(D)$ are such that $z(u')
= z(u'') = z(v)$, then $u'$ and $u''$ are unrelated.\footnote{By this
  we mean that neither $u' < u''$ nor  $u' > u''$.} Dually,  
if $v',v'' \in \vert(D)$ and $u \in \vert(U)$ are such that $z(u) = z(v')
= z(v'')$, then $v'$ and $v''$ are unrelated.
Condition~(ii) can be rephrased as follows. For $i \in
(1,\ldots,l)$ let 
\[
\textstyle
t_z(i) := \left\{
\begin{array}{ll}
{\rm U}&\mbox { if $i$ is an up-zone},
\\
{\rm D}&\mbox { if $i$ is an down-zone, and}
\\
{\rm B}&\mbox { if $i$ is a barrier.}
\end{array}
\right.
\] 
Condition~(ii) then says that the sequence
$\big(t_z(1)\cdots t_z(l)\big)$ does not contains subsequences UU or
DD. We call $\big(t_z(1)\cdots t_z(l)\big)$ the {\em type\/} of $z$.

The notion of complementary pairs with zones is illustrated in
Figure~\ref{fig:2}. In the picture,  
the values 1,3,4,7 are zones, the values 2,5,6 are
barriers. The type of the zone function is $({\rm DBDUBBUB})$.
\begin{figure}
\begin{center}
{
\unitlength=1pt
\begin{picture}(310.00,125)(0.00,0.00)
\put(310.00,15.00){\makebox(0.00,0.00){$8$ B}}
\put(310.00,29.0){\makebox(0.00,0.00){$7$ U}}
\put(310.00,42.50){\makebox(0.00,0.00){$6$ B}}
\put(310.00,57.00){\makebox(0.00,0.00){$5$ B}}
\put(310.00,70.00){\makebox(0.00,0.00){$4$ U}}
\put(310.00,88.00){\makebox(0.00,0.00){$3$ D}}
\put(310.00,100.00){\makebox(0.00,0.00){$2$ B}}
\put(310.00,112.50){\makebox(0.00,0.00){$1$ D}}
\put(210.00,15.00){\makebox(0.00,0.00){$\bullet$}}
\put(120.00,15.00){\makebox(0.00,0.00){$\bullet$}}
\put(70.00,15.00){\makebox(0.00,0.00){$\bullet$}}
\put(100.00,27.00){\makebox(0.00,0.00){$\bullet$}}
\put(70.00,33.00){\makebox(0.00,0.00){$\bullet$}}
\put(56.00,28.60){\makebox(0.00,0.00){$\bullet$}}
\put(90.00,42.50){\makebox(0.00,0.00){$\bullet$}}
\put(60.00,42.50){\makebox(0.00,0.00){$\bullet$}}
\put(210.00,42.50){\makebox(0.00,0.00){$\bullet$}}
\put(192.00,57.00){\makebox(0.00,0.00){$\bullet$}}
\put(58.00,57.00){\makebox(0.00,0.00){$\bullet$}}
\put(85.00,57.00){\makebox(0.00,0.00){$\bullet$}}
\put(90.00,67.00){\makebox(0.00,0.00){$\bullet$}}
\put(70.00,74.00){\makebox(0.00,0.00){$\bullet$}}
\put(235.00,92.00){\makebox(0.00,0.00){$\bullet$}}
\put(205.00,88.00){\makebox(0.00,0.00){$\bullet$}}
\put(210.00,100.00){\makebox(0.00,0.00){$\bullet$}}
\put(80.00,100.00){\makebox(0.00,0.00){$\bullet$}}
\put(250.00,113.00){\makebox(0.00,0.00){$\bullet$}}
\put(220.00,111.00){\makebox(0.00,0.00){$\bullet$}}
\put(190.00,109.00){\makebox(0.00,0.00){$\bullet$}}
\put(177.00,112.00){\makebox(0.00,0.00){$\bullet$}}
\thicklines
\put(270.00,120.00){\line(-1,-2){60.00}}
\put(150.00,120.00){\line(1,0){120.00}}
\put(210.00,0.00){\line(-1,2){60.00}}
\put(140.00,0.00){\line(-1,2){60.00}}
\put(20.00,0.00){\line(1,0){120.00}}
\put(80.00,120.00){\line(-1,-2){60.00}}
\put(30,-10.00){\vector(0,1){10.00}}
\put(210,-10.00){\vector(0,1){10.00}}
\put(80,120.00){\vector(0,1){10.00}}
\put(40,-10.00){\vector(0,1){10.00}}
\put(130,-10.00){\vector(0,1){10.00}}
\put(160,120.00){\vector(0,1){10.00}}
\put(170,120.00){\vector(0,1){10.00}}
\put(260,120.00){\vector(0,1){10.00}}
\put(210.00,125){\makebox(0.00,0.00)[b]{$\cdots$}}
\put(80.00,-5){\makebox(0.00,0.00)[t]{$\cdots$}}
\thinlines
\put(0.00,15){\line(1,0){290.00}}
\put(0.00,42.50){\line(1,0){290.00}}
\put(0.00,57.00){\line(1,0){290.00}}
\multiput(0.00,80.00)(9.8,0){30}{\line(1,0){4.9}}
\put(0.00,100.00){\line(1,0){290.00}}
\end{picture}}
\end{center}
\caption{\label{fig:2} An element of $\zT^\en_\me$.}
\end{figure}

\begin{Example}
Let us look at case $\en=2$ of Definition~A. 
For $X = (U,\fgen21,z) \in \zT^2_\me$, only the following three
cases may happen.

\noindent 
{\em Case $l = 1$.}  $1$ is a barrier if $\me \geq 2$ and $1$ is a
down-zone if $\me=1$.

\noindent 
{\em Case $l = 2$.} One has four possibilities for the type of $z$, namely
$({\rm D}{\rm U}), ({\rm U}{\rm D}), ({\rm B}{\rm U})$ 
or $({\rm U}{\rm B}).$
In the $({\rm D}{\rm U})$ and  $({\rm U}{\rm D})$ cases $\me \geq 2$,
in the remaining two cases $\me \geq 3$.

\noindent 
{\em Case $l = 3$.} $\me \geq 3$ and the only possibility for the
type is Alfred Jarry's $({\rm U}{\rm B}{\rm U})$.
\end{Example}

For $i \in \{\rada 1h\}$ define its {\em closure\/}
$\overline i 
\subset \{\rada 1h\}$ by
$\overline i := \{i\}$ if $i$ is a barrier, and $\overline i$ to be the
set consisting of $i$ and its adjacent barriers in the opposite case.
For the complementary pair in Figure~\ref{fig:2},
\[
\begin{array}{llll}
\ol1 = \{1,2\}, & \ol2 = \{2\}, &
\ol3 = \{2,3\}, &\ol4 = \{4,5\},
\\
\ol5 = \{5\}, &\ol6 = \{6\},
&
\ol7 = \{6,7,8\}, &\ol8 = \{8\}.
\end{array}
\]

A {\em morphism\/} $(U',D',z') \to (U'',D'',z'')$ is a triple
$(\phi_u,\phi_d,\hat \phi)$ consisting of morphisms $\phi_u : U' \to
U''$, $\phi_d : D'' \to D''$ of trees and of an order-preserving map
$\hat \phi : \{\rada 1{l'}\} \to \{\rada 1{l''}\}$ such that the
closures are preserved, that is
\[
z''\big(\phi_u(u)\big) \in \overline{\hat \phi\big(z'(u)\big)} \ \mbox { and }
z''\big(\phi_d(v)\big) \in \overline{\hat \phi\big(z'(v)\big)}, 
\]
for $u \in \vert(U)$ and $v \in \vert(D)$. We may also say that the
obvious analog of~(\ref{eq:2}), i.e.
\begin{equation}
\label{eq:8}
\raisebox{-2em}{}
{
\unitlength=1.000000pt
\begin{picture}(110.00,35.00)(0,15)
\thicklines
\put(125.00,30.00){\vector(0,-1){20.00}}
\put(-20,30.00){\vector(0,-1){20.00}}
\put(10,0.00){\vector(1,0){81}}
\put(35.00,40.00){\vector(1,0){33.00}}
\put(129,20.00){\makebox(0.00,0.00)[l]{\scriptsize $z''_u\cup z''_d$}}
\put(-26,20.00){\makebox(0.00,0.00)[r]{\scriptsize $z_u' \cup z'_d$}}
\put(50.00,5.00){\makebox(0.00,0.00)[b]{\scriptsize $\hat \phi$}}
\put(52.00,45.00){\makebox(0.00,0.00)[b]{\scriptsize $\phi_u
    \cup \phi_d$}}
\put(125,0.00){\makebox(0.00,0.00){\hphantom{.}$\{1,\ldots,h''\}$.}}
\put(-20,0.00){\makebox(0.00,0.00){$\{1,\ldots,h'\}$}}
\put(125,40.00){\makebox(0.00,0.00){$\vert(U'')\cup \vert(D'')$}}
\put(-20.00,40.00){\makebox(0.00,0.00){$\vert(U') \cup \vert(D')$}}
\end{picture}}
\end{equation} 
commutes up to the closures. The notion of a morphism induces a partial
order $<$.

\begin{definitionB}
\label{sec:compl-pairs-with-1}
The {\em face poset of the (step-one) biassociahedron\/} is the poset $\EK^\en_\me :=
(\zT^\en_\me,<)$ of complementary pairs of 
trees with zones, with the above partial order.
\end{definitionB}

Let $(U,D,\ell) \in \lT^\en_\me$ be a pair with the level function $\ell :
\vert(U) \cup \vert(D) \to \{1,\ldots,h\}$. We call $i \in
\{1,\ldots,h\}$ an {\em up-level\/} (resp.~{\em down-level\/}) if
$\ell^{-1}(i) \subset \vert(U)$ (resp.~$\ell^{-1}(i) \subset
\vert(D)$).

\begin{definition}
\label{sec:compl-pairs-with}
Let   $(U,D,\ell) \in \lT^\en_\me$ be as above and $(1,\ldots,l)$ the quotient
cardinal obtained from $(1,\ldots,h)$ by identifying the adjacent
up-levels and the adjacent down-levels. Denote by $p: (1,\ldots,h) \epi
(1,\ldots,l)$ the projection and define 
\[
\pi(U,D,\ell) := (U,D,z) \in \zT^\en_\me, \ \mbox { with } z := p \circ \ell.
\]
We call the map $\pi:  \lT^\en_\me \to \zT^\en_\me$ defined in this way
the {\em canonical projection\/} and $ z = p \circ \ell$ the 
{\em induced zone function\/}.  
\end{definition}

It is easy to show that the map $\pi$ preserves the partial orders,
giving rise to the projection $\EP^\en_\me \epi \EK^\en_\me$ of posets.
We finish this subsection by two statements needed in the proof of Theorem~C.

\begin{subequations}
\begin{proposition}
\label{sec:compl-pairs-with-3}
Let $(U,D,z'), (U,D,z'') \in \zT^\en_\me$. If, 
for each vertices $u \in \vert(U)$ and \hbox{$v \in \vert(D)$},
\begin{equation}
\label{eq:15}
z'(u) < z'(v)\ (\mbox {resp.~}z'(u) = z'(v),\
\mbox {resp.~} z'(u) > z'(v)) 
\end{equation}
is equivalent to
\begin{equation}
\label{eq:16}
z''(u) < z''(v)\ (\mbox {resp.~}z''(u) = z''(v),\
\mbox {resp.~} z''(u) > z''(v)),
\end{equation}
then $z' = z''$.
\end{proposition}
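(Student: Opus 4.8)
The plan is to reconstruct the common refinement of the two zone functions from the combined information and show both $z'$ and $z''$ equal it. The key observation is that a zone function on $(U,D)$ is completely determined by two pieces of data: first, the relative order it imposes between each pair $(u,v)$ with $u \in \vert(U)$, $v \in \vert(D)$ (this is exactly the data that hypothesis~(\ref{eq:15})$\Leftrightarrow$(\ref{eq:16}) says $z'$ and $z''$ share); and second, the partitioning of consecutive levels into zones and barriers, which is forced by condition~(ii) of Definition~A once one knows which vertices of $U$ and $D$ are ``at the same level.'' So first I would set up, for a zone function $z$ on $(U,D)$, the preorder on $\vert(U) \cup \vert(D)$ that $z$ induces via $x \preceq_z y \iff z(x) \le z(y)$, and note that condition~(i) (strict order-preservation on barriers) means $\preceq_z$ refines the tree orders on $\vert(U)$ and on $\vert(D)$ separately in the strict sense when a barrier is involved.

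Next I would show that the hypothesis forces $\preceq_{z'}$ and $\preceq_{z''}$ to agree. On a mixed pair $(u,v)$ this is immediate from (\ref{eq:15})$\Leftrightarrow$(\ref{eq:16}). For a pair $u', u'' \in \vert(U)$: if $z'(u') = z'(u'')$ then by condition~(i) the level $z'(u')$ is not a barrier, hence is an up-zone, and the same argument applied through any vertex $v \in \vert(D)$ of an adjacent barrier (which exists because $z$ is an epimorphism onto $\{1,\dots,l\}$ and, by condition~(ii), an up-zone cannot be adjacent to another up-zone, so unless $l=1$ it has a barrier neighbour) pins down $z''(u') = z''(u'')$ via the mixed comparisons; the degenerate case $l = 1$ with $n = 1$ (a single down-zone, no vertices of $D$) or $m=1$ must be checked separately but is trivial since then $U$ or $D$ is exceptional and the zone function is unique. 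If instead $u' <_U u''$ (related in the tree order of $U$), I would argue that $z'(u') \le z'(u'')$, and that equality versus strict inequality is again detected by comparison with $D$-vertices: the level $z'(u')$ is an up-zone or barrier, and one locates a $D$-vertex $v$ sitting strictly between them in $z'$-order precisely when there is a barrier separating the two, which transfers to $z''$. The symmetric argument handles pairs in $\vert(D)$.

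Having shown $\preceq_{z'} = \preceq_{z''}$ as preorders on $\vert(U) \cup \vert(D)$, the final step is to recover $z$ from $\preceq_z$. The equivalence classes of $\preceq_z$ are exactly the fibres $z^{-1}(i)$, and $\preceq_z$ linearly orders them, so the number of classes $l$ and the class-ranking are determined; but a priori $z'$ could have ``merged'' or ``split'' some classes relative to $z''$ in a way invisible to the preorder --- except that the preorder literally \emph{is} the ranked partition, so equal preorders give equal $l$ and equal fibres, hence $z' = z''$ on the nose. The type and the barrier/zone labelling then coincide automatically. The main obstacle I expect is the bookkeeping in the degenerate small-$l$ cases (especially $l = 1$, and the cases $m = 1$ or $n = 1$ where one tree has no vertices), since there the ``compare through a $D$-vertex of an adjacent barrier'' trick has nothing to compare against; these need to be dispatched by hand using the uniqueness of the zone function when one of the trees is exceptional, exactly as in the reductions $\lT^1_u \cong \lT_u$ used in the proof of Theorem~\ref{Opet}.
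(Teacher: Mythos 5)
Your overall architecture matches the paper's: show that $z'$ and $z''$ induce the same ordered partition of $\vert(U)\cup\vert(D)$, the crucial case being two vertices of the same tree, which must be compared through a vertex of the other tree supplied by condition~(ii); the final step (equal ranked fibres force $z'=z''$) is also the paper's. But the central step, as you describe it, would fail. First, condition~(i) does not say that $u',u''\in\vert(U)$ with $z'(u')=z'(u'')$ cannot sit on a barrier --- it only says that if they do, they are unrelated in the tree order --- so your reduction to the up-zone case rests on a false premise (though when the common level \emph{is} a barrier the claim is immediate: compare both with the $D$-vertex on that very level). Second, and more seriously, comparing $u'$ and $u''$ with a single $D$-vertex $v$ on a level adjacent to the up-zone $z'(u')=z'(u'')$ only yields $z''(u')<z''(v)$ and $z''(u'')<z''(v)$ (or both $>$); two numbers bounded on the same side by a third need not be equal, so the mixed comparisons with that one $v$ do not ``pin down'' $z''(u')=z''(u'')$.

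The witness has to be found on the $z''$ side, not the $z'$ side. The paper argues by contradiction: if, say, $z''(u')>z''(u'')$, then since $z''$ has no two adjacent up-zones, some level weakly between $z''(u'')$ and $z''(u')$ contains a vertex $v\in\vert(D)$, giving $z''(u')\geq z''(v)\geq z''(u'')$ with at least one inequality strict; transferring these two mixed comparisons back through the hypothesis gives $z'(u')\geq z'(v)\geq z'(u'')$ with one strict, contradicting $z'(u')=z'(u'')$. (Equivalently, you could stay on the $z'$ side, but then you must use \emph{all} of $\vert(D)$: every $D$-vertex lies strictly below or strictly above \emph{both} $u'$ and $u''$ in $z'$, hence also in $z''$, so every level weakly between $z''(u'')$ and $z''(u')$ would be an up-zone, violating~(ii).) Once this is repaired, the rest of your plan goes through --- the strict comparisons follow by exchanging the r\^oles of $z'$ and $z''$, and the recovery of $z$ from its ranked fibres is exactly the paper's closing step --- and the special pleading for $l=1$, $m=1$ or $n=1$ becomes unnecessary, since the contradiction argument applies uniformly whenever $z''(u')\not=z''(u'')$.
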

\end{subequations}

\begin{proof}
Let
\[
z' : \vert(U) \cup \vert (D) \to \{1,\ldots,h'\} \ \mbox { and } \
z'' : \vert(U) \cup \vert (D) \to \{1,\ldots,h''\}
\]
be zone functions as in the proposition. Let us show that, for
$x,y \in \vert(U) \cup \vert (D)$, 
\begin{equation}
\label{eq:14}
\mbox {if $z'(x) = z'(y)$ then $z''(x) = z''(y)$.} 
\end{equation}
The above implication immediately follows from the assumptions if $x
\in \vert(U)$ and $y \in \vert(D)$, or if $x
\in \vert(D)$ and $y \in \vert(U)$. So assume $x,y \in \vert(U)$,
$z'(x) = z'(y)$
and, say, $z''(x) > z''(y)$. Since, by definition, $z''$ does not have
two adjacent zones of type ${\rm U}$, there must exist $v \in
\vert(D)$ such that $z''(x) \geq z''(v) \geq  z''(y)$, where at least
one relation is sharp. Assuming the
equivalence between~(\ref{eq:15}) and~(\ref{eq:16}), we get
$z'(x) \geq z'(v) \geq  z'(y)$ where again at least one relation is
sharp, so $z'(x)
\not = z'(y)$, which is a contradiction. The case  $x,y \in \vert(D)$
can be discussed in the same way, thus~(\ref{eq:14}) is established.

For $i \in \{1,\ldots,h'\}$ (resp.~$j \in \{1,\ldots,h''\}$) denote
$S'_i := z'^{-1}(i)$ (resp.~$S''_j := z''^{-1}(j)$). By~(\ref{eq:14}),
for each $i$ there exists a unique $j$ such that $S'_i \subset
S''_j$. Exchanging the r\^oles of $z'$ and $z''$, we see that, vice
versa, for each $j$ there exists a unique $i$ such that $S''_j \subset
S'_i$. This obviously means that there exists an automorphism $\varphi
: \{1,\ldots,h'\} \to \{1,\ldots,h''\}$ such that $z' = \varphi \circ
z''$. As both $z'$ and $z''$ are order-preserving epimorphisms, 
$\varphi$ must be the identity. 
\end{proof}

The following proposition in conjunction with
Proposition~\ref{sec:compl-pairs-with-3} shows that the induced zone
function remembers the relative heights of vertices of $U$ and $D$ but
nothing more. 

\begin{proposition}
\label{Jarca_s_M1_na_chalupe}
Let $(U,D,\ell) \in \lT^\en_\me$ and $z$ be the zone function induced by
$\ell$. Then, for each $u \in \vert(U)$ and $v \in \vert(D)$,
\[
\ell(u) < \ell(v)\ (\mbox {resp.~}\ell(u) = \ell(v),\
\mbox {resp.~} \ell(u) > \ell(v))
\]
if and only if 
\[
z(u) < z(v)\ (\mbox {resp.~}z(u) = z(v),\
\mbox {resp.~} z(u) > z(v)).
\]
\end{proposition}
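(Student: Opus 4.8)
The plan is to unwind the explicit description of the canonical projection from Definition~\ref{sec:compl-pairs-with}. Write $p : (1,\ldots,h) \epi (1,\ldots,l)$ for the relabelling projection, so that $z = p \circ \ell$. First I would record exactly which levels $p$ glues together: it is the quotient by the equivalence relation generated by the rule $k \sim k+1$ whenever $k$ and $k+1$ are both up-levels or both down-levels, and a level cannot be both an up-level and a down-level (its nonempty $\ell$-fibre would have to lie in $\vert(U) \cap \vert(D) = \emptyset$). A trivial induction on the length of the interval then shows that, for $i < j$, one has $p(i) = p(j)$ if and only if every level in $\{i, i+1, \ldots, j\}$ is an up-level, or every level there is a down-level. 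In particular $p$ is order-preserving.

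Next I would fix $u \in \vert(U)$ and $v \in \vert(D)$ and exploit the basic asymmetry: since $u \in \ell^{-1}(\ell(u))$ the fibre $\ell^{-1}(\ell(u))$ is not contained in $\vert(D)$, so $\ell(u)$ is not a down-level; dually $\ell(v)$ is not an up-level. The three possibilities $\ell(u) = \ell(v)$, $\ell(u) < \ell(v)$, $\ell(u) > \ell(v)$ are mutually exclusive and exhaustive, and I would treat them in turn. If $\ell(u) = \ell(v)$ then $z(u) = p(\ell(u)) = p(\ell(v)) = z(v)$. If $\ell(u) < \ell(v)$, then $z(u) = p(\ell(u)) \le p(\ell(v)) = z(v)$ by monotonicity of $p$; equality is impossible, because by the characterisation above it would force the interval from $\ell(u)$ to $\ell(v)$ to consist entirely of up-levels or entirely of down-levels, contradicting the fact that $\ell(u)$ is not a down-level and $\ell(v)$ is not an up-level; hence $z(u) < z(v)$. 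The case $\ell(u) > \ell(v)$ is symmetric and yields $z(u) > z(v)$.

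Finally, since the three hypotheses on $\big(\ell(u),\ell(v)\big)$ are mutually exclusive and exhaustive and imply the three correspondingly mutually exclusive and exhaustive conclusions on $\big(z(u),z(v)\big)$, each of the three implications is automatically an equivalence, which is exactly the statement. The argument is entirely formal, and the only point demanding care is the bookkeeping of the fibres of $p$ in the first step — in particular the observation that a level containing vertices of both $U$ and $D$ is never merged with a neighbour, which is precisely what stops any collapse from spanning a $\vert(U)$-level and a $\vert(D)$-level.
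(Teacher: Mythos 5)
Your argument is correct and is exactly the "simple application of the definition of the induced zone function" that the paper's one-line proof alludes to: you have merely written out the details (the fibres of $p$ are maximal runs of same-type levels, $\ell(u)$ is never a down-level and $\ell(v)$ never an up-level, hence no collapse can span both). Nothing to object to.
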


\begin{proof}
The proof is a simple application of the definition of the induced zone function.
\end{proof}

\subsection{The map $\varpi: \lT^\en_\me \to \free(\Xi)$.} 
\label{sec:map-pi:-ltm_n}
This subsection relies on the notation and terminology recalled in the
Appendix.  For $(U,D,\ell) \in \lT^\en_\me$ and subtrees $\overline U
\subset U$, $\overline D \subset D$ with, say, $\overline \me$ and
resp.~$\overline \en$ leaves, one has a natural {\em restriction\/}
\begin{equation}
\label{eq:9}
r_{\overline U, \overline D}(U,D,\ell) 
= (\overline U, \overline D, \overline \ell) \in
\lT^{\overline \en}_{\overline \me}
\end{equation}
with the level function $\overline \ell : \vert(\overline U) \cup
\vert( \overline D) \to (1,\ldots,\overline h)$ defined as follows.
The image of the restriction of $\ell$ to $\vert(\overline U) \cup
\vert( \overline D)$ is a sub-cardinal of $(1,\ldots, h)$, canonically
isomorphic to $(1,\ldots,\overline h)$ for some $\overline h \leq
h$. The level function $\overline \ell$ is then the composition of the
restriction of $\ell$ with this canonical isomorphism.  In other
words, $\overline \ell$ is the epimorphism in the factorization
\[
\vert(\overline U) \cup \vert(
\overline D) \stackrel {\overline \ell}\epi (1,\ldots,\overline h)
\hookrightarrow (1,\ldots,h)
\]
of the restriction of $\ell$.

Let $\free(\Xi)$
be the free PROP in the category of sets generated by
\[
\Xi := \big\{ \xi^\en_\me \ | \ m,n \geq 1, \ (m,n) \not= (1,1)\big\},
\]
where $\xi^\en_\me$ is the generator of biarity $(\en,\me)$ ($m$ inputs and
$n$ outputs). Observe that one has, for each $m,n \geq 1$, the inclusions
\begin{equation}
\label{eq:4}
\iota_U: \free(\xi_2,\xi_3,\ldots)(\me) \hookrightarrow \Fr \Fuk 1\me
    \ \mbox
     { and } 
\iota_D : \free(\xi_2,\xi_3,\ldots)(\en) \hookrightarrow \Fr\FuK(\en,1),
\end{equation}
given by
\[
\iota_U(\xi_a) := \xi^1_a \  \mbox { resp. }\ 
\iota_D(\xi_a) := \xi_1^a,\ a \geq 2.
\]
We will use $\iota_U$ to identify $\free(\xi_2,\xi_3,\ldots)(\me)$ with a
subset of $\Fr\FuK(1,\me)$.

Let us start the actual construction of the map $\varpi: \lT^\en_\me \to
\free(\Xi)\Fuk \en\me$.  First of all, for a~down-rooted tree $D$, i.e.~an
element of $\lT^\en_1$, define
\[
\varpi(D) := \iota_D\big(\varpi(D')\big)\in \free(\Xi) \Fuk \en1,
\] 
where $D'$ is $D$ turned upside down (i.e.~an up-rooted tree), $\varpi$ is as
in~\ref{ted_umiram_na_plice} and $\iota_D$ the second inclusion
of~(\ref{eq:4}). 

Each element $X \in \lT^\en_\me$ is a triple $X = (U,D,\ell)$, where $U$
is an up-rooted tree with $\me$ leaves and $D$ a down-rooted
tree with $\en$ leaves. We construct $\varpi(X)$ by induction on the
number of vertices of $D$. We distinguish three cases.

\noindent {\it Case 1.} 
The root vertex of $D$ is above the
root vertex of $U$,\footnote{That is the vertex adjacent to the root.}
schematically 
\[
{
\unitlength=.6pt
\begin{picture}(140.00,90)(0.00,12.00)
\put(0.00,40.00){\line(1,0){140.00}}
\put(0.00,60.00){\line(1,0){140.00}}
\thicklines
\put(106.00,92.00){\makebox(0.00,0.00)[b]{\scriptsize $\cdots$}}
\put(46.00,5){\makebox(0.00,0.00)[t]{\scriptsize $\cdots$}}
\put(100.00,78.00){\makebox(0.00,0.00){\scriptsize $D$}}
\put(40.00,20.00){\makebox(0.00,0.00){\scriptsize $U$}}
\put(60.00,10.00){\line(0,-1){10.00}}
\put(30.00,10.00){\line(0,-1){10.00}}
\put(20.00,10.00){\line(0,-1){10.00}}
\put(120.00,100.00){\line(0,-1){10.00}}
\put(90.00,100.00){\line(0,-1){10.00}}
\put(80.00,100.00){\line(0,-1){10.00}}
\put(100.00,60.00){\line(0,-1){60.00}}
\put(130.00,90.00){\line(-1,0){60.00}}
\put(100.00,60.00){\line(1,1){30.00}}
\put(70.00,90.00){\line(1,-1){30.00}}
\put(40.00,100.00){\line(0,-1){60.00}}
\put(80.00,60.00){\line(0,1){0.00}}
\put(70.00,10.00){\line(-1,1){30.00}}
\put(60.00,10.00){\line(1,0){10.00}}
\put(10.00,10.00){\line(1,0){50.00}}
\put(40.00,40.00){\line(-1,-1){30.00}}
\end{picture}}
\]
Then we put 
\begin{equation}
\label{eq:12}
\varpi(X) := \frac{\varpi(D)}{\varpi(U)} = \varpi(D) \circ \varpi(U)
\in \free(\Xi) \Fuk \en\me.
\end{equation}

\noindent {\it Case 2.} 
The vertex of $D$ is at the same level as  the
root vertex of $U$. We decompose $U$ as
in~(\ref{umru_na_rakovinu_plic?}) and $D$ in the obvious dual manner,
the result is portrayed in Figure~\ref{pojedu?}.
\begin{figure}
\[
\raisebox{-4.7em}{}
{
\unitlength=.07em
\begin{picture}(120.00,70.00)(50.00,66.00)
\put(-20.00,70.00){\makebox(0.00,0.00)[r]{$X =$}}
\put(-10.00,70.00){\line(1,0){280}}
\thicklines
\put(110.00,7.00){\makebox(0.00,0.00){\scriptsize $U_a$}}
\put(40.00,7.00){\makebox(0.00,0.00){\scriptsize $U_2$}}
\put(10.00,7.00){\makebox(0.00,0.00){\scriptsize $U_1$}}
\put(70.00,45.00){\makebox(0.00,0.00){$\cdots$}}
\put(120.00,0.00){\line(-1,5){10.00}}
\put(100.00,0.00){\line(1,0){20.00}}
\put(110.00,50.00){\line(-1,-5){10.00}}
\put(50.00,0.00){\line(-1,5){10.00}}
\put(30.00,0.00){\line(1,0){20.00}}
\put(40.00,50.00){\line(-1,-5){10.00}}
\put(60.00,70.00){\line(-1,-1){20.00}}
\put(20.00,0.00){\line(-1,5){10.00}}
\put(0.00,0.00){\line(1,0){20.00}}
\put(10.00,50.00){\line(-1,-5){10.00}}
\put(60.00,70.00){\line(5,-2){50.00}}
\put(60.00,70.00){\line(-5,-2){50.00}}
\put(60.00,90.00){\line(0,-1){20.00}}
\put(140,140){
\thicklines
\put(110.00,-7.00){\makebox(0.00,0.00){\scriptsize $D_b$}}
\put(40.00,-7.00){\makebox(0.00,0.00){\scriptsize $D_2$}}
\put(10.00,-7.00){\makebox(0.00,0.00){\scriptsize $D_1$}}
\put(70.00,-45.00){\makebox(0.00,0.00){$\cdots$}}
\put(120.00,0.00){\line(-1,-5){10.00}}
\put(100.00,0.00){\line(1,0){20.00}}
\put(110.00,-50.00){\line(-1,5){10.00}}
\put(50.00,0.00){\line(-1,-5){10.00}}
\put(30.00,0.00){\line(1,0){20.00}}
\put(40.00,-50.00){\line(-1,5){10.00}}
\put(60.00,-70.00){\line(-1,1){20.00}}
\put(20.00,0.00){\line(-1,-5){10.00}}
\put(0.00,0.00){\line(1,0){20.00}}
\put(10.00,-50.00){\line(-1,5){10.00}}
\put(60.00,-70.00){\line(5,2){50.00}}
\put(60.00,-70.00){\line(-5,2){50.00}}
\put(60.00,-90.00){\line(0,1){20.00}}
}
\end{picture}}
\hskip 4em  
\]
\caption{\label{pojedu?}The decomposition of $U$ and $D$ in the 2nd case.}
\end{figure}
We then define
\begin{equation}
\label{eq:13}
\varpi(X) :=
\begin{array}{c}
\varpi(D_1)\cdots \varpi(D_b)
\\
\hline
\rule{0pt}{1em}
\xi^b_a
\\
\hline
\varpi(U_1)\cdots \varpi(U_a)
\end{array}
=
 \big(\varpi(D_1) \boxtimes \cdots \boxtimes \varpi(D_b)\big)\circ \xi^b_a 
\circ\big(\varpi(U_1) \boxtimes \cdots \boxtimes \varpi(U_a)\big).
\end{equation}

\noindent {\it Case 3.} 
The root vertex of $D$ is below the
root vertex of $U$. In this case we decompose $U$ and $D$ as 
in Figure~\ref{diagnoza}
\begin{figure}
\begin{center}
{
\unitlength=.98pt
\begin{picture}(390.00,80.00)(-100.00,0.00)
\put(-40.00,40.00){\makebox(0.00,0.00)[r]{$X =$}}
\put(-300,0){
\put(270.00,20.00){\line(1,0){240.00}}
\thicklines
\put(340.00,10.00){\makebox(0.00,0.00){$\cdots$}}
\put(371.00,1.50){\makebox(0.00,0.00)[b]{\scriptsize $U_a$}}
\put(321.00,1.50){\makebox(0.00,0.00)[b]{\scriptsize $U_2$}}
\put(291.00,1.50){\makebox(0.00,0.00)[b]{\scriptsize $U_1$}}
\put(330.00,40.00){\makebox(0.00,0.00){\scriptsize $T$}}
\put(380.00,0.00){\line(-1,2){10.00}}
\put(360.00,0.00){\line(1,0){20.00}}
\put(370.00,20.00){\line(-1,-2){10.00}}
\put(330.00,0.00){\line(-1,0){20.00}}
\put(320.00,20.00){\line(1,-2){10.00}}
\put(320.00,20.00){\line(-1,-2){10.00}}
\put(370.00,20.00){\line(-1,1){40.00}}
\put(290.00,20.00){\line(1,0){80.00}}
\put(300.00,0.00){\line(-1,2){10.00}}
\put(280.00,0.00){\line(1,0){20.00}}
\put(290.00,20.00){\line(-1,-2){10.00}}
\put(330.00,60.00){\line(-1,-1){40.00}}
\put(330.00,80.00){\line(0,-1){20.00}}
}
\put(100,78.5){
\unitlength=.07em
\thicklines
\put(110.00,-7.00){\makebox(0.00,0.00){\scriptsize $D_b$}}
\put(60.00,-62.00){\makebox(0.00,0.00)[b]{\scriptsize $c^b_1$}}
\put(40.00,-7.00){\makebox(0.00,0.00){\scriptsize $D_2$}}
\put(10.00,-7.00){\makebox(0.00,0.00){\scriptsize $D_1$}}
\put(70.00,-25.00){\makebox(0.00,0.00){$\cdots$}}
\put(120.00,0.00){\line(-1,-5){10.00}}
\put(100.00,0.00){\line(1,0){20.00}}
\put(110.00,-50.00){\line(-1,5){10.00}}
\put(50.00,0.00){\line(-1,-5){10.00}}
\put(30.00,0.00){\line(1,0){20.00}}
\put(40.00,-50.00){\line(-1,5){10.00}}
\put(60.00,-70.00){\line(-1,1){20.00}}
\put(20.00,0.00){\line(-1,-5){10.00}}
\put(0.00,0.00){\line(1,0){20.00}}
\put(10.00,-50.00){\line(-1,5){10.00}}
\put(60.00,-70.00){\line(5,2){50.00}}
\put(60.00,-70.00){\line(-5,2){50.00}}
\put(60.00,-90.00){\line(0,1){20.00}}
}
\end{picture}}
\end{center}
\caption{\label{diagnoza}The decomposition of $U$ and $D$ in the $3$rd
case.}
\end{figure}
in which $T$ is the maximal up-rooted tree containing all vertices
above the level of the root vertex of $D$ and the up-rooted trees
$U_1,\ldots,U_a$ contain all the remaining vertices of $U$.  The
decomposition of $D$ is the same as in Case~2.  Using the
restriction~(\ref{eq:9}), we denote
\begin{equation}
\label{eq:10}
X_i :=  r_{U_i,c^b_1}(X),\ 1\leq i \leq a,\
\mbox { and }
Y_j := r_{T,D_j}(X),\ 1\leq j \leq b.
\end{equation}
Clearly $\varpi(X_i)$'s fall into the previous two cases.
Since $D_j$ has strictly less vertices than $D$, $\varpi(Y_j)$'s have
been defined by induction. We put
\begin{equation}
\label{rakovina??}
\varpi(X) = \frac{\varpi(Y_1) \cdots \varpi(Y_b)}{\varpi(X_1)\cdots\varpi(X_a)}  \in \FrFuK(\en,\me).
\end{equation}

\begin{Remark}
In the above construction of the map $\varpi$, the root vertex of $D$
plays a different r\^ole than the root vertex of $U$. One can exchange
the r\^oles of $U$ and $D$, arriving at a formally different formula
for $\varpi(X)$.  Due to the associativity of fractions
\cite[Section~6]{markl:ba}, both formulas give the same element of
$\FrFuK(\en,\me)$.  It is also possible to write a non-inductive formula
for $\varpi(X)$ based on the technique of {\em block transversal
matrices\/} developed in \cite{sanenlidze-umble:HHA11}.
\end{Remark}

\begin{Example}
If 
\[
X = \ \raisebox{-.5em}{\uuuua},
\]
we are in Case~3, with $T = U_1$ the up-rooted $2$-corollas $\fgen 12$,
and $U_2$ the exceptional tree $\except$. We have
\[
\raisebox{-1em}{}
X_1 =
\unitlength=0.03em
\begin{picture}(120.00,40.00)(0,0)
\put(10,-100){
\thicklines
\qbezier(80.00,120.00)(90.00,130.00)(100.00,140.00)
\qbezier(60.00,140.00)(70.00,130.00)(80.00,120.00)
\qbezier(20.00,100.00)(30.00,90.00)(40.00,80.00)
\qbezier(0.00,80.00)(10.00,90.00)(20.00,100.00)
\put(80.00,120.00){\line(0,-1){40.00}}
\put(20.00,140.00){\line(0,-1){40.00}}
\thinlines
\put(0.00,120.00){\line(1,0){100.00}}
\put(0.00,100.00){\line(1,0){100.00}}
}
\end{picture}
\ \mbox { and } \
X_2 =
\unitlength=0.03em
\begin{picture}(120.00,40.00)(0,0)
\put(10,-110){
\thicklines
\qbezier(80.00,120.00)(90.00,130.00)(100.00,140.00)
\qbezier(60.00,140.00)(70.00,130.00)(80.00,120.00)
\put(80.00,120.00){\line(0,-1){20.00}}
\put(40.00,140.00){\line(0,-1){40.00}}
\thinlines
\put(20.00,120.00){\line(1,0){80}}
}
\end{picture}.
\]
Both $X_1$ and $X_2$ fall into Case~1, and $\varpi(X_1) = \xi_1^2 \circ
\xi^1_2$ while $\varpi(X_2) = \xi^2_1$. Formula~(\ref{rakovina??})
gives
\[
\varpi(X) = \frac{\xi^1_2 \hskip .5em
  \xi^1_2}{\frac{\xi^2_1}{\xi^1_2}\hskip .5em \xi^2_1}=
  \frac{\gen12\gen12}{\dvojiteypsilon\gen21}.
\]
The rightmost term is obtained by depicting $\xi^\en_\me$ as 
an oriented corolla with $\me$ inputs and $\en$ outputs. 
The same notation is used in the 5th column of
the table of Figure~\ref{celeste} which lists elements in
the image $\varpi(\lT^2_3)$.
\end{Example}

Let us formulate the main result of this note which relates the
canonical projection of Definition~\ref{sec:compl-pairs-with} with the
map $\varpi$.

\begin{theoremC}
\label{zitra_do_Lnar?}
Let $X',X'' \in \lT^\en_\me$ be two complementary pairs of trees. Then
\[
\hskip .9em\varpi(X') = \varpi(X'') \ \mbox {(equality in $\FrFuK(\en,\me)$)}
\]
if and only if 
\[
\pi(X') = \pi(X'') \ \mbox{(equality in $\zT^\en_\me$),}\hskip 1.5em
\] 
so the image of $\varpi : \lT^\en_\me \to \Fr$ is isomorphic to $\zT^\en_\me$.
\end{theoremC}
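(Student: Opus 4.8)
The plan is to show that the map $\varpi$ and the canonical projection $\pi$ have exactly the same fibres. First I would reduce the right-hand condition to something combinatorial: by Proposition~\ref{sec:compl-pairs-with-3} a zone function on a fixed pair $(U,D)$ is determined by the induced three-valued relation between $\vert(U)$ and $\vert(D)$, and by Proposition~\ref{Jarca_s_M1_na_chalupe} that relation is the same as the one read off from the level function. Hence, for $X'=(U',D',\ell')$ and $X''=(U'',D'',\ell'')$, one has $\pi(X')=\pi(X'')$ if and only if $U'=U''$, $D'=D''$, and for all $u\in\vert(U)$, $v\in\vert(D)$ the trichotomy "$\ell'(u)\lessgtr\ell'(v)$" agrees with its primed version. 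Call two complementary pairs \emph{co-levelled} when this holds; the theorem becomes: $\varpi(X')=\varpi(X'')$ iff $X'$ and $X''$ are co-levelled.

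For the implication $\pi(X')=\pi(X'')\Rightarrow\varpi(X')=\varpi(X'')$, I would induct on $\#\vert(D)$, following the three-case construction of \S\ref{sec:map-pi:-ltm_n}. The key observations are: (a) which of Cases~1--3 applies is decided solely by the relation between the root vertices of $U$ and $D$, hence is a co-levelling invariant; (b) the decompositions used in~(\ref{umru_na_rakovinu_plic?}), Figure~\ref{pojedu?} and Figure~\ref{diagnoza} depend only on the tree structures of $U$ and $D$ and on which vertices of $U$ lie above a prescribed level, again co-levelling invariants; (c) the restrictions $X_i=r_{U_i,c^b_1}(X)$ and $Y_j=r_{T,D_j}(X)$ of Case~3 have strictly fewer $D$-vertices, and co-levelling passes to restrictions because the level renormalisation in~(\ref{eq:9}) is an order isomorphism onto its image. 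Since $\varpi$ of a bare planar tree depends only on that tree, formulas~(\ref{eq:12}), (\ref{eq:13}), (\ref{rakovina??}) then show $\varpi(X)$ depends only on the co-levelling class, i.e. only on $\pi(X)$. This yields a well-defined map $\overline\varpi:\zT^\en_\me\to\free(\Xi)\FuK(\en,\me)$ with $\overline\varpi\circ\pi=\varpi$, and it remains to prove $\overline\varpi$ is injective.

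For injectivity I would represent $g=\overline\varpi(U,D,z)$ by its canonical normal form in the free PROP $\free(\Xi)$ — a graph $\Gamma$ whose vertices are the generators $\xi^b_a$ — and recover $(U,D,z)$ by reversing the construction, inducting on the number of $\xi^a_1$-type vertices of $\Gamma$ (equivalently on $\#\vert(D)$). Near the $\en$ output legs $\Gamma$ either begins with a maximal subgraph all of whose vertices are of type $\xi^a_1$, which forces Case~1: this subgraph is $\varpi(D)$, its removal leaves $\varpi(U)$, so $U$ is recovered by injectivity of the classical map $\varpi:\lT_\me\to\free(\xi_2,\xi_3,\dots)$ of~\S\ref{ted_umiram_na_plice} and $z$ is the type-$(\mathrm{UD})$ zone function; or it begins with a layer involving generators with $\ge 2$ inputs that also feed the down-part, which signals Cases~2--3, where one reads off the outermost barrier together with the grafting data $T,U_1,\dots,U_a,D_1,\dots,D_b$, recovers the top portion of $U$, the root corolla of $D$, and the outermost zone of $z$, and recurses on the $Y_j$. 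In particular this reconstruction extracts $U$ and $D$, so the case $U'\neq U''$ or $D'\neq D''$ is covered as well; once $\overline\varpi$ is injective, $\operatorname{Im}\varpi\cong\zT^\en_\me$ follows since $\pi$ is surjective and order-preserving (Definition~\ref{sec:compl-pairs-with-1}).

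The main obstacle is the last paragraph: one must verify that the "layered" shape of $\Gamma$ — which generators belong to the up-part, which to the down-part, and where the barriers sit — is canonically determined, so that the reconstruction does not depend on how $g$ is written. This relies on the uniqueness of the graph normal form in $\free(\Xi)$ (there are no relations among the $\xi^b_a$), plus careful bookkeeping that the gluings in~(\ref{eq:13}) and~(\ref{rakovina??}) never produce a $\xi^b_a$-corolla that could equally have come from a different triple $(U,D,z)$; the non-inductive block-transversal-matrix description alluded to in the Remark following Figure~\ref{diagnoza} is the cleanest tool for making this normal form explicit and completing the bookkeeping.
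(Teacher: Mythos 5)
Your overall architecture coincides with the paper's: the same reduction of $\pi(X')=\pi(X'')$ to agreement of the relative heights of the $U$- and $D$-vertices via Propositions~\ref{sec:compl-pairs-with-3} and~\ref{Jarca_s_M1_na_chalupe}, the same induction on $\#\vert(D)$ through Cases 1--3 for the forward implication (including the commutation of the restrictions with $\pi$), and the same reconstruction strategy for the converse. The forward half of your argument is complete as written.

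The converse, however, has a gap exactly where you flag ``the main obstacle,'' and ``uniqueness of the graph normal form plus careful bookkeeping'' does not by itself close it. Two concrete statements are needed and neither is supplied. First, you must be able to read off from the graph representing $\varpi(X)$ \emph{which} of Cases 1--3 the pair $X$ fell into, before knowing $X$; the paper does this by calling an internal edge \emph{special} when it meets a corolla with a single output or a single input, proving (Lemma~\ref{Jarka_se_zas_opila}) that a fraction $\frac{A_1\cdots A_l}{B_1\cdots B_k}$ is special iff all constituents are special and $k=1$ or $l=1$, and observing that $\varpi(X)$ is special precisely in Cases 1 and 2, with Case 2 further distinguished by the presence of a vertex $c^b_a$ with $a,b\ge 2$. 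Second, in Case 3 you need a cancellation statement for fractions: if $\frac{A'_1\cdots A'_l}{B_1\cdots B_k}=\frac{A''_1\cdots A''_l}{B_1\cdots B_k}$ then $A'_i=A''_i$ for all $i$; this is what lets you conclude that the $\varpi(Y_j)$ are determined once the $\varpi(X_i)$ have been identified as the maximal generalized subtrees at the inputs. Without these two ingredients your recursion is defined only on chosen fraction expressions, not on elements of $\Fr$, which is the whole content of injectivity. A small additional slip: the classical $\varpi:\lT_\me\to\free(\xi_2,\xi_3,\ldots)$ is not injective (it forgets levels); what you need, and what is true, is that $\varpi(U)$ determines the underlying planar tree $U$.
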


Our {\em proof\/} of this theorem occupies the rest of this section.

\subsection*{Proof that $\pi(X') = \pi(X'')$ implies $\varpi(X') = \varpi(X'')$}
Parallel to $r_{\overline U, \overline D}(U,D,\ell) \in
\lT^{\overline \en}_{\overline \me}$ of~(\ref{eq:9}) there is a
similar restriction 
$s_{\overline U, \overline D}(U,D,z) \in
\zT^{\overline \en}_{\overline \me}$ defined for each $(U,D,z) \in
\zT^\en_\me$ and subtrees $\overline U \subset U$ and $\overline D \subset D$.
They commute with the canonical
projection in the sense that, for $X = (U,D,\ell) \in \lT^\en_\me$,
\begin{equation}
\label{eq:11}
\pi\big(r_{\overline U, \overline D}(X)\big) = 
s_{\overline U, \overline D}\big(\pi(X)\big).  
\end{equation}

The restriction $s_{\overline U,\overline D}$ can be defined along
similar lines as $r_{\overline U,\overline D}$. The only subtlety is
that the zone function restricted to $\vert (\overline U) \cup \vert
(\overline D)$ need not satisfy (ii) of
Definition~A, so we need to identify, in
its image, adjacent zones of the same type. We leave the details to
the reader.

The construction of $\varpi(X)$ given in \S\ref{sec:map-pi:-ltm_n} was
divided into three cases, determined by the relative positions of the
root vertices of $U$ and $D$. This information is, by
Proposition~\ref{Jarca_s_M1_na_chalupe}, retained by
the induced zone function of $\pi(X)$. Therefore the case into which $X$
falls depends only on the projection $\pi(X)$. 

Let $X',X'' \in \lT^\en_\me$ be such $\pi(X') = \pi(X'')$. Then $X'$ and
$X''$ may differ only by the level functions, i.e.~$X' =
(U,D,\ell')$ and $X''= (U,D,\ell'')$.  Let us proceed by induction on
the number of vertices of $D$.

If one (hence both) of $X'$, $X''$ falls into Case~1 or Case~2 of our
construction of $\varpi$, then clearly $\varpi(X'') = \varpi(X'')$ since in
these cases $\varpi$ manifestly depends only on the trees $U$ and $D$ not
on the level function. The induced zone function of
$\pi(X') = \pi(X'')$ must be of type $(\rm D \rm U)$ in Case~1 and $(\rm
D\rm B\rm U)$ in Case~2.

In Case~3 we observe first that the exact form of the decomposition in
Figure~\ref{diagnoza} depends only on the relative positions of the
root vertex of $D$ and the vertices of $U$. It is therefore, by
Proposition~\ref{Jarca_s_M1_na_chalupe}, determined by the induced
zone function, so it is the same for both $X'$ and $X''$. Now we
invoke the commutativity~(\ref{eq:11}) to check that
\[
\pi(X'_i) =  \pi\big(r_{U_i,c^b_1}(X')\big) = s_{U_i,c^b_1}\big(\pi(X')\big) =
s_{U_i,c^b_1}\big(\pi(X'')\big) =  \pi\big(r_{U_i,c^b_1}(X'')\big) = \pi(X''_i)
\]
for each $1 \leq i \leq a$. Similarly we verify that $\pi(Y'_j) =
\pi(Y''_j)$ for each $1 \leq j \leq b$. By the induction assumption, 
\[
\varpi(X'_i) = \varpi(X''_i) \ \mbox { and } \ 
\varpi(Y'_j) = \varpi(Y''_j)\ \mbox { for }\
1 \leq i \leq a,\ 1 \leq j \leq b,  
\]
therefore
\[
\varpi(X') = 
\frac{\varpi(Y'_1) \cdots \varpi(Y'_b)}{\varpi(X'_1)\cdots\varpi(X'_a)} =
\frac{\varpi(Y''_1) \cdots \varpi(Y''_b)}{\varpi(X''_1)\cdots\varpi(X''_a)} = \varpi(X'').
\]
This finishes our proof of the implication $\pi(X') = \pi(X'')
\Longrightarrow \varpi(X') = \varpi(X'')$.

\subsection*{Proof that $\varpi(X') = \varpi(X'')$ implies $\pi(X') = \pi(X'')$.}
Let us show that $\pi(X)$ is uniquely determined by $\varpi(X) \in
\FrFuK(\en,\me)$. As we already remarked, elements of $\Fr$ are represented
by directed graphs $G$ whose vertices are corollas $c^b_a$ with $a$
inputs and $b$ outputs, where $a,b \geq 1$, $(a,b) \not= (1,1)$. Let
$e$ be an internal edge of $G$, connecting an output of $c^s_r$ with an
input of $c^v_u$. We say that $e$ is {\em special\/} if either $s=1$
or $u=1$. The graph $G$ is {\em special\/} if all its internal edges
are special.  Finally, and element of $\Fr$ is {\em special\/} if it
is represented by a special graph.  We have the following simple lemma
whose proof immediately follows from the definition of the fraction.

\begin{lemma}
\label{Jarka_se_zas_opila}
Let $A_1, \ldots A_l, B_1, \ldots B_k \in \Fr$ be as in
Definition~\ref{3}. The fraction
\[
\frac{A_1 \cdots A_l}{B_1 \cdots B_k}
\]  
is special if and only if all 
$A_1, \ldots A_l, B_1, \ldots B_k$ are special and if
$k=1$ or $l=1$.
\end{lemma}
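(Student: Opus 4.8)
The plan is to make the internal edges of the graph representing the fraction completely explicit and then to test the definition of ``special'' edge by edge. Write $\Phi$ for the fraction $\frac{A_1\cdots A_l}{B_1\cdots B_k}$. First I would recall from the definition of a fraction (Definition~\ref{3}) that each $A_i$ and each $B_j$ is connected, that in $\Phi$ every $A_i$ has exactly $k$ inputs and every $B_j$ exactly $l$ outputs, and that the graph of $\Phi$ is obtained by placing the graphs of $A_1,\dots,A_l$ side by side above those of $B_1,\dots,B_k$ and joining the $j$-th input of $A_i$ to the $i$-th output of $B_j$ by a new edge $e_{ij}$. Hence the internal edges of $\Phi$ are precisely the internal edges of the $A_i$'s and the $B_j$'s together with the $kl$ edges $e_{ij}$, so $\Phi$ is special if and only if each $A_i$ and each $B_j$ is special and, in addition, every $e_{ij}$ is special. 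Since ``each $A_i,B_j$ special'' is visibly forced whenever $\Phi$ is, the lemma reduces to the claim that, granting all the pieces are special, every $e_{ij}$ is special if and only if $k=1$ or $l=1$.

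For the ``if'' direction, assume $l=1$ (the case $k=1$ is symmetric). Then each $B_j$ has a single output leg; since its graph is acyclic, every directed path leaves through that one leg, so the corolla $c^s_r$ incident to the leg cannot have a second output (such an output would start a directed path forced to exit through the same leg and hence to return to $c^s_r$, contradicting acyclicity). Thus $s=1$ and each $e_{1j}$ is special, so with all pieces special $\Phi$ is special.

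For the ``only if'' direction I would establish the following Claim and its input/output dual: \emph{a special connected graph in $\Fr$ with at least two output legs has every output leg incident to a corolla with at least two outputs.} The point is that in a special graph every internal edge whose target is a corolla $c^1_r$ (necessarily with $r\geq 2$, since the generators exclude $(1,1)$) must have its source of the same shape $c^1_{\bullet}$, because such a target has $\geq 2$ inputs; consequently, if an output leg sat on a corolla $Y=c^1_r$, then all ancestors of $Y$ would be of this type, each with its single output feeding into the next, so the set consisting of $Y$ and its ancestors would admit no edge into it from outside and no edge out of it except the boundary leg at $Y$ -- by connectedness it would be the entire graph, which would then have just the one output leg, a contradiction. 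Granting the Claim and its dual, if $k\geq 2$ and $l\geq 2$ then every $B_j$ (having $l\geq 2$ outputs) has all of its output legs on corollas with $\geq 2$ outputs and every $A_i$ (having $k\geq 2$ inputs) has all of its input legs on corollas with $\geq 2$ inputs, so each $e_{ij}$ joins a $\geq 2$-output corolla to a $\geq 2$-input corolla and is therefore not special; hence a special $\Phi$ forces $k=1$ or $l=1$. Together with the ``if'' direction this proves the lemma.

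I expect the edge bookkeeping of the first paragraph and the ``if'' direction to be routine; the main obstacle is the Claim of the last paragraph, specifically checking that ``$Y$ together with its ancestors'' really is cut off from the rest of the graph -- this is the step where acyclicity of graphs in $\Fr$ and the constraint $(a,b)\neq(1,1)$ on the generators are used essentially.
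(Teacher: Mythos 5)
The paper offers no argument for this lemma beyond the remark that it ``immediately follows from the definition of the fraction,'' so your detailed write-up is the only real proof on the table, and its skeleton --- reducing everything to the specialness of the $kl$ connecting edges $e_{ij}$, the acyclicity argument in the ``if'' direction, and the Claim about special connected graphs with at least two output legs --- is correct and is surely what the author had in mind. There is, however, one genuine gap: you ``recall'' from Definition~\ref{3} that each $A_i$ and each $B_j$ is connected, but Definition~\ref{3} imposes no such condition (the constituents are arbitrary elements of $\Fr$ of the appropriate biarities), and your ``only if'' direction collapses without it. Concretely, take $k=l=2$, $A_1=A_2=\xi^1_2$ and $B_1=B_2=\xi^1_2\boxtimes\xi^1_2$ (two outputs, four inputs). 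The resulting fraction has exactly four internal edges, each with source one of the corollas $c^1_2$ inside some $B_j$, hence each special; so the fraction is special and all constituents are special, yet $k=l=2$. Your Claim is not violated here, because it presupposes connectedness: a disconnected $B_j$ with two output legs can have each leg sitting on a one-output corolla in its own component. (The ``if'' direction does survive without an extra hypothesis: a constituent with a single output leg, resp.\ a single input leg, is automatically connected, since every component of an acyclic graph assembled from corollas $c^b_a$ with $a,b\geq 1$ must contain at least one output leg and one input leg.)

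So the lemma as literally stated is false, and what your argument actually establishes is the corrected statement with connectedness of the $A_i$ and $B_j$ added as a hypothesis. That corrected statement is all the paper needs: the lemma is applied only to fractions whose constituents are $\varpi(X_i)$ and $\varpi(Y_j)$ for complementary pairs of trees, and these are connected by an easy induction over the three cases of the construction of $\varpi$ in \S\ref{sec:map-pi:-ltm_n} (each case glues connected pieces to a common corolla or composes them along a full set of connecting edges). You should either prove that connectedness explicitly where the lemma is invoked, or build it into the hypotheses; as written, the first sentence of your proof attributes to Definition~\ref{3} a property it does not have, and that is precisely the point where the argument can break. Contrary to your closing remark, the delicate step is not the ``$Y$ together with its ancestors is cut off'' verification (which is fine), but this unstated connectedness assumption.
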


Lemma~\ref{Jarka_se_zas_opila} implies that $\varpi(X)$ is special if
and only if $X$ falls into Case~1 or Case~2 of
\S\ref{sec:map-pi:-ltm_n}. It is also clear that $X$ falls into Case~2
if and only if $\varpi(X)$ is special and the graph representing
$\varpi(X)$ has a (unique) vertex $c^b_a$ with $a,b \geq 2$. Therefore
$\varpi(X)$ bears the information to which case of its construction $X
= (U,D,\ell) \in \lT^\en_\me$ falls.

Suppose that $X$ falls to Case~1 of our definition of
$\varpi(X)$. Clearly, formula (\ref{eq:12}) uniquely determines the planar
up-rooted tree $U$ and a down-rooted tree $D$ such that $X =
(U,D,\ell)$. The only possible zone function $z$ for $\pi(X) = (U,D,z)$
is of type $(\rm D\rm U)$ with
\[
z\big(\vert(D)\big) = \{1\},\ z\big(\vert(U)\big)  =  \{2\}.
\]

If  $X = (U,D,\ell)$ falls
into Case~2 of our construction of $\varpi(X)$, we argue as in the
previous paragraph.
Formula~(\ref{eq:13}) uniquely determines
$\varpi(U_1),\ldots,\varpi(U_a)$ and $\varpi(Y_1),\ldots,\varpi(Y_b)$ and
therefore also the trees $U_1,\ldots,U_a,Y_1,\ldots,Y_b$ in the
decomposition in Figure~\ref{pojedu?}. Therefore also $U$ and $D$ are
uniquely determined, and clearly the only possible zone function $z$ for
$\pi(X) = (U,D,z)$ is of type $(\rm D,\rm B,\rm U)$ with
\begin{align*}
z\big(\vert(D_j)\big) = \{1\},&\ z\big(\vert(U_i)\big)  =  \{3\}, 
\\
z(\mbox {root vertex~of $U$})=&\ z(\mbox {root vertex~of $D$}) =  \{2\},
\end{align*}
$1 \leq i \leq a$, $1 \leq j \leq b$.

Assume that $X$ falls into Case~3 of our construction. Let us call,
only for the purposes of this proof, a directed graph a {\em
generalized tree\/}, if it is obtained by grafting directed up-rooted trees
$S_1,\rada,S_u$ into the inputs of the directed corolla $c^v_u$, with
some $u,v \geq 1$, $(u,v) \not= (1,1)$. So a generalized tree is a
directed graph of the form
\[
\raisebox{-4.7em}{}
{
\unitlength=.07em
\begin{picture}(120.00,25.00)(0.00,66.00)
\thicklines
\put(110.00,7.00){\makebox(0.00,0.00){\scriptsize $S_u$}}
\put(40.00,7.00){\makebox(0.00,0.00){\scriptsize $S_2$}}
\put(10.00,7.00){\makebox(0.00,0.00){\scriptsize $S_1$}}
\put(70.00,45.00){\makebox(0.00,0.00){$\cdots$}}
\put(70.00,85){\makebox(0.00,0.00){$\cdots$}}
\put(120.00,0.00){\line(-1,5){10.00}}
\put(100.00,0.00){\line(1,0){20.00}}
\put(110.00,50.00){\line(-1,-5){10.00}}
\put(50.00,0.00){\line(-1,5){10.00}}
\put(30.00,0.00){\line(1,0){20.00}}
\put(40.00,50.00){\line(-1,-5){10.00}}
\put(60.00,70.00){\line(-1,-1){20.00}}
\put(60.00,70.00){\line(-1,1){20.00}}
\put(20.00,0.00){\line(-1,5){10.00}}
\put(0.00,0.00){\line(1,0){20.00}}
\put(10.00,50.00){\line(-1,-5){10.00}}
\put(60.00,70.00){\line(5,-2){50.00}}
\put(80,70){\makebox(0.00,0.00)[l]{\scriptsize $c^v_u$}}
\put(60.00,70.00){\line(5,2){50.00}}
\put(60.00,70.00){\line(-5,-2){50.00}}
\put(60.00,70.00){\line(-5,2){50.00}}
\end{picture}}
\]
where we keep our convention that all edges are oriented to point
upwards.  Since $X$ falls into Case~3, we know that $\varpi(X)$ is as
in~(\ref{rakovina??}), for some $X_i$, $Y_j$, $1 \leq i \leq a$, $1
\leq j \leq b$. We moreover know that the complementary pairs $X_i$
fall into Case~1 or Case~2 of the construction of $\varpi(X_i)$.

Now let $G_1,\ldots,G_r$ be the maximal generalized trees containing
the inputs of the graph representing $\varpi(X)$, numbered from left to
right.  It is clear from the definition of the fraction that $r=a$ and
that $G_i$ is, for each $1 \leq i \leq a$, the graph representing
$\varpi(X_i)$. So all $\varpi(X_1),\ldots,\varpi(X_a)$ are determined by
$\varpi(X)$.  A simple argument shows that if
\[
\frac{A'_1 \cdots A'_l}{B_1 \cdots B_k} 
 = 
\frac{A''_1 \cdots A''_l}{B_1 \cdots B_k} 
\]
for some $A'_1,\ldots, A'_l,A''_1,\ldots, A''_l \in \FrFuK(*,k)$ and 
$B_1,\ldots, B_k \in \FrFuK(l,*)$, then
$A'_i = A''_i$ for each $1 \leq i \leq l$. We therefore see that also
$\varpi(Y_1),\ldots,\varpi(Y_b)$ are determined by $\varpi(X)$.

Let us summarize what we have. We know each $\varpi(X_i)$. Since the
construction of $\varpi(X_i)$ falls into Case~1 or Case~2, we know, as we
have already proved, the trees $U_1,\ldots,U_a$ in
Figure~\ref{diagnoza}, and also the relative positions of
vertices of $U_1,\ldots,U_a$ and the root vertex of $D$.

Now we preform a similar analysis of $\varpi(Y_1),\ldots,\varpi(Y_b)$ and
repeat this process until we get trivial trees. It is clear that,
during this process, we fully reconstruct the trees $U,D$ in $X =
(U,D,\ell)$ and the relative positions of their vertices. By
Proposition~\ref{Jarca_s_M1_na_chalupe}, this uniquely determines the
zone function in $\pi(X) = (U,D,z)$. This finishes our proof of the
second implication.

\section{The particular case  $\EK^2_\me$}

In this section we analyze in detail the poset $\EK^2_\me$ for which the
notion of complementary pairs with zones takes a particularly simple form.

\subsection{Trees with a diaphragm.} 
\label{sec:trees-with-zones}
Let us consider the ordinal 
$\big\{(-\infty,1) < 1 < (1,+\infty)\big\}$. A~{\em diaphragm\/} of an
up-rooted tree $U$ is an order-preserving map 
\begin{equation}
\label{eq:6}
\zeta : \vert(U) \to \textstyle \big\{ (-\infty,1), 1, (1,+\infty)\big\}
\end{equation}
which is strictly order-preserving at $1$. 
By this we mean that, if
$\zeta(v') = \zeta(v'')= 1$ then neither $v' < v''$ nor  $v' >
v''$. We will denote $\bT^2_\me$ the set of all pairs $(U,\zeta)$, where
$U$ is an up-rooted tree with $\me$ leaves and $\zeta$ a diaphragm.

One may imagine a tree with a diaphragm as a planar up-rooted tree crossed by a
horizontal line, i.e.~a diaphragm, see the rightmost column of the table in
Figure~\ref{celeste} for examples.
It is convenient to introduce the following subsets of $\vert(U)$:
\[
\textstyle
\vert_{<1}(U) := \zeta^{-1}(-\infty,1),\ \vert_{1}(U) := \zeta^{-1}(1),\
\vert_{>1}(U) := \zeta^{-1}(1,+\infty)
\]
and the `closures'
\[
\vert_{\leq1}(U) := \vert_{<1}(U) \cup \vert_{1}(U)\ \mbox { and }
\vert_{\geq1}(U) := \vert_{>1}(U) \cup \vert_{1}(U).
\]

A {\em morphism\/} $\phi : (U',\zeta') \to (U'',\zeta'')$ of trees
with a diaphragm
is a morphism $\phi : U' \to U''$ of planar up-rooted trees which
preserves the closures, i.e.\ 
\[
\phi\big(\vert_1(U')\big) \subset \vert_1(U''),\
\phi\big(\vert_{< 1}(U')\big) \subset \vert_{\leq 1}(U''),
\ 
\phi\big(\vert_{> 1}(U')\big) \subset \vert_{\geq 1}(U'').
\]
We say that $(U',\zeta') < (U'',\zeta'')$ if there
exists a morphism $(U',\zeta') \to (U'',\zeta'')$.

\begin{proposition}
\label{Zitra_budu_chairmanem}
The posets $(\zT^2_\me,<)$ and $(\bT^2_\me,<)$ are, for each $\me \geq 1$,
naturally isomorphic.
\end{proposition}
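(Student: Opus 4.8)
The plan is to construct explicit mutually inverse maps between $\zT^2_\me$ and $\bT^2_\me$ and check that they are order-preserving. The key observation is that for $\en = 2$ the down-rooted tree $D$ must be the $2$-corolla $\fgen21$ (since a down-rooted tree with $2$ leaves has a single vertex), so an element of $\zT^2_\me$ is really a pair $(U,z)$ where $z : \vert(U) \cup \{*\} \epi \{1,\ldots,l\}$ is a zone function and $*$ denotes the unique vertex of $D$. The value $z(*)$ is always a barrier unless $\me=1$ (when $U=\except$ has no vertices and $z(*)$ is the unique down-zone); more generally, by condition (i) of Definition~A applied at the barrier $z(*)$, and condition (ii), the image $\{1,\ldots,l\}$ decomposes as: an initial block of up-zones and down-zones with no two adjacent of the same type, then possibly the barrier $z(*)$, then a final block, again alternating. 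Using Proposition~\ref{sec:compl-pairs-with-3} (relative heights determine $z$), one sees that $(U,z)$ is completely recovered from knowing which vertices of $U$ lie strictly below $z(*)$, which lie at the same level, and which lie strictly above. That is exactly the data of a diaphragm.

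Concretely, I would define $\Phi : \zT^2_\me \to \bT^2_\me$ by $\Phi(U,\fgen21,z) := (U,\zeta)$ where
\[
\zeta(v) :=
\cases{(-\infty,1)}{if $z(v) < z(*)$}{1}{if $z(v) = z(*)$}
\]
with the remaining case $\zeta(v) := (1,+\infty)$ when $z(v) > z(*)$; in the exceptional situation $\me = 1$ there is nothing to define since $\vert(U) = \emptyset$. One checks $\zeta$ is order-preserving because $z$ is, and strictly order-preserving at $1$ because $z$ is strictly order-preserving on barriers and $z(*)$ is a barrier whenever $\vert_1(U) \neq \emptyset$ — so two vertices of $U$ sharing the value $z(*)$ are unrelated, by condition (i) of Definition~A. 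For the inverse $\Psi : \bT^2_\me \to \zT^2_\me$, given $(U,\zeta)$ I would build the zone function by: restricting the vertex partial order of $U$ to each of $\vert_{<1}(U)$ and $\vert_{>1}(U)$, refining by the natural level stratification (the "highest possible level" recipe, as in \S\ref{zas_na_mne_neco_leze}, applied separately below and above the diaphragm), inserting the barrier $z(*)$ between the two blocks, and merging adjacent zones of the same type — the last step is forced precisely by condition (ii). One then verifies $\Phi \circ \Psi = \id$ and $\Psi \circ \Phi = \id$ using Proposition~\ref{sec:compl-pairs-with-3}: both composites fix the trees and induce the same relative-height data on vertices, hence agree.

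It remains to check that both maps preserve $<$. This is where I expect the only real bookkeeping. Unwinding the definitions: a morphism $(U',D',z') \to (U'',D'',z'')$ in $\zT^2_\me$ forces $D' = D'' = \fgen21$ and $\phi_d = \id$, so it is a contraction $\phi : U' \to U''$ together with an order-preserving $\hat\phi : \{1,\ldots,l'\} \to \{1,\ldots,l''\}$ respecting closures. I would show this is equivalent to $\phi$ preserving the diaphragm closures $\vert_1, \vert_{\le 1}, \vert_{\ge 1}$ in the sense of the definition of morphism in $\bT^2_\me$. The forward direction: $\hat\phi$ must send the barrier $z'(*)$ into $\overline{\hat\phi(z'(*))}$, and since $\phi_d(*) = *$ we get $z''(*) \in \overline{\hat\phi(z'(*))}$, which pins down where the diaphragm of $U''$ sits relative to the image, and the closure condition on vertices of $U'$ then translates into the three inclusions. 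Conversely, given a diaphragm-preserving $\phi : U' \to U''$, one constructs the required $\hat\phi$ on zones by tracking which zone of $z''$ each zone of $z'$ lands in, again using Proposition~\ref{sec:compl-pairs-with-3} to see this is well-defined. The main obstacle, such as it is, is purely notational: being careful that the "merge adjacent same-type zones" operation interacts correctly with the closures $\overline i$, i.e.\ that $\hat\phi$ landing in a closure $\overline i$ on the $\zT$ side corresponds exactly to landing in one of $\vert_{\le 1}$ or $\vert_{\ge 1}$ on the $\bT$ side. Once the dictionary is set up this is immediate, and the naturality in $\me$ is automatic since no choices were made.
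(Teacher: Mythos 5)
Your proposal is correct and follows essentially the same route as the paper: the paper's proof consists of exactly your map $(U,\fgen21,z)\mapsto(U,\zeta)$, with $\zeta(v)$ determined by comparing $z(v)$ to $L=z(*)$, followed by the assertion that bijectivity and order-preservation are ``easy to see,'' which is precisely the bookkeeping you carry out. One small slip: your parenthetical claim that $z(*)$ is always a barrier for $\me\geq 2$ is false (in the types $({\rm D}{\rm U})$ and $({\rm U}{\rm D})$ the value $z(*)$ is a down-zone), but this does not damage the argument, since where it matters you correctly use only the weaker fact that $z(*)$ is a barrier whenever some vertex of $U$ shares its value.
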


\begin{proof}
For $X = (U,\fgen21,z)  \in \zT^2_\me$, denote $L$ the value of $z$ on the
vertex of $\fgen21$. Define $\zeta$ by
\[
\textstyle
\zeta(v) := \left\{
\begin{array}{ll}
(-\infty,0)&\mbox { if } z(v) < L,
\\
1&\mbox { if } z(v) = L,\ \mbox {and}
\\
(1,+\infty)&\mbox { if } z(v) > L.
\end{array}
\right.
\] 
It is easy to see that $(U,\zeta)$ is a tree with a diaphragm, that
the correspondence $(U,z) \mapsto (U,\zeta)$ is one-to-one and that it
preserves the partial orders.
\end{proof}

The natural projection $\pi: \lT^2_\me \to \zT^2_\me$ can be, in terms of
trees with a diaphragm, described as follows. Let $X =
(U,\fgen21,\ell) \in \lT^2_\me$ and assume that the vertex of $\fgen21$
is placed at level $L$. Then $\pi(X) := (U,\zeta)$, with the diaphragm
\[
\textstyle
\zeta(v) := \left\{
\begin{array}{ll}
(-\infty,0)&\mbox { if } \ell(v) < L,
\\
1&\mbox { if } \ell(v) = L,\ \mbox {and}
\\
(1,+\infty)&\mbox { if } \ell(v) > L.
\end{array}
\right.
\] 

\begin{Example}
The $\pi$-images of complementary pairs in $\lT^2_3$ are listed in
the rightmost column of the table in Figure~\ref{celeste}.
\end{Example}

\begin{Example}
Figure~\ref{Jak_dlouho_budu_zit?} illustrates the projection $P^2_4
\to K^2_4$. It shows the face 
poset of a square face of the $3$-dimensional $P^2_4$
together with the corresponding complementary pairs in $\lT^2_4$ and
its projection, which is in this case the poset of the interval
indexed by the corresponding trees with a diaphragm. 
\begin{figure}
\[
{
\unitlength=.09em
\begin{picture}(250.00,140.00)(0.00,-10.00)
\put(60.00,60.00){\makebox(0.00,0.00){$\DEVET$}}
\put(120.00,60.00){\makebox(0.00,0.00)[l]{$\OSM$}}
\put(0.00,60.00){\makebox(0.00,0.00)[r]{$\SEDM$}}
\put(60.00,0.00){\makebox(0.00,0.00)[t]{$\SEST$}}
\put(60.00,120.00){\makebox(0.00,0.00)[b]{$\PET$}}
\put(120.00,0.00){\makebox(0.00,0.00)[tl]{$\CTYRI$}}
\put(0.00,0.00){\makebox(0.00,0.00)[tr]{$\TRI$}}
\put(120.00,120.00){\makebox(0.00,0.00)[bl]{$\DVA$}}
\put(0.00,120.00){\makebox(0.00,0.00)[br]{$\JEDNA$}}
\put(250.00,60.00){\makebox(0.00,0.00)[l]{$\DVANACT$}}
\put(240.00,0.00){\makebox(0.00,0.00)[t]{$\JEDENACT$}}
\put(240.00,120.00){\makebox(0.00,0.00)[b]{$\DESET$}}
\thicklines
\put(190.00,63.00){\makebox(0.00,0.00)[b]{\scriptsize $\pi$}}
\put(165.00,60.00){\vector(1,0){60.00}}
\put(240.00,10.00){\makebox(0.00,0.00){$\bullet$}}
\put(240.00,110.00){\makebox(0.00,0.00){$\bullet$}}
\put(110.00,10.00){\makebox(0.00,0.00){$\bullet$}}
\put(10.00,10.00){\makebox(0.00,0.00){$\bullet$}}
\put(110.00,110.00){\makebox(0.00,0.00){$\bullet$}}
\put(10.00,110.00){\makebox(0.00,0.00){$\bullet$}}
\put(240.00,10.00){\line(0,1){100.00}}
\put(110.00,10.00){\line(-1,0){100.00}}
\put(110.00,110.00){\line(0,-1){100.00}}
\put(10.00,110.00){\line(1,0){100.00}}
\put(10.00,10.00){\line(0,1){100.00}}
\end{picture}}
\]
\caption{\label{Jak_dlouho_budu_zit?}
The projection of a face of $P^2_4$ to $K^2_4$. The faces of the
interval in $K^2_4$ (right) are indexed by trees with a diaphragm.}
\end{figure}
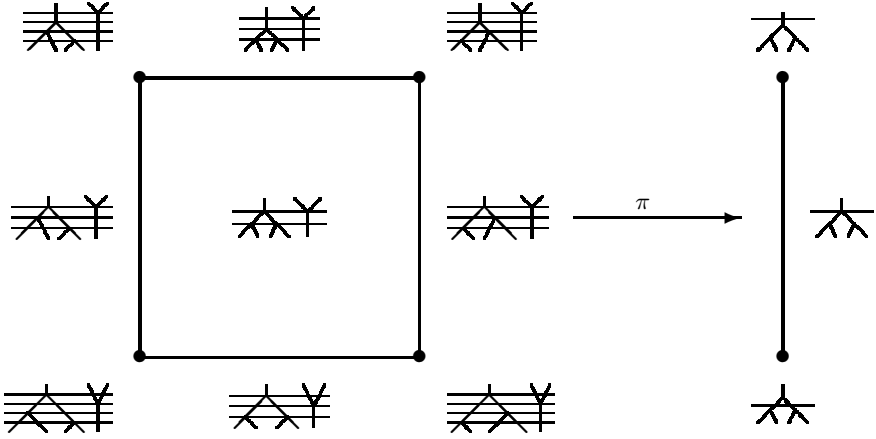
\end{Example}

As an exercise, we recommend describing the map $\varpi: \zT^2_\me \to
\Fr\Fuk 2\me$ in terms of trees with a diaphragm.
One may generalize the above description of the poset 
$\zT^\en_\me$ also to $\en > 2$. In this case, the tree $U$
corresponding to $(U,D,z) \in \zT^\en_\me$ may have {\em several\/} diaphragms,
depending on the relative positions of the vertices of $D$. The
combinatorics of this kind of description becomes, however, unmanageably
complicated with growing $\en$.

\subsection{Relation to the multiplihedron}
Multiplihedra appeared in the study of homotopy multiplicative maps
between $A_\infty$-spaces~\cite{jim:book}. The $\me$-th multiplihedron
$J_\me$ is a convex polytope of dimension $\me-1$ whose vertices
correspond to ways of bracketings $\me$ variables and applying an
operation. As explained in~\cite{forcey:hull}, the faces of $J_\me$ are
indexed by {\em painted $\me$-trees\/} which are, by definition,
directed (rooted) planar trees with $\me$ leaves, two types of edges --
black and white -- and vertices of the following two types:
\begin{itemize}
\item[(i)] 
vertices with at least two inputs 
whose all adjacent edges are of the same color, or
\item[(ii)] 
vertices whose  all inputs are white and whose output is black.
\end{itemize}

The set $\pT_\me$  of all painted $\me$-trees has a
partial order $<$ induced by contracting the edges. The
poset $\J_\me := (\pT_\me,<)$ is then the poset of faces of the $\me$-th
multiplihedron $J_\me$. We believe that Figure~\ref{fig:psano_v_Srni}
makes the above definitions clear. 

\begin{figure}
{
\unitlength=.85pt
\begin{picture}(200.00,210.00)(0.00,-10.00)
\put(185.00,60){\makebox(0.00,0.00)[tl]{$\painteduuuucutcutmodmod$}}
\put(100.00,5.00){\makebox(0.00,0.00)[t]{$\painteduuuucutcutmodmodmod$}}
\put(15.00,60.00){\makebox(0.00,0.00)[rt]{$\paintedascicut$}}
\put(15.00,125.00){\makebox(0.00,0.00)[br]{$\painteduuuucutcut$}}
\put(100.00,180.00){\makebox(0.00,0.00)[b]{$\painteduuuucutcutmod$}}
\put(185.00,125.00){\makebox(0.00,0.00)[lb]{$\paintedpoiucutmmod$}}
\put(170.00,10.00){\makebox(0.00,0.00)[lt]
{$\painteduuuuacutmodred\raisebox{5pt}{\scriptsize $=f(\bullet)f(\bullet\bullet)$}$}}
\put(30.00,10.00){\makebox(0.00,0.00)[tr]{$\raisebox{5pt}{\scriptsize $f\big(\bullet(\bullet\bullet)\big)=\ $}\painteduuuucutmod$}}
\put(-10.00,90.00){\makebox(0.00,0.00)[r]
{$\raisebox{5pt}{\scriptsize \scriptsize $f\big((\bullet\bullet)\bullet\big)=\ $}\painteduuuucut$}}
\put(30.00,170.00){\makebox(0.00,0.00)[rb]{$\raisebox{5pt}{\scriptsize $f(\bullet\bullet)f(\bullet)=\ $}\painteduuuuacutmmo$}}
\put(170.00,170.00){\makebox(0.00,0.00)[lb]
{$\paintedffffcutred\raisebox{5pt}{\scriptsize $\ =\big(f(\bullet)f(\bullet)\big)f(\bullet)$}$}}
\put(210.00,90.00){\makebox(0.00,0.00)[l]
{$\paintedffffcutmodred\raisebox{5pt}{\scriptsize $\ =f(\bullet)\big(f(\bullet)f(\bullet)\big)$}$}}
\put(100.00,90.00){\makebox(0.00,0.00){$\paindedascicutmodvetsi$}}
\put(160.00,10.00){\makebox(0.00,0.00){\large$\bullet$}}
\put(40.00,10.00){\makebox(0.00,0.00){\large$\bullet$}}
\put(0.00,90.00){\makebox(0.00,0.00){\large$\bullet$}}
\put(40.00,170.00){\makebox(0.00,0.00){\large$\bullet$}}
\put(160.00,170.00){\makebox(0.00,0.00){\large$\bullet$}}
\put(200.00,90.00){\makebox(0.00,0.00){\large$\bullet$}}
\thicklines
\put(160.00,10.00){\line(-1,0){10.00}}
\put(200.00,90.00){\line(-1,-2){40.00}}
\put(160.00,170.00){\line(1,-2){40.00}}
\put(150.00,170.00){\line(1,0){10.00}}
\put(40.00,170.00){\line(1,0){110.00}}
\put(40.00,10.00){\line(1,0){110.00}}
\put(0.00,90.00){\line(1,-2){40.00}}
\put(40.00,170.00){\line(-1,-2){40.00}}
\end{picture}}
\caption{
\label{fig:psano_v_Srni}
The faces of the multiplihedron $J_3$ indexed by the set $\pT_3$ of
painted $3$-trees. The
labels of vertices in terms of bracketings of $3$ variables and an
operation $f$ are also shown.}
\end{figure}
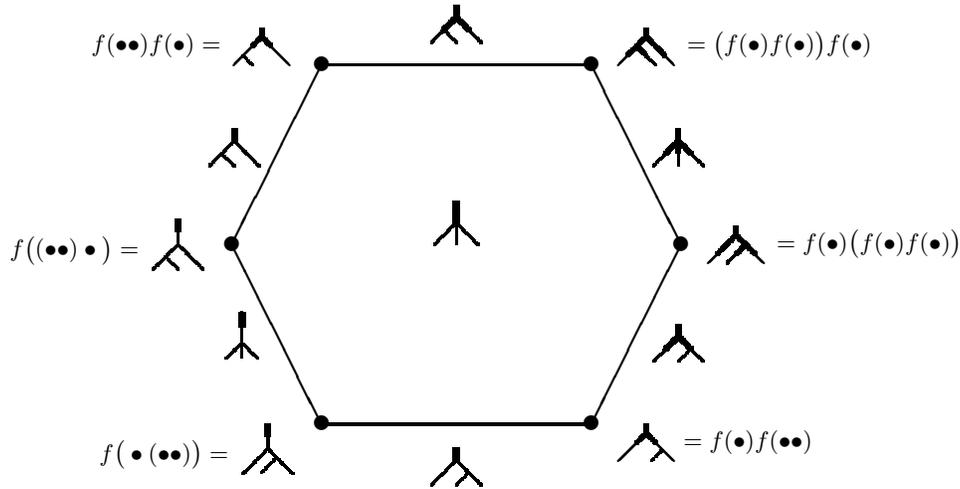 

\begin{propositionD}
\label{dnes_s_TK_Holter}
The face poset $\EK^2_\me$ of the biassociahedron $K^2_\me$ is isomorphic to the
face poset $\J_\me$ of the multiplihedron $J_\me$, for each $\me \geq 2$.
\end{propositionD}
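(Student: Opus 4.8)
The plan is to combine Proposition~\ref{Zitra_budu_chairmanem}, which gives $\EK^2_\me = (\zT^2_\me,<) \cong (\bT^2_\me,<)$, with an explicit poset isomorphism $(\bT^2_\me,<) \cong \J_\me = (\pT_\me,<)$ between trees with a diaphragm and painted $\me$-trees. So the whole content is to set up that last isomorphism; the reduction itself is immediate.

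First I would record two elementary structural facts about a painted tree $P$. Along any leaf-to-root path the edge colours pass monotonically from white to black, so the type~(ii) vertices of $P$ form an antichain; and a type~(ii) vertex with exactly one input edge (a \emph{bivalent} one) can be contracted without creating a new bivalent vertex, since its input comes from a white type~(i) vertex or a leaf and its output goes to a black type~(i) vertex or the root. Using this I would define $\Psi : \pT_\me \to \bT^2_\me$ by contracting all bivalent type~(ii) vertices of $P$ --- the resulting planar rooted tree $U$ then has every vertex with at least two inputs --- and setting $\zeta(v) := (-\infty,1)$, $1$, or $(1,+\infty)$ according as $v$ is a white type~(i), a remaining type~(ii), or a black type~(i) vertex of $U$. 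That $\zeta$ is order-preserving follows from the monotonicity of the colouring along paths, and it is strictly order-preserving at $1$ precisely because the type~(ii) vertices form an antichain; hence $(U,\zeta)\in\bT^2_\me$.

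Next I would construct the inverse $\Phi : \bT^2_\me \to \pT_\me$. Given $(U,\zeta)$, colour an edge of $U$ white if it lies at or below the diaphragm and black otherwise, declare every vertex of $\vert_1(U)$ to be of type~(ii), and then repair the three places where this would be inconsistent with the definition of a painted tree by inserting a bivalent type~(ii) vertex: on each internal edge joining a vertex of $\vert_{<1}(U)$ to a vertex of $\vert_{>1}(U)$, on each leaf leg attached to a vertex of $\vert_{>1}(U)$ (so that leaf legs stay white), and on the root leg when the root vertex lies in $\vert_{<1}(U)$ (so that the root leg becomes black). A short case check over the possible $\zeta$-values of the two endpoints of an edge shows that $\Phi(U,\zeta)$ is a genuine painted $\me$-tree. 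One then verifies $\Psi\circ\Phi=\mathrm{id}$ (the inserted vertices are exactly the bivalent ones, which $\Psi$ removes) and $\Phi\circ\Psi=\mathrm{id}$ (after contracting the bivalent type~(ii) vertices of $P$, the edges across which $\Phi$ re-inserts them are precisely the ones just vacated). Finally I would check that $\Phi$ is order-preserving in both directions: a morphism of trees with a diaphragm contracts edges of $U$ and respects the closures $\vert_{\le 1}$, $\vert_{\ge 1}$, $\vert_1$, and under $\Phi$ this corresponds bijectively to a contraction of edges of the associated painted trees preserving colours and vertex types, i.e.\ a morphism in $\pT_\me$; conversely for $\Psi$. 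Together with Proposition~\ref{Zitra_budu_chairmanem} this yields $\EK^2_\me \cong \J_\me$ for every $\me\ge 2$.

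The routine but genuinely fiddly part --- and the main obstacle --- is the bookkeeping of the bivalent type~(ii) vertices: one must handle leaf legs and the root leg under the fixed convention that leaf legs are white and the root leg is black, verify that the insertions made by $\Phi$ never produce two comparable type~(ii) vertices, and confirm that $\Phi$ and $\Psi$ are mutually inverse and order-preserving at the level of morphisms, not merely on underlying sets. A possible shortcut, following the exercise suggested just before the proposition, is to describe the restriction of $\varpi$ to $\zT^2_\me$ in terms of trees with a diaphragm and identify the image with the usual generators-and-relations presentation of the cellular chains of $J_\me$; but the elementary bijection above seems the cleanest route and stays closest to the combinatorial spirit of the paper.
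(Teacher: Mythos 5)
Your proposal is correct and follows essentially the same route as the paper: reduce via Proposition~\ref{Zitra_budu_chairmanem} to an isomorphism $(\bT^2_\me,<)\cong(\pT_\me,<)$, obtained by painting everything above the diaphragm black and everything below white and inserting a bivalent type~(ii) vertex wherever the diaphragm crosses an edge. The paper states this map and declares the verification obvious, whereas you spell out the inverse (contraction of bivalent type~(ii) vertices) and the order-preservation checks; the extra detail is welcome but not a different argument.
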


\begin{proof}
By Proposition~\ref{Zitra_budu_chairmanem}, it suffices to prove that
the posets $(\bT_\me,<)$ and $(\pT_\me,<)$ are isomorphic. It is very
simple. Having a tree $U$ with a diaphragm, we paint everything that
lies above\footnote{We keep our convention that all
edges are oriented to point upwards.} the diaphragm black, and everything below
white. If the diaphragm intersects an edge of $U$, we introduce at the
intersection a new vertex of type (ii) with one input edge. The result
will obviously be a painted tree belonging to $\pT_\me$. The isomorphism we have
thus described clearly preserves the partial orders.
\end{proof}

The correspondence of Proposition~D is, for $\me=3$, illustrated by
the two rightmost columns of the table in Figure~\ref{celeste}.

\begin{Remark}
S.~Forcey in~\cite{forcey:hull} constructed an explicit
realization of the poset $\J_\me$ by the face poset of a convex
polyhedron. This, combined with Proposition~D
proves, independently of~\cite{sanenlidze-umble:HHA11}, that $\EK^2_\me$
is the face poset of a convex polyhedron, too.
\end{Remark}

\appendix
\section{Calculus of fractions}

A PROP in the monoidal category of sets
is a sequence of
sets $\sfP = \big\{\sfP \Fuk \en\me\big\}_{m,n \geq 1}$ with compatible left
$\Sigma_\me$- right $\Sigma_\en$-actions and two types of equivariant
compositions, vertical:
\[
\circ: 
\sfP \Fuk \en u \times_{\Sigma_u} \sfP \Fuk u\me \to \sfP \Fuk \en\me, \ m,n,u \geq 1,
\] 
and horizontal:
\[
\boxtimes : \sfP \Fuk {\en_1}{\me_1} \times  \sfP \Fuk{\en_2}{\me_2} \to 
\sfP\Fuk {\en_1+\en_2}{\me_1+\me_2},\
m_1,m_2,n_1,n_2 \geq 1,
\]
together with an identity $e \in \sfP \Fuk11$, satisfying appropriate
axioms~\cite{maclane:RiceUniv.Studies63,markl:handbook}.  One can
imagine elements of $\sfP\Fuk \en\me$ as `abstract' operations with $\me$ inputs
and $\en$ outputs.  We say that $X$ has {\em biarity\/} $\Fuk \en\me$ if $X
\in \sfP \Fuk \en\me$.

Calculus of fractions was devised in \cite[Section~4]{markl:ba} 
to handle particular
types of compositions in PROPs.
For $k ,l \geq 1$ and $1 \leq i \leq kl$, let $\sigma\Fuk lk \in
\Sigma_{kl}$ be the permutation given~by
\[
\sigma\Fuk lk(i) := l(i-1 - (s-1)k) + s,
\]
where $s$ is such that $(s-1)k < i \leq sk$.

\begin{Example}
We have
\[
\sigma \Fuk22 = 
\left(
\begin{array}{cccc}
1 & 2 & 3 & 4
\\
1 & 3 & 2 & 4
\end{array}
\right)
= \hskip 2mm
{
\unitlength=1.000000pt
\begin{picture}(30.00,30.00)(0.00,12.00)
\put(30.00,0.00){\makebox(0.00,0.00){\scriptsize$\bullet$}}
\put(30.00,30.00){\makebox(0.00,0.00){\scriptsize$\bullet$}}
\put(20.00,0.00){\makebox(0.00,0.00){\scriptsize$\bullet$}}
\put(20.00,30.00){\makebox(0.00,0.00){\scriptsize$\bullet$}}
\put(10.00,0.00){\makebox(0.00,0.00){\scriptsize$\bullet$}}
\put(10.00,30.00){\makebox(0.00,0.00){\scriptsize$\bullet$}}
\put(0.00,0.00){\makebox(0.00,0.00){\scriptsize$\bullet$}}
\put(0.00,30.00){\makebox(0.00,0.00){\scriptsize$\bullet$}}
\put(30.00,20.00){\line(0,-1){10.00}}
\put(20.00,20.00){\line(-1,-1){10.00}}
\put(10.00,20.00){\line(1,-1){10.00}}
\put(0.00,20.00){\line(0,-1){10.00}}
\end{picture}}\hskip .5em \in \Sigma_4 .
\]
Similarly
\[
\sigma\Fuk32 := 
\left(
\begin{array}{cccccc}
1 & 2 & 3 & 4 & 5 & 6
\\
1 & 4 & 2 & 5 & 3 & 6
\end{array}
\right)
= \hskip 3mm
{
\unitlength=1.000000pt
\begin{picture}(50.00,30.00)(0.00,12.00)
\put(50.00,0.00){\makebox(0.00,0.00){\scriptsize$\bullet$}}
\put(50.00,30.00){\makebox(0.00,0.00){\scriptsize$\bullet$}}
\put(40.00,0.00){\makebox(0.00,0.00){\scriptsize$\bullet$}}
\put(40.00,30.00){\makebox(0.00,0.00){\scriptsize$\bullet$}}
\put(30.00,0.00){\makebox(0.00,0.00){\scriptsize$\bullet$}}
\put(30.00,30.00){\makebox(0.00,0.00){\scriptsize$\bullet$}}
\put(20.00,0.00){\makebox(0.00,0.00){\scriptsize$\bullet$}}
\put(20.00,30.00){\makebox(0.00,0.00){\scriptsize$\bullet$}}
\put(10.00,0.00){\makebox(0.00,0.00){\scriptsize$\bullet$}}
\put(10.00,30.00){\makebox(0.00,0.00){\scriptsize$\bullet$}}
\put(0.00,0.00){\makebox(0.00,0.00){\scriptsize$\bullet$}}
\put(0.00,30.00){\makebox(0.00,0.00){\scriptsize$\bullet$}}
\put(0.00,30.00){\line(0,1){0.00}}
\put(50.00,20.00){\line(0,-1){10.00}}
\put(40.00,20.00){\line(-1,-1){10.00}}
\put(30.00,20.00){\line(-2,-1){20.00}}
\put(20.00,20.00){\line(2,-1){20.00}}
\put(10.00,20.00){\line(1,-1){10.00}}
\put(0.00,20.00){\line(0,-1){10.00}}
\end{picture}} \hskip 2mm \in \Sigma_6.
\]
\end{Example}

\begin{definition}
\label{3}
Let $\sfP$ be an arbitrary PROP. 
Let $k,l \geq 1$, $\Rada a1l,\Rada b1k \geq 1$,  $\Rada A1l \in
\sfP\Fuk k{a_j}$ and 
$\Rada B1k \in \sfP \Fuk {b_i}l$.
Then define the {\em fraction\/}
\[
\frac{B_1 \cdots B_k}{A_1 \cdots A_l} :=
(B_1 \boxtimes \cdots \boxtimes B_k) 
\circ \sigma \Fuk lk \circ 
(A_1 \boxtimes \cdots \boxtimes A_l)
\in \sfP\Fuk{b_1+\cdots + b_k}{a_1+\cdots + a_l}.
\]
\end{definition}

\begin{Example}
If $k=1$ or $l=1$, the fractions give the `operadic' and `cooperadic'
compositions:
\[
\frac{B_1}{A_1  \cdots  A_l} = B_1 \circ (A_1
\boxtimes \cdots \boxtimes A_l)
\ \mbox { and }\
\frac{B_1 \cdots B_k}{A_1} = (B_1 \boxtimes \cdots \boxtimes B_k) \circ A_1.
\]
\end{Example}

\begin{Example}
For $\zeroatwo a, \zeroatwo b \in \sfP\FuK(*,2)$ and $\twoazero c,
\twoazero d \in \sfP\FuK(2,*)$,
\[
\frac{\zeroatwo a \hskip .2em  \zeroatwo b}%
     {\twoazero c \hskip .2em \twoazero d}
=
(\raisebox{-2pt}{\zeroatwo a} \hskip -.3em 
\lboxtimes \raisebox{-2pt}{\zeroatwo b} 
\hskip -4pt) 
\circ \sigma\FuK(2,2) \circ 
(\raisebox{-2pt}{\twoazero c} 
\hskip -.3em\lboxtimes \raisebox{-2pt}{\twoazero d} \hskip -4pt)
=
{
\unitlength=.5pt
\begin{picture}(70.00,70.00)(0.00,30.00)
\put(50.00,10.00){\makebox(0.00,0.00)[l]{$d$}}
\put(10.00,10.00){\makebox(0.00,0.00)[l]{$c$}}
\put(50.00,60.00){\makebox(0.00,0.00)[l]{$b$}}
\put(10.00,60.00){\makebox(0.00,0.00)[l]{$a$}}
\put(60.00,50.00){\line(0,-1){30.00}}
\put(40.00,0.00){\line(0,1){20.00}}
\put(70.00,0.00){\line(-1,0){30.00}}
\put(70.00,20.00){\line(0,-1){20.00}}
\put(40.00,20.00){\line(1,0){30.00}}
\put(40.00,70.00){\line(0,-1){20.00}}
\put(70.00,70.00){\line(-1,0){30.00}}
\put(70.00,50.00){\line(0,1){20.00}}
\put(40.00,50.00){\line(1,0){30.00}}
\put(50.00,50.00){\line(-1,-1){30.00}}
\put(20.00,50.00){\line(1,-1){30.00}}
\put(0.00,0.00){\line(0,1){20.00}}
\put(30.00,0.00){\line(-1,0){30.00}}
\put(30.00,20.00){\line(0,-1){20.00}}
\put(0.00,20.00){\line(1,0){30.00}}
\put(10.00,40.00){\line(0,-1){20.00}}
\put(10.00,50.00){\line(0,-1){10.00}}
\put(30.00,70.00){\line(-1,0){30.00}}
\put(30.00,50.00){\line(0,1){20.00}}
\put(0.00,50.00){\line(1,0){30.00}}
\put(0.00,70.00){\line(0,-1){20.00}} 
\end{picture}}\hskip 2mm.
\]
Similarly, for $\zeroathree x, \zeroathree y \in \sfP\FuK(*,3)$ and
$\twoazero z, \twoazero u, \twoazero v \in \sfP\FuK(2,*)$,

\vglue -1.3em
\[
\frac{\zeroathree x \hskip 4mm \zeroathree y}%
     {\twoazero z \hskip 1mm \twoazero u \hskip 1mm \twoazero v} 
=
(\hskip -2pt \raisebox{-2pt}{\zeroathree x}\hskip -.2em \lboxtimes\hskip -.2em
\raisebox{-2pt}{\zeroathree y}\hskip -2pt) 
\circ \sigma\FuK(3,2) \circ
(\raisebox{-2pt}{\twoazero z}\hskip -.2em 
\lboxtimes  \raisebox{-2pt}{\twoazero u}  \hskip -.2em
\lboxtimes \raisebox{-2pt}{\twoazero v}\hskip -2pt) 
=
{
\unitlength=.5pt
\begin{picture}(110.00,70.00)(0.00,30.00)
\put(90.00,10.00){\makebox(0.00,0.00)[l]{$v$}}
\put(50.00,10.00){\makebox(0.00,0.00)[l]{$u$}}
\put(10.00,10.00){\makebox(0.00,0.00)[l]{$z$}}
\put(90.00,60.00){\makebox(0.00,0.00){$y$}}
\put(20.00,60.00){\makebox(0.00,0.00){$x$}}
\put(110.00,20.00){\line(-1,0){30.00}}
\put(110.00,0.00){\line(0,1){20.00}}
\put(80.00,0.00){\line(1,0){30.00}}
\put(80.00,20.00){\line(0,-1){20.00}}
\put(40.00,0.00){\line(0,1){20.00}}
\put(70.00,0.00){\line(-1,0){30.00}}
\put(70.00,20.00){\line(0,-1){20.00}}
\put(40.00,20.00){\line(1,0){30.00}}
\put(0.00,0.00){\line(0,1){20.00}}
\put(30.00,0.00){\line(-1,0){30.00}}
\put(30.00,20.00){\line(0,-1){20.00}}
\put(0.00,20.00){\line(1,0){30.00}}
\put(60.00,20.00){\line(1,1){30.00}}
\put(20.00,20.00){\line(2,1){60.00}}
\put(30.00,50.00){\line(2,-1){60.00}}
\put(20.00,50.00){\line(1,-1){30.00}}
\put(100.00,50.00){\line(0,-1){30.00}}
\put(70.00,70.00){\line(0,-1){20.00}}
\put(110.00,70.00){\line(-1,0){40.00}}
\put(110.00,50.00){\line(0,1){20.00}}
\put(70.00,50.00){\line(1,0){40.00}}
\put(0.00,70.00){\line(0,-1){20.00}}
\put(40.00,70.00){\line(-1,0){40.00}}
\put(40.00,50.00){\line(0,1){20.00}}
\put(0.00,50.00){\line(1,0){40.00}}
\put(10.00,50.00){\line(0,-1){30.00}}
\end{picture}}\hskip 2pt.
\]
\end{Example}


\begin{thebibliography}{1}

\bibitem{forcey:hull}
S.~Forcey.
\newblock Convex hull realizations of the multiplihedra.
\newblock {\em Topology Appl.}, 156(2):326--347, 2008.


\bibitem{maclane:RiceUniv.Studies63}
S.~Mac~Lane.
\newblock Natural associativity and commutativity.
\newblock {\em Rice Univ. Studies}, 49(4):28--46, 1963.

\bibitem{markl:ba}
M.~Markl.
\newblock A resolution (minimal model) of the {PROP} for bialgebras.
\newblock {\em J. Pure Appl. Algebra}, 205(2):341--374, 2006.

\bibitem{markl:ha}
M.~Markl.
\newblock Homotopy algebras are homotopy algebras.
\newblock {\em Forum Math.}, 16(1):129--160, 2004.

\bibitem{markl:zebrulka}
M.~Markl.
\newblock Models for operads.
\newblock {\em Comm. Algebra}, 24(4):1471--1500, 1996.

\bibitem{markl:handbook}
M.~Markl.
\newblock Operads and {PROP}s.
\newblock In {\em Handbook of algebra. {V}ol. 5}, 
pages 87--140. Elsevier/North-Holland, Amsterdam, 2008.

\bibitem{markl-shnider-stasheff:book}
M.~Markl, S.~Shnider, and J.D. Stasheff.
\newblock {\em Operads in algebra, topology and physics}, volume~96 of {\em
  Mathematical Surveys and Monographs}.
\newblock American Mathematical Society, Providence, RI, 2002.

\bibitem{sanenlidze-umble:HHA11}
S.~Saneblidze and R.~Umble.
\newblock Matrads, biassociahedra, and {$A_\infty$}-bialgebras.
\newblock {\em Homology, Homotopy Appl.}, 13(1):1--57, 2011.

\bibitem{stasheff:TAMS63}
J.D. Stasheff.
\newblock Homotopy associativity of {$H$}-spaces. {I}, {II}.
\newblock {\em Trans. Amer. Math. Soc. 108 (1963), 275-292; ibid.},
  108:293--312, 1963.


\bibitem{jim:book}
J.D. Stasheff.
\newblock {\em H-spaces from a homotopy point of view}, volume 161 of {\em
  Lecture Notes in Math.}
\newblock Springer-Verlag, 1970.


\bibitem{tonks97}
A.~Tonks.
\newblock Relating the associahedron and the permutohedron.
\newblock In J.-L. Loday, J.D. Stasheff, and A.A.~Voronov, editors, {\em Operads:
  Proceedings of Renaissance Conferences}, volume 202 of {\em Contemporary
  Math.}, pages 33--36. Am. Math. Soc., 1997.

\end{thebibliography}

\def\cprime{$'$} \def\cprime{$'$}

\end{document}